\numberwithin{equation}{section}
\declaretheorem[
	name=Theorem,
	numberwithin=section
	]{thm}
\declaretheorem[
	name=Lemma,
	sibling=thm,
	]{lem}
\declaretheorem[
	name=Proposition,
	sibling=thm,
	]{prop}
\declaretheorem[
	name=Corollary,
	sibling=thm,
	]{cor}
\declaretheorem[
	name=Remark,
	style=remark,
	sibling=thm,
	]{rmk}	
\declaretheorem[
	name=Example,
	style=remark,
	sibling=thm,
	]{exam}
\declaretheorem[
	name=Assumption,
	style=definition,
	sibling=thm,
	]{assum}
\declaretheorem[
	name=Main Theorem,
	numbered=no
]{mthm}
\newcommand{\N}{\mathbb{N}}
\newcommand*{\Z}{\mathbb{Z}} 
\newcommand{\R}{\mathbb{R}}
\newcommand*{\C}{\mathbb{C}}
\newcommand*{\K}{\mathbb{K}}
\newcommand*{\A}{\mathbb{A}}
\newcommand{\id}{\operatorname{id}} 
\newcommand{\ad}{\operatorname{ad}}
\newcommand{\Ad}{\operatorname{Ad}}
\newcommand{\conv}{\operatorname{conv}}
\newcommand{\SL}{\operatorname{SL}} 
\newcommand{\SO}{\operatorname{SO}}
\newcommand{\GL}{\operatorname{GL}}
\newcommand{\g}{\mathfrak{g}} 
\newcommand{\h}{\mathfrak{h}}
\renewcommand{\k}{\mathfrak{k}}
\renewcommand{\a}{\mathfrak{a}}
\newcommand{\p}{\mathfrak{p}}
\newcommand{\mc}[1]{\mathcal{#1}}
\newcommand{\mf}[1]{\mathfrak{#1}}
\newcommand{\op}[1]{\operatorname{#1}}
\newcommand{\Addresses}{
		\bigskip
		\footnotesize
		
		\noindent Paul Zellhofer, \textsc{Mathematisches Seminar, Christian-Albrechts-Universität zu Kiel,
			Heinrich-Hecht-Platz 6, 24118 Kiel} \\
		E-mail address: \texttt{zellhofer@math.uni-kiel.de}
		
		\medskip
		
		\noindent Ralf Köhl, \textsc{Mathematisches Seminar, Christian-Albrechts-Universität zu Kiel,
			Heinrich-Hecht-Platz 6, 24118 Kiel} \\
		E-mail address: \texttt{koehl@math.uni-kiel.de}
}
\title{On convexity and the Iwasawa decomposition of split real and complex Kac-Moody groups}
\date{January 27, 2024}
\author{Paul Zellhofer}
\author{Ralf Köhl}
\affil{Christian-Albrechts-Universität zu Kiel}
\begin{document}
	\maketitle
	
\begin{abstract}
	\noindent We prove an analogue of Kostant's convexity theorem for split real and complex Kac-Moody groups associated to free and cofree root data. The result can be seen as a first step towards describing the multiplication map in a Kac-Moody group in terms of Iwasawa coordinates. Our method involves a detailed analysis of the geometry of Weyl group orbits in the Cartan subalgebra of a real Kac-Moody algebra. It provides an alternative proof of Kostant convexity for semisimple Lie groups and also generalizes a linear analogue of Kostant's theorem for Kac-Moody algebras that has been established by Kac and Peterson in 1984.
\end{abstract}
	
\section*{Introduction}

\textit{Kostant convexity} is a fundamental property of semisimple Lie groups that has been established by B. Kostant in \cite{Kostant}. The origins of Kostant's theorem are related to the following problem. Let $G$ be a connected semisimple real Lie group with Lie algebra $\g$ and $\g=\k\oplus\p$ a Cartan decomposition of $\g$, where $\k$ is the fixed-point set of a Cartan involution $\theta:\g\to\g$ and $\p$ is the $(-1)$-eigenspace of $\theta$. If $\a$ is a maximal abelian subspace of $\p$, then the adjoint action of $\a$ on $\g$ is diagonalizable and the set $\Phi\subset\a^\ast$ of non-zero eigenvalues are the roots of $(\g,\a)$. It is an abstract root system in $\a^\ast$ whose Weyl group $W$ is finite. Denote by $\mf{u}_+$ the subalgebra of $\g$ generated by all positive root spaces, then $\g=\k\oplus\a\oplus\mf{u}_+$ as vector spaces. This is known as the \textit{Iwasawa decomposition} of $\g$. It is a fundamental result in the theory of Lie groups that the Cartan involution $\theta$ lifts to an involutive automorphism $\Theta:G\to G$ with differential $\theta$. Its fixed-point set $K$ is a maximal compact subgroup of $G$ (if the center of $G$ is finite) with Lie algebra $\k$ which contains all topological information about $G$ in the sense that the quotient space $G/K$ is contractible. In fact, $G/K$ is a simply connected Riemannian symmetric space of non-compact type. The exponential map $\exp:\g\to G$ is injective on $\a$ and $\mf{u}_+$ and their exponential images $A$ and $U_+$ are contractible subgroups of $G$. Here, $A$ is abelian and $U_+$ is unipotent in the sense that its Lie algebra $\mf{u}_+$ is nilpotent. The multiplication map $K\times A\times U_+\to G$ is a diffeomorphism which is called the \textit{Iwasawa decomposition} of $G$. \\
However, this map is not a homomorphism of groups which raises the question whether it is possible to describe the multiplication in $G$ in Iwasawa coordinates. More precisely, given two elements $g_1=k_1a_1u_1$ and $g_2=k_2a_2u_2$ of $G$, is it possible to compute the Iwasawa coordinates of the product $g_1g_2$ from the ones of $g_1$ and $g_2$? The major difficulty in this problem is the fact that $K$ does not normalize $A$ and $U_+$. A simpler question is to focus on one of the components and restrict $g_1$ and $g_2$ to certain subgroups of $G$. In applications, one is often interested in the $A$-component and we denote by $\Pi:G\to A,\,g=kau\mapsto a$ the corresponding projection. It is clear that $\Pi(kg)=\Pi(g)$ holds for all $k\in K$ and $g\in G$, but there is no obvious way how to compute $\Pi(gk)$. Using the Cartan decomposition $G=KAK$, this problem can be reduced to the case $g=a\in A$. \textit{Kostant's convexity theorem} asserts that if $a\in A$ and $k\in K$ are arbitrary, then $\Pi(ak)$ is contained in the (multiplicative) convex hull of the Weyl group orbit of $a$ and, as $k$ runs through all of $K$, every point of this convex hull is attained in this way. More precisely, \cite[Theorem 4.1]{Kostant} states that
	\begin{equation}
		\{\Pi(ak) \mid k\in K\}=\exp(\conv(W\cdot \log(a))). \label{ClassicalKostant}
	\end{equation}
\textit{Kac-Moody groups} are a natural generalization of semisimple Lie groups and inherit many of their properties, often by construction. It has thus been conjectured that a suitable analogue of Kostant's theorem might be true in the Kac-Moody setting as well. This question is also related to the problem of determining whether the causal pre-order on Kac-Moody symmetric spaces is a partial order or trivial (cf. \cite[Proposition 7.46]{FHHK}). In general, however, Kac-Moody groups carry no differentiable structure and an exponential map between a Kac-Moody algebra and group cannot be globally defined, so it is far more difficult to ``linearize'' problems about Kac-Moody groups. 
\medskip \newline
In this article we prove an analogue of Kostant's convexity theorem for certain classes of split real and complex Kac-Moody groups. The result is formulated as follows. Let $\A$ be a symmetrizable generalized Cartan matrix with Weyl group $W$ and let $\mc{D}$ be a free and cofree Kac-Moody root datum associated to $\A$. If $\K\in\{\R,\C\}$, the split minimal Kac-Moody group $G$ over $\K$ of type $\mc{D}$ has an Iwasawa decomposition $G=KAU_+$ generalizing the classical decomposition of a semisimple Lie group. Here, $K$ is the fixed-point set of an involutive automorphism of $G$, $A$ is the exponential image of the Cartan subalgebra $\a$ of the real Kac-Moody algebra of type $\mc{D}$ and $U_+$ is the subgroup of $G$ generated by all positive root groups. The Weyl group $W$ acts on $\a$ and $A$ and the exponential map $\exp:\a\to A$ is a $W$-equivariant group isomorphism. The subset of $\a$ where all simple roots are non-negative is called the fundamental chamber and the union $X$ of all its $W$-translates, called Weyl chambers, is the Tits cone of $\a$. In Theorem \ref{thm:KMKostant} we prove the following result.

\begin{mthm}
	Let $G$ be a free and cofree, split, minimal, real or complex Kac-Moody group, $G=KAU_+$ an Iwasawa decomposition of $G$ and $\Pi:G\to A$ the associated projection. Let $W$ denote the Weyl group of $\A$. If $a\in A$ is contained in the exponential image of an open chamber of the Tits cone $X\subset\a$, then
		$$\{\Pi(ak)\mid k\in K\}=\exp(\conv(W\cdot \log(a))).$$
\end{mthm}

\noindent If $\A$ is the Cartan matrix of a finite-dimensional complex semisimple Lie algebra, then $G$ is a reductive Lie group and the result is very similar to Kostant's original theorem. In this case, the Weyl group $W$ is finite and $X=\a$. In the general case, however, $W$ is typically infinite and $X$ is a proper subset of $\a$. Thus, it is a priori not clear whether the set $\conv(W\cdot \log(a))\subset\a$ is well-behaved in any sense. A major part of the proof of Theorem \ref{thm:KMKostant} consists of understanding the structure of this set. Our approach is inspired by the methods of \cite{Kostant}. To obtain equality between the sets in \eqref{ClassicalKostant}, Kostant uses tools from the representation theory of $G$ and an inductive argument. We proceed similarly, but our proof avoids tools that are particular to Lie theory and instead lays more emphasis on combinatorial properties of the Weyl group $W$, which is a (generally infinite) Coxeter group with respect to a finite generating set $S$. In particular, we relate the facial structure of $\conv(W\cdot \log(a))$ to the Coxeter complex of $(W,S)$ and present an inductive argument that allows to compute the Iwasawa coordinates of an element of the form $ak_1k_2$ with $k_1,k_2\in K$ by iteration. In the Lie group case, our methods can even be extended in a suitable way to provide a new proof of Kostant's original result. However, Kac-Moody groups usually do not admit a Cartan decomposition $G=KAK$, so our result does not yield a description of $\{\Pi(gk) \mid k\in K\}$ for an arbitrary $g\in G$. Another consequence of Kostant's result is that a semisimple Lie group satisfies $G=KU_+K$. As an application of Theorem \ref{thm:KMKostant}, we establish in Corollary \ref{cor:HoroDecomp} that this decomposition is not satisfied if $\A$ is not of finite type. \\ 
Furthermore, if $\A$ is of affine type, then $G$ is an example of a group with an affine $BN$-pair. Such groups are always endowed with a different variant of the Iwasawa decomposition and act on an affine building which can be viewed as a discrete analogue of a symmetric space. Kostant's convexity theorem has been transferred to the setting of affine buildings by P. Schwer\footnote{née Hitzelberger} in \cite{Schwer1} and generalized in \cite{Schwer2}. Hence, real and complex Kac-Moody groups of affine type can be endowed with two different Iwasawa decompositions that both satisfy an analogue of Kostant's convexity theorem. The result of Schwer extends far beyond this setting as it is formulated entirely in the language of buildings and does not require any group action to begin with. In addition, it also applies to groups with an affine $BN$-pair outside the Kac-Moody realm such as semisimple algebraic groups over non-Archimedean local fields.
\medskip \newline
The text is organized as follows. We begin in Chapter \ref{KMAlgebras} by reviewing some basic concepts from the theory of Kac-Moody algebras and Coxeter groups. We fix the notation that will be used throughout our discussion and introduce some standing assumptions that will be necessary in the proof of Theorem \ref{thm:KMKostant}. In Chapter \ref{ConvexHulls} we study the geometry of convex hulls of Weyl group orbits in $\a$ and obtain results that lie at the heart of the proof of our main theorem. For this, we require some notions from convex geometry that we collect in Section \ref{ConvexGeom}. Chapter \ref{KMGroups} explains the fundamental properties of Kac-Moody groups that are used in Section \ref{MainTheoremProof} where we prove Theorem \ref{thm:KMKostant}. Afterwards, we formulate a ``linear'' analogue of our main result in Section \ref{LinearAnalogue} which is closely related to a convexity theorem for Kac-Moody algebras that was established by V.G. Kac and D.H. Peterson in \cite{KP2}. We close our discussion in Section \ref{ConcludingRemarks} by comparing Theorem \ref{thm:KMKostant} to Kostant's original result and mentioning some possible generalizations and open problems.
\medskip \newline
The present article is part of the first author's PhD-project which is supported financially by the \textit{Studienstiftung des deutschen Volkes}. He would also like to thank the research group \textit{Algebra} of CAU Kiel for many valuable contributions both inside and outside of mathematics. Both authors thank Timothée Marquis for various remarks on a preliminary version of this article.
\section{Kac-Moody algebras} \label{KMAlgebras}

We begin by collecting the most important properties of Kac-Moody algebras and fix the setting in which we are going to work in the following chapters. We assume that the reader is familiar with the basic theory of these algebras and standard terminology in the subject. More details, definitions and proofs of all assertions in this chapter can be found in \cite[Chapter 1-5, 9, 11]{Kac} and \cite[Chapter 3-7]{Marquis}. Our notation and conventions mostly follow \cite{Marquis}. We also freely use standard terminology from the theory of Coxeter groups (cf. \cite[Chapter 2-3]{AB} or \cite[Chapter 1-2]{BB}). In addition, we employ the convention that we do not consider $0$ to be a natural number, i.e. $\N=\{1,2,3,\ldots\}$, and write $\N_0$ for the set $\{0,1,2,3,\ldots\}$ if that should be necessary. If $n\in\N$ is a natural number that is fixed by the context, we set $I:=\{1,2,\ldots,n\}$.
\medskip \newline
A \textit{generalized Cartan matrix} (``GCM'') is an integer matrix $\A=(a_{ij})_{i,j\in I}\in \Z^{n\times n}$ which satisfies the following three conditions for all $i,j\in I$: (i) $a_{ii}=2$, (ii) $a_{ij}\leq 0$ for $i\neq j$, (iii) $a_{ij}=0$ if and only if $a_{ji}=0$. It is said to be \textit{symmetrizable} if there exist a diagonal matrix $D$ and a symmetric matrix $B$ of size $n\times n$ such that $\A=DB$. It is called \textit{decomposable} if $I$ is the disjoint union $I=I_1\sqcup I_2$ of two non-empty proper subsets $I_1,I_2\subset I$ such that $a_{ij}=0$ for all $i\in I_1$ and $j\in I_2$. In this case, the submatrices $\A_1$ and $\A_2$ of $\A$ indexed by $I_1$ and $I_2$ are GCM's as well and we write $\A=\A_1\oplus\A_2$. Otherwise, $\A$ is said to be \textit{indecomposable}. There is a complete classification of indecomposable GCM's that is formulated as follows (cf. \cite[Theorem 4.3]{Kac}): For a vector $\lambda=(\lambda_1,\ldots,\lambda_n)^T\in\R^n$, we write $\lambda>0$ if $\lambda_i>0$ for all $i\in I$ and, likewise, we define $\lambda<0$, $\lambda\geq 0$ and $\lambda\leq 0$. If $\A$ is indecomposable, then exactly one of the following conditions holds. According to these three cases, we say that $\A$ is of \textit{finite}, \textit{affine} or \textit{indefinite type}. 
	\begin{itemize}
		\item[(Fin)] $\det(\A)\neq 0$; there exists $\lambda>0$ such that $\A\lambda>0$; $\A\lambda\geq 0$ implies $\lambda>0$ or $\lambda=0$ for all $\lambda\in\R^n$.
		\item[(Aff)] $\op{rank}(\A)=n-1$, there exists $\lambda>0$ such that $\A\lambda=0$; $\A\lambda\geq 0$ implies $\A\lambda=0$ for all $\lambda\in\R^n$.
		\item[(Ind)] There exists $\lambda>0$ such that $\A\lambda<0$; $\A\lambda\geq 0$ and $\lambda\geq 0$ imply $\lambda=0$ for all $\lambda\in\R^n$.
	\end{itemize}
A \textit{Kac-Moody root datum} is a quintuple $\mc{D}=(I,\A,\Lambda,(c_i)_{i\in I},(h_i)_{i\in I})$, where $I$ is a set indexing a GCM $\A$ of size $n=|I|$, $\Lambda$ is a free $\Z$-module of rank $d\in\N$, $(c_i)_{i\in I}$ is a family of elements of $\Lambda$ and $(h_i)_{i\in I}$ is a family in the dual $\Z$-module $\Lambda^\vee$ such that the natural pairing between $\Lambda$ and $\Lambda^\vee$ satisfies $\langle c_j,h_i \rangle=a_{ij}$ for all $i,j\in I$. We say that $\mc{D}$ is \textit{free} (resp. \textit{cofree}) if $(c_i)_{i\in I}\subset\Lambda$ (resp. $(h_i)_{i\in I}\subset\Lambda^\vee$) is linearly independent. Let $\K$ be a field. The \textit{Kac-Moody algebra of type $\mc{D}$ over $\K$} is the free Lie algebra $\g$ over $\K$ generated by $\h:=\Lambda^\vee\otimes_\Z \K$ and $2n$ symbols $\{e_i,f_i \mid i\in I\}$ subject to the following relations:
	\begin{equation}
	\begin{aligned} \label{DefAlgRelations}
		[h,h'] &= 0 \text{ for $h,h'\in\h$} \\
		[h,e_i] &= \langle c_i,h\rangle e_i \text{ and } [h,f_i]=-\langle c_i,h\rangle f_i \text{ for $h\in\h$ and $i\in I$} \\
		[e_i,f_j] &= -\delta_{ij}h_i \text{ for $i,j\in I$} \\
		\ad(e_i)^{1-a_{ij}}e_j &= \ad(f_i)^{1-a_{ij}}f_j=0 \text{ for $i,j\in I$ with $i\neq j$}
	\end{aligned}
\end{equation}
We call $\h$ the \textit{Cartan subalgebra} and $\{e_i,f_i \mid i\in I\}$ the \textit{Chevalley generators} of $\g$. The \textit{Chevalley involution} of $\g$ is the involutive $\K$-linear Lie algebra automorphism $\omega:\g\to\g$ defined by $\omega(e_i):=f_i$ and $\omega(f_i):=e_i$ for all $i\in I$ and $\omega(h):=-h$ for all $h\in\h$. Let $\Phi_0:=\{\alpha_i \mid i\in I\}$ and $\Phi_0^\vee:=\{\alpha_i^\vee \mid i\in I\}$ be two abstract sets and denote by
	$$Q:=\bigoplus_{i\in I} \Z\alpha_i \qquad \text{and} \qquad Q^\vee:=\bigoplus_{i\in I} \Z\alpha_i^\vee$$
the free abelian groups generated by them. Then $\g$ has a natural $Q$-gradation
	$$\g=\bigoplus_{\alpha\in Q} \g_\alpha$$
induced by the assignment $\op{deg}(e_i)=-\op{deg}(f_i):=\alpha_i$ for $i\in I$ and $\op{deg}(h):=0$ for $h\in\h$. The elements of $\Phi:=\{\alpha\in Q\setminus\{0\}:\g_\alpha\neq\{0\}\}$ are the \textit{roots} of $\g$, $\Phi_0\subset\Phi$ is the set of \textit{simple roots} and $\Phi_0^\vee$ is the set of \textit{simple coroots}. Every root is a $\Z$-linear combination of simple roots with coefficients of the same sign, which defines two sets $\Phi_+,\Phi_-\subset\Phi$, where
	$$\Phi_{\pm}:=\Phi\cap Q_{\pm} \qquad \text{and} \qquad Q_{\pm}:=\pm\bigoplus_{i\in I} \N_0\alpha_i,$$
that satisfy $\Phi_+=-\Phi_-$ and $\Phi=\Phi_+\sqcup\Phi_-$. We introduce a partial order on $Q$ by declaring $\alpha\leq\beta$ if and only if $\beta-\alpha\in Q_+$. For $\alpha=\sum_{i\in I} k_i\alpha_i\in Q$ with $k_1,\ldots,k_n\in\Z$, the \textit{support} and \textit{height} of $\alpha$ are defined by
	$$\op{supp}(\alpha):=\{i\in I: k_i \neq 0\} \qquad \text{and} \qquad \op{ht}(\alpha):=\sum_{i\in I} k_i$$
and one defines the same notions for elements of $Q^\vee$ in the obvious way. There exist canonical $\Z$-linear maps $c:Q\to\Lambda$ and $h:Q^\vee\to\Lambda^\vee$ that satisfy $c(\alpha_i)=c_i$ and $h(\alpha_i^\vee)=h_i$ for all $i\in I$. If $\mc{D}$ is free (resp. cofree), then $c$ (resp. $h$) is injective. The Cartan subalgebra $\h=\g_0\subset\g$ acts diagonally on $\g$ via the adjoint action and we have $\g_{\alpha}\subset \{x\in\g: [h,x]=\langle c(\alpha),h\rangle x\,\,\forall h\in\h\}$ for all $\alpha\in\Phi$, but this inclusion is strict if $\mc{D}$ is not free. Hence, the eigenspace decomposition for the adjoint action of $\h$ is usually coarser than the $Q$-gradation of $\g$.

\begin{rmk} \label{rmk.RDissues}
	It is immediate from the property $\langle c_j,h_i\rangle=a_{ij}$ for all $i,j\in I$ in the definition of a Kac-Moody root datum that if $\mc{D}$ is not cofree (resp. free), then the rows (resp. columns) of the GCM $\A$ are linearly dependent over $\Z$. Conversely, every $\Z$-linear dependence between the rows (resp. columns) of $\A$ can be used to construct a root datum of rank $n$ that is not cofree (resp. free). Hence, a GCM is invertible if and only if all associated root data are free and cofree. In the non-invertible case, there always exist Kac-Moody root data of rank $d<n$ which are therefore neither free nor cofree. \\
	If $\A$ is arbitrary, there exists a Kac-Moody root datum $\mc{D}^{\A}_{\text{Kac}}$ that is free and cofree such that $d=2n-\op{rank}(\A)$ (cf. \cite[Example 7.10]{Marquis}). It is minimal with these properties in the sense that every free and cofree root datum has to satisfy $d\geq 2n-\op{rank}(\A)$. The Kac-Moody algebra of type $\mc{D}^{\A}_{\text{Kac}}$ over $\C$ coincides with the algebra $\g(\A)$ defined in \cite[Definition 3.17]{Marquis}. Similarly, there exists a cofree Kac-Moody root datum $\mc{D}^{\A}_{\text{sc}}$ of rank $d=n$ such that the associated complex Kac-Moody algebra is the derived algebra of $\g(\A)$ (cf. \cite[Example 7.11]{Marquis}). If $\A$ is invertible, these root data are the same. If $\A$ is symmetrizable, then by the \textit{Gabber-Kac theorem} (cf. \cite[Theorem 9.11]{Kac}), these two definitions also coincide (up to some minor conventions; see \cite[Remark 3.19]{Marquis}) with the ``classical'' Kac-Moody algebra (resp. its derived algebra) that was first introduced by V.G. Kac and studied in \cite{Kac}.
\end{rmk}

\noindent The \textit{Weyl group} associated to $\A$ is the group $W$ generated by the set $S:=\{s_i \mid i\in I\}$ subject to the relations $(s_is_j)^{m_{ij}}=1$ for all $i,j\in I$, where $m_{ij}$ is equal to $1$ if $i=j$ and determined by the following table if $i\neq j$:
	\begin{equation} \label{CoxeterTable}
		\begin{array}{c|c|c|c|c|c}
			a_{ij}a_{ji} & 0 & 1 & 2 & 3 & \geq 4 \\
			\hline 
			m_{ij} & 2 & 3 & 4 & 6 & \infty
		\end{array}
	\end{equation}
By definition, $(W,S)$ is a Coxeter system with Coxeter matrix $M=(m_{ij})_{i,j\in I}$ and we denote by $l:W\to\N_0$ the length function of $W$ with respect to the generating set $S$. The Weyl group operates on $Q$ and $\Lambda$, where the action of the generator $s_i$ $(i\in I)$ is given by the following maps:
	\begin{align*}
		Q &\to Q &\qquad \Lambda &\to \Lambda \\
		\beta &\mapsto \beta-\langle\beta,\alpha_i^\vee\rangle\alpha_i &\qquad \lambda&\mapsto \lambda-\langle \lambda,h_i\rangle c_i
	\end{align*}
By duality, $W$ also acts on $Q^\vee$ and $\Lambda^\vee$ such that the natural pairings between $Q$ and $Q^\vee$ (resp. $\Lambda$ and $\Lambda^\vee)$ are $W$-invariant. In particular, these actions induce $\K$-linear representations of $W$ on $\h$ and $\h^\ast$ by extension of scalars. By definition, the canonical maps $c:Q\to\Lambda$ and $h:Q^\vee\to\Lambda^\vee$ are $W$-equivariant. The $W$-actions on $Q$ and $Q^\vee$ are faithful while the actions on $\Lambda$ and $\Lambda^\vee$ (and hence on $\h$ on $\h^\ast$) are not necessarily faithful if $\mc{D}$ is neither free nor cofree. Since the actions of $W$ on $\Lambda$ and $\Lambda^\vee$ are dual, it suffices that one of the maps $c$ or $h$ is injective, i.e. that $\mc{D}$ is either free or cofree, in order to obtain a faithful action. The set of roots $\Phi\subset Q$ is $W$-invariant and the subset $\Phi^{\op{re}}:=W\cdot\Phi_0\subset\Phi$ is called the set of \textit{real roots}. Its complement $\Phi^{\op{im}}:=\Phi\setminus\Phi^{\op{re}}$, called the set of \textit{imaginary roots}, will not play a major role in our discussion. If $\alpha\in\Phi^{\op{re}}$, i.e. $\alpha=w(\alpha_i)$ for some $w\in W$ and $i\in I$, the element $\alpha^\vee:=w(\alpha_i^\vee)\in Q^\vee$ is independent of the representation of $\alpha$ as a $W$-translate of a simple root and the associated reflection 
	$$s_{\alpha}:Q\to Q, \qquad \beta\mapsto \beta-\langle\beta,\alpha^\vee\rangle\alpha$$
satisfies $s_\alpha=ws_iw^{-1}\in W$. For every $w\in W$ and $i\in I$, we have
\begin{equation}
	w(\alpha_i)\in\Phi_+ \Longleftrightarrow l(w)<l(ws_i). \label{PositivityCriterion}
\end{equation}
We use the following standard terminology from the theory of Coxeter groups. Given an element $w\in W$, there is a unique set $S(w)\subset S$ such that every reduced decomposition of $w$ contains precisely the reflections in $S(w)$. We call $\op{supp}(w):=\{i\in I:s_i\in S(w)\}$ the \textit{support} of $w$. For any subset $J\subset I$, the subgroup $W_J$ of $W$ generated by $S_J:=\{s_j \mid j\in J\}$ is called a \textit{standard subgroup} of $W$. A left coset $wW_J$ with $w\in W$ is called a \textit{standard coset}. If $J=I\setminus\{i\}$ for some $i\in I$, we write $S^{(i)}$ and $W^{(i)}$ instead of $S_{I\setminus\{i\}}$ and $W_{I\setminus\{i\}}$. The \textit{Coxeter complex} of $(W,S)$ is the partially-ordered set (``poset'') of all standard cosets of $(W,S)$ ordered by reversed inclusion and denoted by
$$\Sigma(W,S)=\{wW_J \mid w\in W,\, J\subset I\}.$$
We also write $\Sigma^\ast(W,S)$ for the poset with the same underlying set but ordered by non-reversed inclusion. Every standard coset $wW_J\subset W$ has a unique element $w'$ of minimal length which is characterized by $l(w'w_J)=l(w')+l(w_J)$ for all $w_J\in W_J$.
\medskip \newline
Next, we introduce some notions from the representation theory of $\g$ (cf. \cite[Section 4.1]{Marquis} or \cite[Chapter 3 and 9]{Kac}). A representation $\rho$ of $\g$ on a $\K$-vector space $V$ is said to be \textit{integrable} if $\h$ acts diagonally on $V$ and for every $i\in I$ and $v\in V$ there exists some $N\in\N$ such that $\rho(e_i)^Nv=\rho(f_i)^Nv=0$. A functional $\lambda\in\h^\ast$ is called a \textit{dominant integral weight} if it satisfies $\langle\lambda,h_i\rangle\in \N_0$ for all $i\in I$. For every such weight, there exists a unique irreducible integrable $\K$-linear representation $\rho_\lambda:\g\to\mf{gl}(L(\lambda))$ with highest weight $\lambda$. This means that there exists a non-zero vector $v_\lambda\in L(\lambda)$, which is unique up to scalars, such that $\rho_\lambda(h)v_\lambda=\langle \lambda,h\rangle v_\lambda$ for all $h\in\h$, the subalgebra $\mf{u}_+:=\bigoplus_{\alpha\in\Phi_+} \g_\alpha\subset\g$ annihilates $v_\lambda$ and $L(\lambda)$ is generated by $v_\lambda$ as a representation of the universal enveloping algebra of $\g$. The Cartan subalgebra $\h\subset\g$ acts diagonally on $L(\lambda)$, its joint eigenvalues $\op{wt}(L(\lambda))\subset\h^\ast$ are the \textit{weights} of $L(\lambda)$ and the joint eigenspace decomposition
	$$L(\lambda)=\bigoplus_{\mu\in\op{wt}(L(\lambda))} L(\lambda)_\mu$$
is the \textit{weight space decomposition} of $L(\lambda)$. Every weight is of the form $\lambda-c(\alpha)$ for some $\alpha\in Q_+$ and the set of weights is $W$-invariant. If $\mu$ is any weight and $x\in \g_\alpha$ for some $\alpha\in\Phi$, then it is immediate from the defining relations \eqref{DefAlgRelations} of $\g$ that 
\begin{equation}
	\rho_\lambda(x)(L(\lambda)_\mu)\subset L(\lambda)_{\mu+c(\alpha)}. \label{WeightSpaceTranslation}
\end{equation} 
Depending on the Kac-Moody root datum $\mc{D}$, there does not necessarily exist a non-zero dominant integral weight. If $\mc{D}$ is cofree, however, then $\{h_i\mid i\in I\}$ can be extended to a $\K$-basis of $\h$ and the corresponding dual basis consists of dominant integral weights. If $\langle\lambda,h_i\rangle=0$ for all $i\in I$, then the defining relations of $\g$ imply that we may define a representation $\rho_\lambda$ of $\g$ on the one-dimensional vector space $\K$ by setting $\rho_\lambda(e_i)=\rho_\lambda(f_i):=0$ for all $i\in I$ and $\rho_\lambda(h)1:=\langle \lambda,h\rangle$ for all $h\in\h$. This clearly defines an irreducible representation of $\g$ with highest weight $\lambda$. Since $L(\lambda)$ is unique up to isomorphism, it follows that $L(\lambda)$ is one-dimensional and isomorphic to $\K$ under the map $v_\lambda\mapsto 1$.

\begin{assum} \label{assum:GlobalAssumption}
	From now on, we will always make the following assumptions which will play a central role in the proof of our main theorem:
	\begin{itemize}
		\item[(i)] The GCM $\A$ is symmetrizable.
		\item[(ii)] The Kac-Moody root datum $\mc{D}$ is free and cofree.
		\item[(iii)] The field $\K$ is either $\R$ or $\C$.
	\end{itemize}
\end{assum}

\noindent The first assumption occurs frequently in the theory of Kac-Moody algebras and will be used explicitly in the proof of Theorem \ref{thm:KMKostant}. The necessity of the third will already be apparent in the formulation of that result as it involves an ``exponential function'' on the Cartan subalgebra of $\g$ whose definition is based on the usual real or complex exponential map. In our discussion, it will usually be possible to treat real and complex Kac-Moody algebras at the same time, so we do not want to distinguish them in notation. Hence, for the rest of this article, we denote by $\g$ either the real or the complex Kac-Moody algebra of type $\mc{D}$. If we need to emphasize the field, we write $\g_\R$ or $\g_\C$. It will, however, be very important to distinguish the real Cartan subalgebra from the complex one, so we will write $\a:=\Lambda^\vee\otimes_\Z \R$ and $\h:=\Lambda^\vee\otimes_\Z \C$. We explicitly do not denote these algebras by $\h_\R$ and $\h_\C$ since we often want to view $\a$ as a subset of $\g_\C$ and a notation like $\h_\R\subset\g_\C$ might be misleading. We denote by $\theta:\g\to\g$ the involutive automorphism of $\g$ obtained from pre-composing the Chevalley involution of $\g$ with complex conjugation. More precisely, if $\K=\R$, then $\theta$ coincides with the Chevalley involution $\omega_\R$ of $\g_\R$, and if $\K=\C$, then $\theta$ is the unique extension of $\omega_\R$ to a $\C$-antilinear automorphism of $\g_\C$. In both cases, we call $\theta$ the \textit{compact involution} of $\g$ and denote its fixed-point set by $\k$.
\medskip \newline
Assumption (ii) is a simplification that requires some justification. It allows us to identify $Q$ and $Q^\vee$ with subsets of $\Lambda$ and $\Lambda^\vee$. For every $i\in I$, we will therefore write $\alpha_i$ for $c_i$ and $\alpha_i^\vee$ for $h_i$ and suppress the inclusions $c$ and $h$ from the notation. With this convention, we may extend $\Phi_0^\vee\subset\Lambda^\vee$ to an $\R$-basis $\mc{B}:=\{\alpha_1^\vee,\ldots,\alpha_n^\vee,h_{n+1},\ldots,h_d\}$ of $\a$, which also yields a $\C$-basis of $\h$, and we denote by $\mc{B}^\ast:=\{\omega_1,\ldots,\omega_n,\phi_{n+1},\ldots,\phi_d\}\subset\a^\ast$ the corresponding dual basis. 
Recall that the assumption that $\mc{D}$ is free and cofree implies $d\geq 2n-\op{rank}(\A)\geq n$. The complex Kac-Moody algebra $\g_\C$ is then simply a direct product of the Kac-Moody algebra $\g(\A)$ studied in \cite{Marquis}, which coincides with the definition in \cite{Kac} by assumption (i) and the Gabber-Kac theorem. More precisely, we have $\g_\C\cong\g(\A)\oplus \h'$ as a Lie algebra, where $\h'$ is a subspace of $\h$ of dimension $d-2n+\op{rank}(\A)$ on which all roots vanish. The obvious analogue of this assertion holds in the real case. This allows us to use most results of \cite{Kac} and \cite{Marquis} in our setting. Simply put, we study Kac-Moody algebras that behave exactly like the classical ones, except that we allow the Cartan subalgebra to have dimension larger than $2n-\op{rank}(\A)$. As discussed in Remark \ref{rmk.RDissues}, condition (ii) is always satisfied if $\A$ is invertible.
\medskip \newline
Moreover, the assumption that $\mc{D}$ is free guarantees that the $\R$-linear representation $W\to\GL(\a)$ is faithful and the set
$$C:=\{h\in\a: \langle\alpha_i,h\rangle>0 \,\, \forall i\in I\}$$
is not empty. It is called the \textit{open fundamental chamber} and its closure (w.r.t. the standard Euclidean topology on $\a$)
$$\overline{C}:=\{h\in\a: \langle\alpha_i,h\rangle\geq 0 \,\, \forall i\in I\}$$
is the \textit{closed fundamental chamber}. It can equivalently be described as
\begin{equation}
	\overline{C}=\bigg\{h\in\a: \forall w\in W: h-w(h)=\sum_{i=1}^n c_i\alpha_i^\vee \text{ for some $c_1,\ldots,c_n\in \R_{\geq 0}$}\bigg\}. \label{fundamentalChamber}
\end{equation}
The $W$-translates of $\overline{C}$ are called \textit{Weyl chambers}. Every such set $w(\overline{C})\subset\a$ with $w\in W$ is a simplicial cone which is bounded by the \textit{walls} $\{w(V_{\alpha_i}) \mid i\in I\}$, where $V_{\alpha_i}:=\op{ker}(\alpha_i)$. The union of all Weyl chambers
$$X:=\bigcup_{w\in W} w(\overline{C})\subset\a$$
is the \textit{Tits cone} of $\a$. The closed fundamental chamber is a strict fundamental domain for the $W$-action on $X\subset\a$, i.e. every point of $X$ lies in the $W$-orbit of a unique point in $\overline{C}$. If $h\in\overline{C}$ and $J:=\{i\in I:\langle\alpha_i,h\rangle=0\}$, then the stabilizer of $h$ is the standard subgroup $W_J\subset W$ generated by the reflections $S_J=\{s_j \mid j\in J\}$.
\medskip \newline
Finally, Assumption \ref{assum:GlobalAssumption} also has one important consequence for the representation theory of $\g$. If $\lambda\in\a^\ast\subset\h^\ast$ is a dominant integral weight, then by \cite[Proposition 9.4 and Lemma 11.5]{Kac} there exists a non-degenerate Hermitian form $H:L(\lambda)_\C\times L(\lambda)_\C\to\C$ which is \textit{contravariant} in the sense that
	\begin{equation}
		H(x\cdot v,w)=-H(v,\theta(x)\cdot w) \qquad \text{for all $x\in\g_\C$ and $v,w\in L(\lambda)_\C$.} \label{ContravariantForm}
	\end{equation}
It is unique up to scalars and different weight spaces are $H$-orthogonal. Since $\lambda|_\a$ is real valued, it restricts to a non-degenerate symmetric bilinear form on $L(\lambda)_\R$. If the GCM $\A$ is symmetrizable, then $H$ is positive definite by \cite[Theorem 11.7]{Kac}. In particular, if $\mc{D}$ is free and cofree, the dual basis $\mc{B}^\ast\subset\a^\ast$ consists of dominant integral weights. For ease of reference, we summarize the most important consequences of Assumption \ref{assum:GlobalAssumption} in the following proposition.

\begin{prop} \label{prop:AssumptionConsequences}
	Suppose that Assumption \ref{assum:GlobalAssumption} is in effect, then $d\geq 2n-\op{rank}(\A)$ and the following three properties hold:
		\begin{itemize}
			\item[(i)] The dual basis $\mc{B}^\ast=\{\omega_1,\ldots,\omega_n,\phi_{n+1},\ldots,\phi_d\}\subset\a^\ast$ consists of dominant integral weights. Each of the real or complex highest-weight modules $L(\omega_1),\ldots,L(\omega_n),L(\phi_{n+1}),\ldots,L(\phi_d)$ can be endowed with a positive definite, contravariant, symmetric bilinear or Hermitian form $H$ such that weight spaces belonging to different weights are $H$-orthogonal. 
			\item[(ii)] For every $i\in\{n+1,\ldots,d\}$, the highest-weight module $L(\phi_i)$ is one-dimensional. The subalgebras $\mf{u}_+$ and $\mf{u}_-$ of $\g$ act trivially on $L(\phi_i)$ and an element $h\in\h$ acts on $L(\phi_i)$ by multiplication with $\langle \phi_i,h\rangle$.
			\item[(iii)] The open fundamental chamber $C\subset\a$ is not empty.
		\end{itemize}
\end{prop}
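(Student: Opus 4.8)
The plan is to assemble the three assertions from facts that have essentially all been recorded in the preceding discussion; the proof reduces to organizing these pieces and carrying out one pairing computation. First I would dispose of the inequality $d\geq 2n-\op{rank}(\A)$: by Remark \ref{rmk.RDissues} every free and cofree root datum satisfies this bound, and $2n-\op{rank}(\A)\geq n$ since $\op{rank}(\A)\leq n$.

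For (i) the one point requiring verification is that every element of the dual basis $\mc{B}^\ast$ is a dominant integral weight, i.e.\ pairs to a non-negative integer with each simple coroot $h_i=\alpha_i^\vee$ ($i\in I$). Under the identification afforded by cofreeness we have $\mc{B}=\{\alpha_1^\vee,\ldots,\alpha_n^\vee,h_{n+1},\ldots,h_d\}$, and the simple coroots are precisely its first $n$ members. Hence $\langle\omega_i,\alpha_j^\vee\rangle=\delta_{ij}$ and $\langle\phi_i,\alpha_j^\vee\rangle=0$ for all relevant indices, so every such pairing lies in $\{0,1\}\subset\N_0$. Granting this, for each weight $\lambda\in\mc{B}^\ast$ I would invoke \cite[Proposition 9.4 and Lemma 11.5]{Kac} to obtain a non-degenerate contravariant Hermitian form on $L(\lambda)_\C$ under which distinct weight spaces are orthogonal, and then \cite[Theorem 11.7]{Kac}---which is exactly where symmetrizability of $\A$ enters---to conclude positive-definiteness. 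In the real case one restricts this form to $L(\lambda)_\R$, where it becomes a symmetric bilinear form; it remains non-degenerate and positive definite because $\lambda|_\a$ is real valued.

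For (ii) I would observe that for $i\in\{n+1,\ldots,d\}$ the weight $\phi_i$ is dual to $h_i\in\mc{B}$ and therefore pairs to zero with every simple coroot $h_j=\alpha_j^\vee$, $j\in I$. This is precisely the degenerate situation analyzed immediately before the statement: the defining relations \eqref{DefAlgRelations} then force $\rho_{\phi_i}(e_j)=\rho_{\phi_i}(f_j)=0$ for all $j\in I$, so $L(\phi_i)$ is one-dimensional and an element $h\in\h$ acts by multiplication with $\langle\phi_i,h\rangle$. Since $\mf{u}_+$ and $\mf{u}_-$ are generated as Lie algebras by the $e_j$ and the $f_j$ respectively, both subalgebras annihilate $L(\phi_i)$. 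Finally, (iii) is immediate from freeness of $\mc{D}$, which was already noted to guarantee both faithfulness of the representation $W\to\GL(\a)$ and non-emptiness of $C$.

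There is no genuinely hard step here, since the substance of the statement has been front-loaded into the surrounding exposition; the proof is essentially a matter of bookkeeping. The only real input beyond the dual-basis pairing computation is the positive-definiteness of the contravariant form, which rests on symmetrizability of $\A$ via \cite[Theorem 11.7]{Kac}, together with careful tracking of the real/complex distinction when passing from the Hermitian form on $L(\lambda)_\C$ to the symmetric bilinear form on $L(\lambda)_\R$.
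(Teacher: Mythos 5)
Your proposal is correct and follows essentially the same route as the paper, which in fact gives no separate proof of this proposition but presents it as a summary of the immediately preceding discussion: the bound $d\geq 2n-\op{rank}(\A)$ from Remark \ref{rmk.RDissues}, the dual-basis pairing computation together with \cite[Proposition 9.4, Lemma 11.5, Theorem 11.7]{Kac} for (i), the one-dimensional representation constructed for weights vanishing on all simple coroots for (ii), and freeness of $\mc{D}$ for (iii). All steps check out, including the restriction of the Hermitian form to the real form $L(\lambda)_\R$.
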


\section{Convex hulls of Weyl group orbits} \label{ConvexHulls}

The goal of this chapter is to study the geometry of $W$-orbits in $\a$ and relate it to the combinatorial structure of the Coxeter system $(W,S)$. More precisely, we will be interested in the convex hull of the $W$-orbit of an element $h\in\a$:
	$$\Xi_h:=\conv(W\cdot h)$$
The structure of this set depends heavily on the location of $h$ with respect to the double Tits cone $X\cup -X\subset\a$. One can distinguish the following three possibilities. We say that $h$ is
\begin{itemize}
	\item \textit{regular} if $h$ or $-h$ is contained in an open chamber of $X$.
	\item \textit{singular} if $h$ or $-h$ is contained in a wall of $X$.
	\item \textit{external} if neither $h$ nor $-h$ is contained in $X$.
\end{itemize}
In the present article, we will restrict our attention to regular elements, which exist since $\mc{D}$ is assumed to be free. In this case, we shall see that $\Xi_h$ has a simple combinatorial description in terms of the Coxeter complex $\Sigma(W,S)$. Some of our results do indeed generalize to the singular case and we briefly discuss this in Section \ref{ConcludingRemarks}. We emphasize that Assumption \ref{assum:GlobalAssumption} and the notational conventions following it are in effect for this entire chapter. 

\begin{rmk}
	If the Weyl group $W$ is finite, then $\Xi_h$ is a convex polytope (see Section \ref{ConvexGeom} for the definition) and most results of Section \ref{regularOrbit} can be deduced from the general theory of these sets. However, this is the case if and only if $\A$ is the Cartan matrix of a finite-dimensional semisimple Lie algebra which is a very small subclass of Kac-Moody algebras. If $W$ is infinite, it is not even a priori clear whether $\Xi_h$ is a closed subset of $\a$. 
\end{rmk}

\noindent \textbf{Convention:} From this point onward, the set of imaginary roots $\Phi^{\op{im}}$ will not appear in our discussion. Thus, we simplify the notation and denote the set of real roots by $\Phi$ instead of $\Phi^{\op{re}}$.

\subsection{Some results from convex geometry} \label{ConvexGeom}

To prepare for the study of $\Xi_h$, we begin by collecting some basic properties of convex sets in finite-dimensional real vector spaces. We follow the terminology in \cite{Rockafellar}. More details and proofs of the assertions about general convex sets can be found in \cite[§1, 6, 11, 17, 18]{Rockafellar} or \cite[Chapter 3-5]{Gruber}. A thorough discussion of convex polytopes that goes way beyond what is needed for this article is provided by \cite[Chapter 14-20]{Gruber}.
\medskip \newline
Let $V$ be a real vector space of dimension $d<\infty$ equipped with its standard Euclidean topology. For two points $x,y\in V$, we denote by $[x,y]$ the closed line segment connecting $x$ to $y$ and by $(x,y)$, $[x,y)$ and $(x,y]$ the corresponding open and half-open line segments. Let $P\subset V$ be a convex subset, not necessarily closed. A \textit{face} of $P$ is a non-empty convex subset $F\subset P$ such that whenever $p_1,p_2\in P$ satisfy $(p_1,p_2)\cap F\neq\emptyset$, then we already have $[p_1,p_2]\subset F$. The \textit{dimension} of a face is the dimension of its affine span. We write $\mc{F}(P)$ for the partially-ordered set of faces of $P$ ordered by inclusion. Observe that $P$ is trivially a face of itself, but we have excluded the empty set from being a face. This is merely a convention that will simplify the terminology in Section \ref{regularOrbit}. Hence, $\mc{F}(P)$ has a maximal element, but not necessarily a minimal one. The intersection of any two faces of $P$ is either empty or again a face of $P$ and for any face $F$ of $P$, the faces of $F$ are precisely the faces of $P$ contained in $F$. If $A\subset P$ is a convex subset and $F$ a face of $P$ which contains a point from the relative interior of $A$, then it follows immediately from the definition of a face that $A\subset F$. In particular, every proper face $F\subsetneq P$ is contained in the topological boundary of $P$ and the relative interiors of different faces are disjoint. For two points $p_1,p_2\in P$, we therefore have $[p_1,p_2]\subset \partial P$ if and only if $p_1$ and $p_2$ are contained in a common proper face. Note that non-trivial faces do not exist if $P\cap\partial P=\emptyset$. \\
We will mostly be interested in a less general notion. Suppose that $E\subset V$ is an affine hyperplane\footnote{By an \textit{affine hyperplane} in $V$, we always mean an affine subspace of codimension $1$.} such that $P$ lies in one of the two closed half-spaces defined by $E$ and $F:=P\cap E\neq\emptyset$. Then $F$ is a face of $P$ and faces of this type are said to be \textit{exposed}. The plane $E$ is called a \textit{supporting hyperplane} of $P$ and the half-space containing $P$ is a \textit{supporting half-space}. Exposed faces of dimension $0$ and $1$ are called \textit{vertices} and \textit{edges}, those of dimension $\dim(P)-1$ are called \textit{facets} and their supporting hyperplanes are referred to as \textit{defining hyperplanes}. If $\dim(P)=d$, then defining hyperplanes are uniquely determined and, more generally, two defining hyperplanes give rise to the same face if and only if their intersections with the affine span of $P$ coincide. The trivial face $P$ is clearly exposed if $\dim(P)<d$ since every affine hyperplane $E$ containing $P$ is supporting, but it is trivial in the sense that $P$ is contained in both closed half-spaces defined by $E$. We also consider $P$ to be exposed if $\dim(P)=d$. But $P$ may have non-exposed proper faces which are then necessarily contained in $\partial P$. However, it follows from a separation argument (cf. \cite[Theorem 11.6]{Rockafellar}) that every point of $P\cap \partial P$ is contained in a non-trivial supporting hyperplane of $P$. Consequently, every proper face of $P$ is contained in a proper exposed face. In particular, if $P$ is closed, then every point of $P$ is either contained in the relative interior of $P$ or lies on a non-trivial supporting hyperplane, which implies that $P$ can be reconstructed as the intersection of all supporting half-spaces containing it. In algebraic terms, this means that a point is contained in $P$ if and only if it satisfies a family of linear (in-)equalities. If $\dim(P)=d$, then every supporting hyperplane is non-trivial and none of these inequalities are equalities. By replacing $V$ with the affine span of $P$, it is usually no restriction to assume that $\dim(P)=d$, but for our purposes, it will be more convenient to avoid this assumption. \\
As mentioned above, the faces of a face $F$ of $P$ are exactly the faces of $P$ contained in $F$. In particular, this holds for exposed faces. More generally, suppose that $E\subset V$ is an arbitrary hyperplane, not necessarily supporting. If $P':=P\cap E$ is non-empty, then for every face $F$ of $P$, the intersection $F\cap E$ is either empty or a face of $P'$. Conversely, it is not difficult to show that for every face $F'$ of $P'$ there exists a face $F$ of $P$ such that $F'=F\cap E$. The intersection of all such faces of $P$ also has this property, so if $F$ is assumed to be minimal, it is uniquely determined and its dimension is given by
	\begin{equation}
		\op{dim}(F)=\begin{cases} \op{dim}(F') & \text{if $F'$ is a face of $P$ contained in $E$} \\ \op{dim}(F')+1 & \text{otherwise.}\end{cases} \label{FaceIntersection}
	\end{equation}
For a set $Y\subset V$, the \textit{convex hull} of $Y$ is the smallest convex subset of $V$ containing $Y$. It consists precisely of all finite convex combinations $\sum_{y\in Y}\lambda_y y$, where $\lambda_y\in [0,1]$ for all $y\in Y$, $\lambda_y>0$ only for finitely many $y\in Y$ and $\sum_{y\in Y}\lambda_y=1$. It is denoted by $\conv(Y)$. If $Y$ is finite, then $P:=\conv(Y)$ is compact and called a \textit{convex polytope}. A convex polytope $P$ has only finitely many faces which are all exposed. Every face $F$ of $P$ is of the form $F=\conv(Y')$ for some subset $Y'\subset Y$ and thus again a convex polytope. For every $0\leq k\leq\dim(P)$, there exists a $k$-dimensional face of $P$ and all maximal chains in $\mc{F}(P)$ have the same length. In particular, every non-trivial face of $P$ is contained in a facet, i.e. a maximal proper face. \\
If $Y$ is infinite, however, it is not necessarily true that $P:=\conv(Y)$ is closed, not even if $Y$ is uniformly discrete. It might even be open in which case it cannot have any non-trivial faces. Hence, the theory of convex polytopes cannot be applied in this situation and it is not clear whether $\mc{F}(P)$ has any interesting properties. However, it is still true that if $F\subset P$ is an exposed face, then there exists a subset $Y'\subset Y$ such that $F=\conv(Y')$. 
	
\subsection{Orbits of regular elements} \label{regularOrbit}

We now turn to the study of $\Xi_h=\conv(W\cdot h)$ where $h\in\a$ is regular. After possibly replacing $h$ by $-h$, it is no restriction to assume that $h$ is contained in an open chamber of $X$. In this case, the $W$-translates $\{w(h) \mid w\in W\}$ are all distinct. Since $\Xi_h$ is $W$-invariant, we may w.l.o.g. assume that $h\in C$, i.e. $\langle\alpha_i,h\rangle>0$ for all $i\in I$. Our goal is to show that $\Xi_h$ is a closed convex set whose face poset $\mc{F}(\Xi_h)$ is isomorphic to $\Sigma^\ast(W,S)$, i.e. the poset of all standard cosets of $(W,S)$ ordered by non-reversed inclusion. Our approach will consist of two steps: First, we explicitly construct exposed faces of $\Xi_h$. Afterwards, we show that $\Xi_h$ is closed and that every face is exposed.
\medskip \newline
First of all, note that the assumption that $\mc{D}$ is cofree implies $\dim(\Xi_h)=n$ since it is contained in the $n$-dimensional affine subspace $E_h:=h+\a'$, where $\a':=\bigoplus_{i=1}^n \R\alpha_i^\vee$, and the $n+1$ points $\{h,s_1(h),\ldots,s_n(h)\}\subset\Xi_h$ are affinely independent. Next, $W$-invariance of $\Xi_h$ also implies that the set of faces of $\Xi_h$ is $W$-invariant. Hence, every face of $\Xi_h$ is $W$-equivalent to a face that meets $\overline{C}$. The same is also true for the subset of exposed faces. Every exposed face $F$ of $\Xi_h$ is of the form $F=\op{conv}(W'\cdot h)$ where $W'$ is a subset of $W$, so by $W$-invariance, it is also no restriction to assume that $1\in W'$ and hence $h\in F$. Thus, it is sufficient to construct exposed faces containing $h$. To this end, we define for every $i\in I$ the hyperplane
	$$H^{(i)}:=\{h'\in E_h: \langle \omega_i,h' \rangle =\langle \omega_i,h \rangle\}\subset E_h$$
In order to show that these hyperplanes give rise to faces of $\Xi_h$, we need the following lemma from the general theory of Coxeter groups, which is a refinement of \eqref{fundamentalChamber}. 

\begin{lem}\label{lem:weyl_difference}
	If $h\in C$, then $h-w(h)=\sum_{i\in I} c_i\alpha_i^\vee$ with $c_i>0$ for all $i\in\op{supp}(w)$ and $c_i=0$ for all $i\notin \op{supp}(w)$.
\end{lem}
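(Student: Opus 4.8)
The plan is to argue by induction on the length $l(w)$. The base case $w=1$ is immediate, since then $h-w(h)=0$, all coefficients vanish and $\op{supp}(1)=\emptyset$. For the inductive step I fix a reduced decomposition of $w$ and split off the leftmost generator, writing $w=s_iw'$ with $l(w')=l(w)-1$. Prepending $s_i$ to a reduced word for $w'$ yields a reduced word for $w$, so the generators occurring are $S(w)=\{s_i\}\cup S(w')$ and hence $\op{supp}(w)=\{i\}\cup\op{supp}(w')$. This is the combinatorial input that keeps the bookkeeping of supports transparent.

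The computation is then short. On $\a$ the generator $s_i$ acts by $s_i(v)=v-\langle\alpha_i,v\rangle\alpha_i^\vee$, so applying this to $w'(h)$ gives
$$h-w(h)=h-s_i\big(w'(h)\big)=\big(h-w'(h)\big)+\langle\alpha_i,w'(h)\rangle\,\alpha_i^\vee.$$
By the induction hypothesis $h-w'(h)=\sum_{j\in I}c'_j\alpha_j^\vee$ with $c'_j>0$ precisely for $j\in\op{supp}(w')$ and $c'_j=0$ otherwise. Only the $i$-th coefficient is affected: setting $c_j:=c'_j$ for $j\neq i$ and $c_i:=c'_i+\langle\alpha_i,w'(h)\rangle$, the sign conditions for $j\neq i$ are inherited from $w'$ because $j\in\op{supp}(w)\iff j\in\op{supp}(w')$. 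The entire argument thus reduces to showing $c_i>0$.

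The key is the strict positivity $\langle\alpha_i,w'(h)\rangle>0$. By $W$-invariance of the pairing, $\langle\alpha_i,w'(h)\rangle=\langle w'^{-1}(\alpha_i),h\rangle$. Since $l(s_iw')>l(w')$, the positivity criterion \eqref{PositivityCriterion}, applied to $w'^{-1}$ and using $l(w'^{-1})=l(w')$ together with $l(w'^{-1}s_i)=l(s_iw')$, gives $w'^{-1}(\alpha_i)\in\Phi_+$; hence it is a nonzero nonnegative integral combination of simple roots. As $h\in C$ satisfies $\langle\alpha_k,h\rangle>0$ for every $k\in I$, the pairing $\langle w'^{-1}(\alpha_i),h\rangle$ is strictly positive. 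Therefore $c_i=c'_i+\langle\alpha_i,w'(h)\rangle$ is the sum of a nonnegative and a strictly positive number, so $c_i>0$, matching $i\in\op{supp}(w)$, and the induction closes. The one point that must be handled carefully is exactly this interplay between the length inequality $l(s_iw')>l(w')$, the positivity of $w'^{-1}(\alpha_i)$ and its pairing with the regular element $h\in C$; granting it, the strict positivity on $\op{supp}(w)$ and the vanishing off it follow at once, sharpening \eqref{fundamentalChamber}.
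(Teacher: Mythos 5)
Your proof is correct, but it is organized quite differently from the paper's. The paper takes the nonnegativity of the coefficients and their vanishing off $\op{supp}(w)$ as already known from \eqref{fundamentalChamber}, and then argues by contradiction: assuming $\langle\omega_i,w(h)\rangle=\langle\omega_i,h\rangle$ for some $i\in\op{supp}(w)$, it passes to the minimal-length representative of the coset $W^{(i)}w$, writes it as $s_iw'$, and derives a contradiction. You instead run a direct induction on $l(w)$, peeling off the leftmost letter of a reduced word and tracking all coefficients explicitly via $\op{supp}(w)=\{i\}\cup\op{supp}(w')$. Both arguments hinge on exactly the same key computation, namely $\langle\alpha_i,w'(h)\rangle=\langle w'^{-1}(\alpha_i),h\rangle>0$ via the positivity criterion \eqref{PositivityCriterion} and the regularity of $h$; the difference is purely structural. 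Your version is somewhat more self-contained, since the induction simultaneously establishes the nonnegativity of all coefficients and their vanishing off the support (facts the paper imports from \eqref{fundamentalChamber} and leaves unproved, respectively), at the cost of having to verify the bookkeeping identity $S(w)=\{s_i\}\cup S(w')$ for a reduced splitting, which you do correctly. The paper's contradiction argument avoids the induction entirely but leans on the standard theory of minimal coset representatives.
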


\begin{proof}
	By \eqref{fundamentalChamber} and the fact that $c_i=0$ if $i\notin\op{supp}(w)$, it only remains to show that $c_i>0$ for all $i\in\op{supp}(w)$. Suppose that this was not the case, then we would have $\langle\omega_i,w(h)\rangle=\langle\omega_i,h\rangle$ for some $i\in\op{supp}(w)$. But then every element in the left coset $W^{(i)}w$ satisfies the same equation, so we may assume that $w$ is the unique element of minimal length in that coset. In this case, it has the form $w=s_iw'$ for some $w'\in W$ with $l(w')=l(w)-1$ and we have
		$$\langle\omega_i,h\rangle=\langle\omega_i,w(h)\rangle=\langle s_i(\omega_i),w'(h)\rangle=\langle\omega_i,w'(h)\rangle-\langle\alpha_i,w'(h)\rangle$$
	with $\langle\omega_i,w'(h)\rangle\leq \langle\omega_i,h\rangle$ by \eqref{fundamentalChamber} and $\langle\alpha_i,w'(h)\rangle=\langle w'^{-1}(\alpha_i),h\rangle>0$ by \eqref{PositivityCriterion} because $l(w'^{-1})=l(w')<l(w)=l(w^{-1})=l(w'^{-1}s_i)$. Hence, we arrive at a contradiction.
\end{proof}

\begin{prop} \label{prop:maxface}
	For every $i\in I$, the plane $H^{(i)}$ is a defining hyperplane of $\Xi_h$ and the corresponding facet of $\Xi_h$ is given by 
	$$F^{(i)}:=\op{conv}(W^{(i)}\cdot h),$$
	where $W^{(i)}$ is the subgroup of $W$ generated by $S^{(i)}=S\setminus\{s_i\}$.
\end{prop}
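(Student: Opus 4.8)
The plan is to realize $F^{(i)}$ as the exposed face cut out by the supporting hyperplane $H^{(i)}$ and then compute its dimension directly. The central input is Lemma \ref{lem:weyl_difference}. Writing $h-w(h)=\sum_{j\in I} c_j\alpha_j^\vee$ and pairing with $\omega_i$, the duality $\langle\omega_i,\alpha_j^\vee\rangle=\delta_{ij}$ gives
$$\langle\omega_i,h\rangle-\langle\omega_i,w(h)\rangle=c_i\geq 0$$
for every $w\in W$, with equality precisely when $c_i=0$. By Lemma \ref{lem:weyl_difference} this happens exactly when $i\notin\op{supp}(w)$. Hence the affine functional $\langle\omega_i,\cdot\rangle$ attains its maximum over the orbit $W\cdot h$ at the value $\langle\omega_i,h\rangle$, so $\Xi_h=\conv(W\cdot h)$ lies in the closed half-space $\{h'\in E_h:\langle\omega_i,h'\rangle\leq\langle\omega_i,h\rangle\}$ of $E_h$ bounded by $H^{(i)}$. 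Since $h\in\Xi_h\cap H^{(i)}$, the plane $H^{(i)}$ is a genuine supporting hyperplane of $\Xi_h$ in $E_h$, and $F^{(i)}:=\Xi_h\cap H^{(i)}$ is an exposed face.

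Next I would identify this face with $\conv(W^{(i)}\cdot h)$. Writing a point of $F^{(i)}$ as a convex combination $\sum_w\lambda_w w(h)$ and using that $\langle\omega_i,w(h)\rangle\leq\langle\omega_i,h\rangle$ for every $w$, the standard separation argument forces $\lambda_w=0$ whenever $\langle\omega_i,w(h)\rangle<\langle\omega_i,h\rangle$; that is, only those $w$ with $i\notin\op{supp}(w)$ contribute. Conversely, $i\notin\op{supp}(w)$ means $s_i$ appears in no reduced word for $w$, i.e. $w\in W^{(i)}$, and every such orbit point lies on $H^{(i)}$. Therefore $F^{(i)}=\conv(\{w(h):w\in W^{(i)}\})=\conv(W^{(i)}\cdot h)$, as claimed.

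Finally I would verify that $F^{(i)}$ is a facet and that $H^{(i)}$ is its defining hyperplane. Since $\dim\Xi_h=n$ and $F^{(i)}$ lies in the $(n-1)$-dimensional plane $H^{(i)}\subset E_h$, it suffices to exhibit $n$ affinely independent points in $W^{(i)}\cdot h$. The natural candidates are $h$ together with $\{s_j(h):j\in I\setminus\{i\}\}$, all of which lie in $W^{(i)}\cdot h$; the differences $s_j(h)-h=-\langle\alpha_j,h\rangle\alpha_j^\vee$ are nonzero (as $h\in C$) multiples of the linearly independent vectors $\{\alpha_j^\vee:j\neq i\}$, so these $n$ points are affinely independent and $\dim F^{(i)}=n-1$. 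Thus $F^{(i)}$ is a facet; and because $\Xi_h$ is full-dimensional in $E_h$, its defining hyperplane is unique, so $H^{(i)}$ is the defining hyperplane of $F^{(i)}$. The only delicate points, both supplied by Lemma \ref{lem:weyl_difference}, are the sharp characterization of the equality case $c_i=0$ and its translation into the group-theoretic condition $w\in W^{(i)}$; everything else is routine convex geometry.
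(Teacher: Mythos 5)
Your proposal is correct and follows essentially the same route as the paper: use Lemma \ref{lem:weyl_difference} (equivalently \eqref{fundamentalChamber}) to show $\langle\omega_i,\cdot\rangle$ is maximized on $W\cdot h$ exactly at the $W^{(i)}$-orbit, conclude that $H^{(i)}$ is supporting with exposed face $\conv(W^{(i)}\cdot h)$, and compute the dimension from the $n$ affinely independent points $h, s_j(h)$ for $j\neq i$. The only cosmetic difference is that you invoke the sharp equality case of Lemma \ref{lem:weyl_difference} from the start, where the paper first uses the weaker inequality \eqref{fundamentalChamber} and brings in the lemma only for the strict inequality when $i\in\op{supp}(w)$.
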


\begin{proof}
	If $h'\in \Xi_h$, we can write it as $h'=\sum_{w\in W} \lambda_w w(h)$ with $\lambda_w\in [0,1]$ for all $w\in W$ and $\sum_{w\in W} \lambda_w=1$, where $\lambda_w>0$ only for finitely many $w\in W$. Moreover, it follows from \eqref{fundamentalChamber} that $\langle \omega_i,h-w(h) \rangle \geq 0$ for every $w\in W$, which implies that
	$$\langle \omega_i,h' \rangle = \sum_{w\in W} \lambda_w\langle \omega_i,w(h) \rangle \leq \sum_{w\in W} \lambda_w \langle \omega_i,h \rangle=\langle \omega_i,h \rangle.$$
	Hence, $\Xi_h$ lies on one side of $H^{(i)}$ and $\Xi_h\cap H^{(i)}$ is an exposed face. Moreover, $H^{(i)}$ contains $h$ and $s_j(h)$ whenever $j\neq i$ since
	$$\langle \omega_i,s_j(h)\rangle=\langle \omega_i,h\rangle-\langle \alpha_j,h\rangle \langle\omega_i,\alpha_j^\vee\rangle=\langle \omega_i,h\rangle.$$
	As these $n$ points are affinely independent, it follows that $\Xi_h\cap H^{(i)}$ has dimension $n-1$ and is therefore a facet of $\Xi_h$. Iterating the computation above shows that $H^{(i)}$ contains $w(h)$ for every $w\in W^{(i)}$ and hence $F^{(i)}\subset \Xi_h\cap H^{(i)}$. Conversely, if $w\notin W^{(i)}$, then every reduced decomposition of $w$ contains $s_i$, i.e. $i\in\op{supp}(w)$. By Lemma \ref{lem:weyl_difference}, this implies that $h-w(h)$ is a linear combination of simple coroots in which $\alpha_i^\vee$ occurs with a positive coefficient, so that $\langle \omega_i,w(h) \rangle<\langle \omega_i,h \rangle$. Let now $h'\in \Xi_h\cap H^{(i)}$, then $h'$ has to be a convex combination of the points in $W\cdot h\cap H^{(i)}$ which we have just shown to coincide with $W^{(i)}\cdot h$. Thus, it follows that $\Xi_h\cap H^{(i)}\subset F^{(i)}$ and the proposition is proven.
\end{proof}

\noindent Observe that we consider the hyperplanes $H^{(1)},\ldots,H^{(n)}$ as subsets of $E_h$, which is the affine span of $\Xi_h$, rather than the full vector space $\a$. If $d>n$, then for every $i\in I$ there are many hyperplanes in $\a$ that intersect $E_h$ in $H^{(i)}$, but this extension process is irrelevant to our discussion since we are only interested in intersections with $\Xi_h\subset E_h$. As a subset of $E_h$, $H^{(i)}$ is uniquely determined by $F^{(i)}$. Since the set of exposed faces is $W$-invariant and closed under intersections, the following corollary follows immediately from the previous proposition.

\begin{cor} \label{cor:CoxeterFaces}
	For every subset $J\subset I$, the set $F_J:=\op{conv}(W_J\cdot h)$ is an exposed face of $\Xi_h$ of dimension $|J|$. More generally, for every $w\in W$, the set $w(F_J)$ is an exposed face of dimension $|J|$ and coincides with $\op{conv}(wW_J\cdot h)$. In particular, every $W$-translate of $h$ is a vertex of $\Xi_h$.
\end{cor}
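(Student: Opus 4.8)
The plan is to realize each $F_J$ as an intersection of the facets $F^{(i)}$ constructed in Proposition \ref{prop:maxface} and then to invoke the fact, recorded in Section \ref{ConvexGeom}, that a nonempty intersection of (finitely many) exposed faces is again exposed. The combinatorial input I would use is the standard identity for standard subgroups of a Coxeter group, $W_{J_1}\cap W_{J_2}=W_{J_1\cap J_2}$, which for $J\subset I$ yields
$$W_J=\bigcap_{i\notin J} W^{(i)},$$
since $\bigcap_{i\notin J}(I\setminus\{i\})=J$. I would then prove the analogous identity for faces, namely $F_J=\bigcap_{i\notin J}F^{(i)}$, and read off the remaining assertions.

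For this identification, recall from the proof of Proposition \ref{prop:maxface} that $F^{(i)}=\Xi_h\cap H^{(i)}$ and that $\langle\omega_i,w(h)\rangle\leq\langle\omega_i,h\rangle$ for every $w\in W$, with equality if and only if $i\notin\op{supp}(w)$, i.e. $w\in W^{(i)}$ (the ``only if'' direction being exactly Lemma \ref{lem:weyl_difference}). Writing a point of $\Xi_h$ as a convex combination $h'=\sum_{w\in W}\lambda_w w(h)$, the condition $h'\in\bigcap_{i\notin J}H^{(i)}$ forces $\langle\omega_i,h'\rangle=\langle\omega_i,h\rangle$ for each $i\notin J$; since each summand satisfies $\langle\omega_i,w(h)\rangle\leq\langle\omega_i,h\rangle$, this is possible only if $\lambda_w=0$ whenever $w\notin W^{(i)}$. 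Imposing this for all $i\notin J$ and using $\bigcap_{i\notin J}W^{(i)}=W_J$ shows that $h'$ is supported on $W_J\cdot h$, whence $\bigcap_{i\notin J}F^{(i)}=\op{conv}(W_J\cdot h)=F_J$. As a nonempty intersection of exposed faces (it contains $h$), $F_J$ is exposed.

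It then remains to compute dimensions and handle translates. For the lower bound, the $|J|+1$ points $\{h\}\cup\{s_j(h)\mid j\in J\}$ lie in $F_J$ and are affinely independent, being a subset of the affinely independent set $\{h,s_1(h),\ldots,s_n(h)\}$, so $\dim(F_J)\geq |J|$. For the upper bound, $F_J$ lies in the affine subspace $\bigcap_{i\notin J}H^{(i)}$ of $E_h$; since $\langle\omega_i,\alpha_j^\vee\rangle=\delta_{ij}$, the restrictions $\{\omega_i|_{\a'}\}_{i\in I}$ are linearly independent, so this subspace has codimension $n-|J|$ in the $n$-dimensional space $E_h$, hence dimension $|J|$. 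Thus $\dim(F_J)=|J|$. Finally, since $\Xi_h$ is $W$-invariant and $W$ acts by linear automorphisms, each $w\in W$ carries supporting hyperplanes to supporting hyperplanes, so $w(F_J)$ is again an exposed face of the same dimension, and $w(F_J)=w(\op{conv}(W_J\cdot h))=\op{conv}(wW_J\cdot h)$ because convex hulls commute with affine maps. The vertex statement is the special case $J=\emptyset$, where $W_\emptyset=\{1\}$ and $F_\emptyset=\{h\}$.

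The part I expect to require the most care is the identification $F_J=\bigcap_{i\notin J}F^{(i)}$: it rests on the sharp equality criterion ``$\langle\omega_i,w(h)\rangle=\langle\omega_i,h\rangle\iff w\in W^{(i)}$'' furnished by Lemma \ref{lem:weyl_difference}, on the parabolic intersection formula, and on the observation that the supporting inequalities for distinct indices $i$ can be imposed simultaneously. Everything else is a routine dimension count together with the $W$-equivariance of the facial structure.
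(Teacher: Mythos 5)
Your proposal is correct and is essentially the paper's own argument: the paper derives the corollary from Proposition \ref{prop:maxface} precisely by noting that the set of exposed faces is $W$-invariant and closed under intersections, and your identification $F_J=\bigcap_{i\notin J}F^{(i)}$ together with the dimension count just makes that one-line deduction explicit. The only detail worth flagging is the trivial edge case $J=I$, where the intersection is empty and $F_I=\Xi_h$ is the (exposed, by the paper's convention) trivial face.
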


\noindent If $J,J'\subset I$ are two distinct subsets of $I$, then the affine spans of the Weyl group orbits $W_J\cdot h$ and $W_{J'}\cdot h$ do not coincide, which implies that the faces $F_J$ and $F_{J'}$ are also distinct. Hence, $h$ is contained in at least $2^n$ faces of $\Xi_h$. More generally, we have $w(F_J)=w'(F_{J'})$ if and only if $J=J'$ and $w^{-1}w'\in W_J$, so the map $wW_J\mapsto w(F_J)$ defines an order-preserving injection $\Sigma^\ast(W,S)\to\mc{F}(\Xi_h)$. In order to show that this map is also surjective, we need more information on the topology of $\Xi_h$. 

\begin{lem} \label{topology_lem}
	Let $h\in C$ and $h'\in \Xi_h$ be such that $v:=h-h'=\sum_{i\in I} c_i\alpha_i^\vee$ satisfies $c_i>0$ for all $i\in I$. Then the open line segment $(h,h')$ is contained in the relative interior of $\Xi_h\subset E_h$.
\end{lem}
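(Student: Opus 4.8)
The plan is to exhibit a single point $p_0$ in the relative interior of $\Xi_h$ lying on the open segment $(h,h')$, and then to spread interiority over the whole segment by the standard line-segment principle for convex sets (cf. \cite[Theorem 6.1]{Rockafellar}): if $p_0\in\operatorname{relint}(\Xi_h)$ and $q\in\Xi_h$, then $[p_0,q)\subset\operatorname{relint}(\Xi_h)$. Applying this once with $q=h$ and once with $q=h'$ covers $(h,p_0]$ and $[p_0,h')$, whose union is exactly $(h,h')$. Note that this step does not require $\Xi_h$ to be closed, which is convenient since closedness is only established later in the section.

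To locate $p_0$ I would work with the $n$-simplex $\Delta:=\operatorname{conv}(h,s_1(h),\ldots,s_n(h))$. As already observed at the start of Section~\ref{regularOrbit}, its $n+1$ vertices are affinely independent, so $\operatorname{aff}(\Delta)=E_h=\operatorname{aff}(\Xi_h)$. Consequently $\operatorname{relint}(\Delta)$ is relatively open in $E_h$ and contained in $\Xi_h$; since $\dim(\Xi_h)=n=\dim(E_h)$, the relative interior of $\Xi_h$ is just its interior in $E_h$, and therefore $\operatorname{relint}(\Delta)\subset\operatorname{relint}(\Xi_h)$. Moreover, $\operatorname{relint}(\Delta)$ is precisely the set of convex combinations of the vertices with all barycentric coefficients strictly positive, a standard fact about simplices (cf. Section~\ref{ConvexGeom}).

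The key computation is to rewrite a point of $(h,h')$ close to $h$ as such a strictly positive combination. Since $s_i(h)=h-\langle\alpha_i,h\rangle\,\alpha_i^\vee$ and $b_i:=\langle\alpha_i,h\rangle>0$ for $h\in C$, we have $\alpha_i^\vee=\tfrac{1}{b_i}\bigl(h-s_i(h)\bigr)$, so for $\varepsilon>0$
\[
  h-\varepsilon v=h-\varepsilon\sum_{i\in I}\frac{c_i}{b_i}\bigl(h-s_i(h)\bigr)=\Bigl(1-\varepsilon\sum_{i\in I}\tfrac{c_i}{b_i}\Bigr)h+\sum_{i\in I}\varepsilon\tfrac{c_i}{b_i}\,s_i(h).
\]
Because every $c_i>0$ and every $b_i>0$ by hypothesis, each coefficient of $s_i(h)$ is positive, and for $\varepsilon$ small enough (say $\varepsilon<\min\{1,(\sum_{i}c_i/b_i)^{-1}\}$) the coefficient of $h$ is positive as well. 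Hence $p_0:=h-\varepsilon v\in\operatorname{relint}(\Delta)\subset\operatorname{relint}(\Xi_h)$, while simultaneously $p_0=(1-\varepsilon)h+\varepsilon h'\in(h,h')$ since $h'=h-v$. This supplies the required interior point, and the line-segment principle then concludes the argument.

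The part that genuinely uses the hypothesis is the middle computation: the assumption that all $c_i$ are strictly positive is exactly what guarantees that the direction $h'-h=-v$ points into the relative interior of the simplicial cone of feasible directions of $\Xi_h$ at the vertex $h$, which is spanned by the directions $s_i(h)-h=-b_i\alpha_i^\vee$. I expect no serious obstacle beyond bookkeeping this positivity; the only external inputs are the description of the relative interior of a simplex and the line-segment principle, both available from Section~\ref{ConvexGeom} and \cite{Rockafellar}.
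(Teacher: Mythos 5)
Your proof is correct. It shares the key algebraic observation with the paper's argument --- namely that the directions $s_i(h)-h=-\langle\alpha_i,h\rangle\alpha_i^\vee$ span the cone of feasible directions at $h$, so that strict positivity of all $c_i$ pushes $h-\varepsilon v$ into the interior --- but the execution is genuinely different. The paper fixes an arbitrary point $x=h-\lambda v$ on the open segment and builds, for that specific $x$, an explicit box neighbourhood $x+\prod_{i\in I}[-\mu_i,\mu_i']\alpha_i^\vee$ inside $\Xi_h$, obtaining each corner direction $x-\mu_i\alpha_i^\vee$ as a convex combination of the three points $h$, $h'$ and $s_i(h)$; this is entirely self-contained but has to be carried out for every point of the segment. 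You instead produce a single interior point $p_0=h-\varepsilon v$ by writing it as a strictly positive barycentric combination of the full-dimensional simplex $\conv(h,s_1(h),\ldots,s_n(h))$, and then propagate interiority along the segment via the line-segment principle \cite[Theorem 6.1]{Rockafellar}. Your route is shorter and leans on a standard external fact; the paper's route avoids that citation and, as a by-product, produces the explicit cube neighbourhoods that are reused later (in Claim 3 of the proof of Proposition \ref{prop:closedHull}, where the argument needs to know that the neighbourhood of $x$ is generated only by $h$, $h'$ and the points $s_i(h)$, so that it stays inside a truncation $X_k$). Your remark that the line-segment principle needs only $h,h'\in\Xi_h\subset\operatorname{cl}(\Xi_h)$, so that closedness of $\Xi_h$ is not presupposed, is accurate and important, since closedness is only proved afterwards. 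All steps check out: the coefficients in your convex combination sum to $1$, they are all positive for small $\varepsilon$, and $[p_0,h)\cup[p_0,h')=(h,h')$.
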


\begin{proof}
	Choose any $\lambda\in (0,1)$ and set $x:=h-\lambda v\in \Xi_h$. First, we show that for every $i\in I$ there exists some $\mu_i>0$ such that $x-\mu_i\alpha_i^\vee\in\Xi_h$ by writing this point as a convex combination of $\{h,h',s_i(h)\}$. To this end, choose $\mu_i>0$ such that $c:=\frac{\mu_i}{\langle \alpha_i,h\rangle}<1-\lambda$, set $b:=\lambda$ and $a:=1-b-c$. Then we have $a,b,c\in (0,1)$ and $a+b+c=1$. Moreover, the following computation shows that the corresponding convex combination is the desired point:
	\begin{align*}
		ah+bh'+cs_i(h) &= ah+bh+b(h'-h)+c(h-\langle \alpha_i,h\rangle\alpha_i^\vee) \\
		&= (a+b+c)h-bv-c\langle \alpha_i,h\rangle\alpha_i^\vee \\
		&= h-\lambda v-\mu_i\alpha_i^\vee=x-\mu_i\alpha_i^\vee
	\end{align*}
	On the other hand, $\Xi_h$ also contains $x+\mu v$ for all $0\leq \mu\leq \lambda$. Since $\lambda>0$ and $c_i>0$ for all $i\in I$, it follows that for every $i\in I$, $\Xi_h$ also contains $x+\mu_i'\alpha_i^\vee$ for sufficiently small $\mu_i'>0$. Altogether, we obtain that $\Xi_h$ contains the cube
	$$x+\prod_{i\in I} [-\mu_i,\mu_i']\alpha_i^\vee\subset E_h.$$
	Since the simple roots form a basis of $\a'$, this set has non-empty relative interior in $E_h$, so $x$ lies in the relative interior of $\Xi_h\subset E_h$.
\end{proof}

\begin{prop} \label{prop:closedHull}
	If $h\in C$, then $\Xi_h$ is closed.
\end{prop}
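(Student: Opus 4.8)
The plan is to show that the relative boundary of the closure $\overline{\Xi_h}$ is already contained in $\Xi_h$; since the relative interior of a convex set lies in the set itself, this yields $\overline{\Xi_h}=\Xi_h$. First I record that $\operatorname{relint}(\Xi_h)$ is nonempty: taking $h':=\frac{1}{n+1}\big(h+\sum_{i\in I}s_i(h)\big)$ we have $h-h'=\frac{1}{n+1}\sum_{i\in I}\langle\alpha_i,h\rangle\alpha_i^\vee$ with all coefficients positive because $h\in C$, so Lemma~\ref{topology_lem} places $h'$ in $\operatorname{relint}(\Xi_h)$. Recall also that $\dim\Xi_h=n$ with affine hull $E_h$, and that $\operatorname{relint}(\overline{\Xi_h})=\operatorname{relint}(\Xi_h)$ for any convex set, so it indeed suffices to treat points on the relative boundary of $\overline{\Xi_h}$.

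So let $p$ be a relative boundary point of $\overline{\Xi_h}$. Then $p$ lies on a supporting hyperplane of $\overline{\Xi_h}$ inside $E_h$, i.e.\ there is $0\neq\psi\in(\a')^\ast$ with $\langle\psi,p\rangle=M:=\max_{h'\in\overline{\Xi_h}}\langle\psi,h'\rangle$, and this maximum equals $\sup_{w\in W}\langle\psi,w(h)\rangle$ since a linear functional attains its supremum over a convex hull on the generating set. Because $\overline{\Xi_h}$ is $W$-invariant, the whole $W$-orbit of $\psi$ defines supporting functionals at the same level $M$. The decisive step is to move $\psi$ by $W$ into the dominant cone, that is, to arrange $\langle\psi,\alpha_i^\vee\rangle\geq 0$ for all $i\in I$. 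Granting this, \eqref{fundamentalChamber} gives $\langle\psi,h\rangle-\langle\psi,w(h)\rangle=\sum_{i} c_i(w)\langle\psi,\alpha_i^\vee\rangle\geq 0$ for every $w\in W$, so $M=\langle\psi,h\rangle$ is attained at $w=1$ and, by Lemma~\ref{lem:weyl_difference}, the orbit points realizing $M$ are exactly those $w(h)$ with $\operatorname{supp}(w)\subseteq J:=\{i\in I:\langle\psi,\alpha_i^\vee\rangle=0\}$. Hence the exposed face of $\overline{\Xi_h}$ cut out by $\psi$ equals $\overline{F_J}$ with $F_J=\operatorname{conv}(W_J\cdot h)$ as in Corollary~\ref{cor:CoxeterFaces}, and $J\subsetneq I$ because $\psi\neq 0$ on $\a'$.

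To finish I would run an induction on the rank $n$. The face $F_J$ is the convex hull of the orbit of the regular element $h$ under the standard parabolic subgroup $W_J$, which is the Weyl group of the symmetrizable sub-GCM indexed by $J$; the corresponding sub-root datum is again free and cofree and $h$ is strictly dominant for it, so by the induction hypothesis (the case $|J|<n$, with the base cases $n\leq 1$ being a point or a segment) $F_J$ is closed. Therefore $p\in\overline{F_J}=F_J\subseteq\Xi_h$, as desired. The main obstacle is precisely the reduction in the previous paragraph: showing that every supporting functional of $\overline{\Xi_h}$ can be brought into the dominant cone by $W$, equivalently that the normal directions of $\overline{\Xi_h}$ fill out exactly the (dual) Tits cone and that no relative boundary point escapes to infinity along a direction whose supremum over the orbit is not attained. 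This is exactly where regularity of $h$ is essential --- it guarantees that $W\cdot h$ is a genuinely $n$-dimensional orbit with trivial stabilizers --- and it is controlled using the strict fundamental-domain property of $\overline{C}$ together with the proper discontinuity of the $W$-action on the interior of the Tits cone, which prevents finite accumulation of the orbit and forces the relevant maxima to be attained.
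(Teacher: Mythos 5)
Your overall strategy (every relative boundary point of $\overline{\Xi_h}$ lies on a supporting hyperplane, which by $W$-symmetry should cut out a face of the form $\overline{F_J}$, closed by induction on rank) is attractive, but it has a genuine gap at exactly the point you flag as ``the main obstacle'', and that obstacle is essentially the whole content of the proposition. You need that every supporting functional $\psi$ of $\overline{\Xi_h}$ can be moved by $W$ into the dominant cone \emph{and} that its supremum over the orbit $W\cdot h$ is attained at some $w(h)$. When $W$ is infinite, the $W$-translates of the dominant cone fill out only the (dual) Tits cone, which is a proper subset of $(\a')^\ast$; a priori a boundary point of $\overline{\Xi_h}$ could be supported only by functionals outside that cone, for which $\sup_{w\in W}\langle\psi,w(h)\rangle$ is finite but not attained. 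Ruling this out is precisely the statement that no points of $\overline{\Xi_h}$ ``escape to infinity'', i.e.\ that $\Xi_h$ is closed, so the appeal to proper discontinuity of the $W$-action on the Tits cone (which concerns the action on points, not on supporting functionals) leaves the argument circular. There is a second, independent gap: even when $\sup_w\langle\psi,w(h)\rangle=M$ is attained exactly on $W_J\cdot h$, the exposed face $\overline{\Xi_h}\cap\{\psi=M\}$ need not equal $\overline{\conv(W_J\cdot h)}$. For a convex hull of an infinite set the face of the \emph{closure} can strictly contain the closure of the convex hull of the generators lying at level $M$ (take $Y=\{(0,1)\}\cup\{(n,1-\tfrac1n):n\in\N\}$ in $\R^2$ and $\psi=$ the second coordinate: the face of $\overline{\conv(Y)}$ at level $1$ is a whole ray, while $Y\cap\{\psi=1\}$ is a single point). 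So the step $p\in\overline{F_J}=F_J$ does not follow from what you have established.

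The paper avoids both issues by a quite different, more quantitative argument: it filters $\Xi_h$ by the compact polytopes $X_k=\conv(\{w\in W: l(w)\le k\}\cdot h)$ and uses the single functional $\omega=\sum_{i\in I}\omega_i$, which decreases by at least a fixed $c>0$ along each length-increasing step in $W$. This yields that points of $\Xi_h\setminus X_{k'}$ are uniformly $\ell^1$-far from $X_k$ once $k'\gg k$, so every bounded (in particular every convergent) sequence in $\Xi_h$ is trapped in a single compact $X_{k'}$; closedness follows at once. If you want to salvage your approach, the missing ingredient is precisely a properness statement of this kind for the orbit $W\cdot h$ (that $\langle\omega,w(h)\rangle\to-\infty$ uniformly as $l(w)\to\infty$), and once you have it you may as well run the paper's argument directly.
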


\begin{proof}
	If $W$ is finite, then $\Xi_h$ is a convex polytope and hence compact. Thus, for the rest of the proof, we may assume that $|W|=\infty$. For every $k\in\N_0$, we set 
	$$U_k:=\{w\in W: l(w)\leq k\} \qquad \text{and} \qquad X_k:=\op{conv}(U_k\cdot h).$$
	Since $U_k$ is finite, $X_k$ is a convex polytope and hence compact. Moreover, these polytopes yield a filtration of $\Xi_h$ in the sense that $X_k\subset X_{k+1}$ for all $k\in\N_0$ and $\Xi_h=\bigcup_{k=0}^\infty X_k$. Let $(y_m)_{m=1}^\infty$ be a sequence in $\Xi_h$ converging to some $y\in E_h$. We are going to show that there exists some $k'\in\N_0$ such that $y_m\in X_{k'}$ for all $m\in \N$. Once this is established, we may conclude that the sequence $(y_m)_{m=1}^\infty$ is contained in the compact set $X_{k'}$, so that $y\in X_{k'}\subset\Xi_h$. The proof of this assertion will be reduced to a sequence of claims. Before we formulate them, we need to introduce some notation. Throughout this proof, we will identify $E_h$ with $\R^n$ by moving $h$ to $0$ and mapping $\Phi_0^\vee$ to the standard basis of $\R^n$ in the natural order. For every $i\in I$, $\omega_i-\langle\omega_i,h\rangle$ corresponds to the $i$-th coordinate functional under this identification. Thus, if we set $\omega:=\sum_{i\in I} \omega_i$, then
		\begin{equation}
			|\langle \omega,p \rangle-\langle \omega,q\rangle|=\bigg| \sum_{i=1}^n (p_i-q_i)\bigg|\leq \sum_{i=1}^n |p_i-q_i|=||p-q||_1, \qquad p,q\in E_h\cong\R^n, \label{l1Norm}
		\end{equation}
	where $||\cdot||_1$ denotes the $\ell^1$-norm on $\R^n$. Since all norms on a finite-dimensional vector space are equivalent, the $\ell^1$-topology on $\R^n$ coincides with the standard Euclidean topology. Next, set $U_{-1}:=\emptyset$ and define for every $k\in\N_0$
	\begin{align*}
		V_k & := U_k\setminus U_{k-1} = \{w\in W:l(w)=k\}, \\
		m_+(k) &:= \max_{w\in V_k} \langle\omega,w(h) \rangle, \\
		m_-(k) &:= \min_{w\in V_k} \langle\omega,w(h) \rangle, \\
		d(k) &:= m_+(k)-m_-(k).
	\end{align*}
	Since we assume that $W$ is infinite, we have $V_k\neq\emptyset$ for all $k\in\N_0$. We set $c:=\min_{i\in I} \langle \alpha_i,h \rangle>0$.
	\medskip \newline
	\textbf{Claim 1:} For every $w\in W$ and $i\in I$ with $l(w)<l(s_iw)$, we have $\langle \omega,s_iw(h) \rangle\leq \langle \omega,w(h) \rangle-c$.
	\medskip \newline
	\textit{Proof of Claim 1:} By \eqref{PositivityCriterion}, we have $\langle \alpha_i,w(h)\rangle=\langle w^{-1}(\alpha_i),h\rangle \geq c$ if $l(w)<l(s_iw)$, so that
		$$\langle \omega,s_iw(h) \rangle=\langle \omega,w(h) \rangle-\langle \alpha_i,w(h)\rangle\langle \omega,\alpha_i^\vee \rangle=\langle \omega,w(h) \rangle-\langle \alpha_i,w(h)\rangle\leq \langle \omega,w(h) \rangle-c.$$
	\textbf{Claim 2:} For every $k\in\N_0$ there exists some $k'\in\N$ such that $m_+(k')<m_-(k)$.
	\medskip \newline 
	\textit{Proof of Claim 2:} Let $k'\geq k$ and $w\in V_{k'}$ be arbitrary and write $w=w_1w_2$ with $l(w_1)=k'-k$ and $l(w_2)=k$. Iterating the computation in Claim 1 shows that
	$$\langle \omega,w(h)\rangle\leq \langle \omega,w_2(h)\rangle-(k'-k)c\leq m_+(k)-(k'-k)c=m_-(k)+d(k)-(k'-k)c.$$
	Since the right-hand side does not depend on $w$, we conclude that 
	$$m_+(k')\leq m_-(k)+d(k)-(k'-k)c.$$ 
	If we choose $k'$ so large that $(k'-k)c>d(k)$, the claim is satisfied.
	\medskip \newline
	\textbf{Claim 3:} For every $k\in\N_0$, every proper face $F$ of $X_k$ is either contained in a proper face of $\Xi_h$ or it is of the form $F=\op{conv}(U\cdot h)$ for a subset $U\subset V_k$.
	\medskip \newline
	\textit{Proof of Claim 3:} The statement is trivial if $k=0$ since $X_0=\{h\}$ is a vertex of $\Xi_h$. Thus, we may assume that $k\geq 1$. Since $X_k=\conv(U_k\cdot h)$ is a convex polytope, every face is exposed. Hence, $F$ is of the form $F=\op{conv}(U\cdot h)$ for some proper subset $U\subsetneq U_k$. If $U$ is contained in a non-trivial standard coset of $W$, i.e. $U\subset wW_J$ for some $w\in W$ and $J\subsetneq I$, then $F$ is contained in the proper face $w(F_J)=\op{conv}(wW_J\cdot h)$ of $\Xi_h$. Otherwise, let $u\in U$ be arbitrary, then for every $i\in I$, there exists some $u_i\in U$ such that $u_i\notin uW^{(i)}$. Equivalently, $u^{-1}u_i\notin W^{(i)}$ and hence $s_i\in S(u^{-1}u_i)$. Now Lemma \ref{lem:weyl_difference} implies that $h-u^{-1}u_i(h)$ is a sum of simple coroots in which all coefficients are non-negative and the coefficient of $\alpha_i^\vee$ is non-zero. Now consider the point
	$$h':=\sum_{i=1}^n \frac{1}{n} u^{-1}u_i(h)=u^{-1}\bigg(\sum_{i=1}^n \frac{1}{n}u_i(h)\bigg)\in u^{-1}(F).$$
	Then we have
	$$h-h'=\sum_{i=1}^n \frac{1}{n}(h-u^{-1}u_i(h)),$$
	so that $h-h'$ is a sum of simple coroots in which all coefficients are positive. By Lemma \ref{topology_lem}, the open line segment $(h,h')$ is contained in the relative interior of $\Xi_h$. More precisely, if $l(u)<k$, the proof of Lemma \ref{topology_lem} actually shows that this line runs through the relative interior of $u^{-1}(X_k)$. Indeed, observe that in that proof we have only used $h$, $h'$ and the simple reflections to create open neighbourhoods and $s_i(h)=u^{-1}us_i(h)\in u^{-1}(X_k)$ for every $i\in I$ since $l(us_i)\leq l(u)+1\leq k$. But then the line segment $(u(h),u(h'))\subset F$ is contained in the relative interior of $X_k$, contradicting the assumption that $F$ is a proper face of $X_k$. Since $u\in U$ was arbitrary, $F$ can only be a face if $l(u)=k$ for all $u\in U$, i.e. $U\subset V_k$.
	\medskip \newline
	\textbf{Claim 4:} If $q\in\Xi_h\setminus X_k$ for some $k\in\N_0$, then $\langle\omega,q\rangle<m_+(k)$.
	\medskip \newline
	\textit{Proof of Claim 4:} The claim is clear if $k=0$ since $X_0=\{h\}$ and every point $q\in\Xi_h\setminus\{h\}$ satisfies $\langle\omega,q\rangle<\langle\omega,h\rangle=m_+(0)$, so we may assume that $k\geq 1$. Consider the line segment $L:=[h,q]\subset\Xi_h$ connecting $h$ and $q$. Since $h\in X_0\subset X_k$ and $q\notin X_k$, there exists a supporting hyperplane $E$ for $X_k$ such that $q$ and $h$ are not contained in the same closed half-space defined by $E$ and $L$ intersects the corresponding face $F:=X_k\cap E$. If $F$ was contained in a face of $\Xi_h$, then we could choose $E$ to be a supporting hyperplane for $\Xi_h$, but then $q\in\Xi_h$ would imply that $q$ and $h$ were both contained in the same closed half-space. Thus, $F$ is of the form $F=\op{conv}(U\cdot h)$ for some $U\subset V_k$ by Claim 3. As $k\geq 1$ and $q\notin F$, this face neither contains $h$ nor $q$, so $h$ and $q$ lie in different open half-spaces defined by $E$ and the line $L$ intersects $F$ in an interior point $x\in (h,q)$. Since $\langle\omega,u(h)\rangle\leq m_+(k)$ for all $u\in U$ and $x$ is a convex combination of these vertices, we have $\langle\omega,x\rangle\leq m_+(k)$. In addition, since $k\geq 1$, we also have $m_+(k)<\langle \omega,h \rangle$. Hence, $\omega$ is strictly monotonically decreasing along $L$. As $x$ lies strictly between $h$ and $q$, this yields that $\langle\omega,q\rangle<\langle\omega,x\rangle\leq m_+(k)$.
	\medskip \newline	
	\textbf{Claim 5:} Let $k\in\N_0$ and choose $k'>k$ such that $\delta:=m_-(k)-m_+(k')>0$. Then $||p-q||_1> \delta$ for all $p\in X_k$ and $q\in \Xi_h\setminus X_{k'}$.
	\medskip \newline 
	\textit{Proof of Claim 5:} On the one hand, $p\in X_k$ is a convex combination of $W$-translates of $h$ of length at most $k$. The value of $\omega$ on each of these vertices is at least $m_-(k)$ since $m_-$ is monotonically decreasing by Claim 1. This implies $\langle\omega,p\rangle\geq m_-(k)$. On the other hand, we have $\langle\omega,q\rangle<m_+(k')$ by Claim 4, so \eqref{l1Norm} implies that
	$$||p-q||_1\geq |\langle \omega,p \rangle-\langle \omega,q \rangle|>m_-(k)-m_+(k')=\delta.$$
	\textit{Conclusion of the proof:} Since a convergent sequence is bounded, we find $R>0$ such that $(y_m)_{m=1}^\infty$ is contained in a closed ball of radius $R$ (w.r.t. the $\ell^1$-norm) around $y_1$. In addition, since for every $m\in\N$, $y_m$ is a finite convex combination of $W$-translates of $h$, there exists some $k(m)\in\N_0$ such that $y_m\in X_{k(m)}$. Set $k:=k(1)$ and choose $k'>k$ such that $\delta:=m_-(k)-m_+(k')\geq R$. Since $||y_1-y_m||_1\leq R$ for all $m\in\N$, Claim 5 implies that the sequence $(y_m)_{m=1}^\infty$ must be contained in $X_{k'}$, as required.
\end{proof}

\noindent With these topological results at hand, we are now in a position to show that the exposed faces of $\Xi_h$ that we constructed so far actually make up the entire face poset.

\begin{prop} \label{prop:exposedFaces}
	If $h\in C$ and $F$ is a face of $\Xi_h$, then $F=w(F_J)$ for some $w\in W$ and a unique subset $J\subset I$. In particular, every face of $\Xi_h$ is exposed.
\end{prop}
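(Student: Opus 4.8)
The plan is to prove, by induction on the rank $n=|I|$, that the order-preserving injection $\Sigma^\ast(W,S)\to\mc{F}(\Xi_h)$, $wW_J\mapsto w(F_J)$ from Corollary \ref{cor:CoxeterFaces} is surjective; uniqueness of $J$ is then automatic, since $\dim w(F_J)=|J|$ and $w(F_J)=w'(F_{J'})$ forces $J=J'$. I would first treat exposed faces and afterwards deduce the general case. For an exposed face $G$ we may, exactly as explained in the paragraph preceding Lemma \ref{lem:weyl_difference}, write $G=\conv(W'\cdot h)$ with $W'\subseteq W$ and, after translating by an element of $W'$, assume $h\in G$.

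Assuming $h\in G$, set $J:=\{i\in I: G\not\subseteq H^{(i)}\}$. For $i\notin J$ we have $G\subseteq \Xi_h\cap H^{(i)}=F^{(i)}$, and since $W\cdot h\cap H^{(i)}=W^{(i)}\cdot h$ by Proposition \ref{prop:maxface}, the intersection $\bigcap_{i\notin J}F^{(i)}$ is the exposed face $\conv\big(\bigcap_{i\notin J}W^{(i)}\cdot h\big)=\conv(W_J\cdot h)=F_J$, using $\bigcap_{i\notin J}W^{(i)}=W_J$. Hence $G\subseteq F_J$. If $J\subsetneq I$, then $F_J=\conv(W_J\cdot h)$ is precisely the analogue of $\Xi_h$ attached to the regular element $h$ and the standard subsystem $(W_J,S_J)$ of rank $|J|<n$; since $G$ is an exposed face of $\Xi_h$ contained in $F_J$ it is also exposed in $F_J$, so the induction hypothesis gives $G=F_{J'}$ for some $J'\subseteq J$, and the defining property of $J$ (namely $G\not\subseteq H^{(i)}$ for $i\in J$) forces $J'=J$, i.e.\ $G=F_J$.

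The remaining case $J=I$ is the heart of the argument. Here $\langle\omega_i,\cdot\rangle$ is non-constant on $G$ for every $i$; as $h\in G$ realizes the maximum $\langle\omega_i,h\rangle$ of $\langle\omega_i,\cdot\rangle$ over $\Xi_h$ (by \eqref{fundamentalChamber}, since $\langle\omega_i,h-w(h)\rangle\geq 0$), there is a point $p_i\in G$ with $\langle\omega_i,p_i\rangle<\langle\omega_i,h\rangle$. The point $h':=\tfrac1n\sum_{i} p_i\in G$ then satisfies $h-h'=\sum_k c_k\alpha_k^\vee$ with every coefficient $c_k=\langle\omega_k,h-h'\rangle>0$, so Lemma \ref{topology_lem} places the open segment $(h,h')\subseteq G$ in the relative interior of $\Xi_h$. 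A proper face cannot meet the relative interior, whence $G=\Xi_h=F_I$. I expect this step, which relies essentially on the closedness of $\Xi_h$ (Proposition \ref{prop:closedHull}) together with the interior estimate of Lemma \ref{topology_lem}, to be the main obstacle.

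Finally, let $F$ be an arbitrary face. If $F=\Xi_h$ it is exposed; otherwise, by the separation fact recalled in Section \ref{ConvexGeom}, $F$ is contained in a proper exposed face, which by the previous paragraphs equals some $w(F_J)$ with $|J|<n$. Then $F$ is a face of $w(F_J)$, which is $W$-congruent to the rank-$|J|$ object $F_J$, so the induction hypothesis applied to $(W_J,S_J)$ yields $F=w\big(w'(F_{J'})\big)$ for suitable $w'\in W_J$ and $J'\subseteq J$; by Corollary \ref{cor:CoxeterFaces} any such set is an exposed face of $\Xi_h$. Hence every face is exposed and of the form $w(F_J)$ for a unique subset $J\subseteq I$, completing the induction and the proof.
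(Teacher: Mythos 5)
Your argument is correct and follows essentially the same route as the paper: an induction on the rank in which the key step shows, via Lemma \ref{topology_lem}, that an exposed face through $h$ not contained in any $H^{(i)}$ contains a segment $(h,h')$ with $h-h'$ having strictly positive coefficients and hence meets the relative interior of $\Xi_h$, after which one descends to a standard subsystem and applies the induction hypothesis. The only cosmetic differences are that you classify exposed faces first and pass directly to $F_J$ rather than to a single facet $F^{(i)}$, and that you produce the point $h'$ from arbitrary points $p_i\in G$ with $\langle\omega_i,p_i\rangle<\langle\omega_i,h\rangle$ where the paper uses vertices $w_i(h)$ with $i\in\op{supp}(w_i)$ together with Lemma \ref{lem:weyl_difference}.
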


\begin{proof}
	We have already discussed uniqueness of $J$ after Corollary \ref{cor:CoxeterFaces}, so it remains to show existence. If $F=\Xi_h$, the claim is satisfied if and only if $J=I$ and $w\in W$ is arbitrary. Thus, we may assume that $F$ is a proper face of $\Xi_h$. We will argue by induction on $n=|I|$. If $n=1$, then $\Xi_h$ is the line segment connecting the two distinct points $h$ and $s_1(h)$. The only proper faces of $\Xi_h$ are those two points which have the required description for $J=\emptyset$. \\
	Let now $n>1$ be arbitrary and let $x$ be a point in the relative interior of $F$. As $F$ is a proper face, the point $x$ lies on the boundary of $\Xi_h$ and since $\Xi_h$ is closed, it is contained in a non-trivial supporting hyperplane $E$ of $\Xi_h$. Let $G:=\Xi_h\cap E$ be the corresponding exposed face, then $G$ is also proper and $F\subset G$ because $G$ contains a point from the relative interior of $F$. This exposed face is of the form $G=\op{conv}(W'\cdot h)$ for some proper subset $W'\subset W$ and upon replacing $F$ by $w^{-1}(F)$ and $G$ by $w^{-1}(G)$ for a suitable $w\in W$, we may assume that $1\in W'$ and $h\in G$. We will show that there exists some $i\in I$ such that $W'\subset W^{(i)}$. \\
	Assume for a contradiction that for every $i\in I$, there exists some $w_i\in W'$ with $i\in\op{supp}(w_i)$. Then it follows from Lemma \ref{lem:weyl_difference} that $h-w_i(h)$ is a linear combination of simple coroots in which $\alpha_i^\vee$ occurs with a positive coefficient. Since all other coefficients are non-negative, we may form a suitable convex combination of these points to deduce that there exists a point $y\in G$ such that $h-y=\sum_{i\in I} c_i\alpha_i^\vee$ with $c_i>0$ for all $i\in I$. Now Lemma \ref{topology_lem} implies that the line connecting $h$ and $y$, which is contained in $G$, runs through the interior of $\Xi_h$, a contradiction. \\
	Hence, we must have $W'\subset W^{(i)}$ for some $i\in I$, which implies that $G$ is contained in the facet $F^{(i)}$. The same is then true for $F$, so $F$ is a face of $F^{(i)}$. The induction hypothesis now implies that there exists a unique (not necessarily proper) subset $J\subset I\setminus\{i\}$ and some $w\in W^{(i)}$ such that $F=w(F_J)=\op{conv}(wW_J\cdot h)$. Since we have already shown that every face of this form is exposed, the proof is finished.
\end{proof}

\noindent The following theorem is now an immediate consequence of Corollary \ref{cor:CoxeterFaces} and Proposition \ref{prop:exposedFaces} and provides a completely combinatorial description of the face poset of $\Xi_h$. 

\begin{thm} \label{thm:dualcoxeter_iso}
	If $h\in C$, then the faces of $\Xi_h$ correspond bijectively to the standard cosets of $(W,S)$. More precisely, for every standard coset $wW_J\subset W$ with $w\in W$ and $J\subset I$, the set $w(F_J)=\conv(wW_J \cdot h)$ is a face of $\Xi_h$ of dimension $|J|$ and, conversely, every face is of this form for a unique standard coset of $(W,S)$. In particular, the poset $\mc{F}(\Xi_h)$ is isomorphic to $\Sigma^*(W,S)$.
\end{thm}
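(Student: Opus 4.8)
The plan is to package all the assertions of the theorem into the single map
$\Phi\colon\Sigma^\ast(W,S)\to\mc{F}(\Xi_h)$, $wW_J\mapsto w(F_J)=\conv(wW_J\cdot h)$, and to prove that it is an isomorphism of posets; the closing statement $\mc{F}(\Xi_h)\cong\Sigma^\ast(W,S)$ is then just a reformulation. Most of the ingredients are already available. Corollary \ref{cor:CoxeterFaces} tells us that each $w(F_J)$ really is a face of $\Xi_h$ of dimension $|J|$, so $\Phi$ is well-defined with values in $\mc{F}(\Xi_h)$ and the dimension formula in the statement holds on the nose. The characterization recorded after Corollary \ref{cor:CoxeterFaces}, namely that $w(F_J)=w'(F_{J'})$ holds if and only if $J=J'$ and $w^{-1}w'\in W_J$, which is exactly the condition $wW_J=w'W_{J'}$, gives both well-definedness on cosets and injectivity of $\Phi$. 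Surjectivity is precisely Proposition \ref{prop:exposedFaces}. Thus $\Phi$ is a bijection, and it is order-preserving because $wW_J\subseteq w'W_{J'}$ forces $wW_J\cdot h\subseteq w'W_{J'}\cdot h$ and hence the corresponding inclusion of convex hulls.

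The only remaining point, and the one I would treat as the actual content, is that $\Phi$ also reflects the order: if $w(F_J)\subseteq w'(F_{J'})$, then $wW_J\subseteq w'W_{J'}$. My plan is to reduce this to a statement purely about vertices. Since the translates $\{w(h)\mid w\in W\}$ are pairwise distinct (as $h$ is regular), the assignment $u\mapsto u(h)$ is injective on $W$, so it suffices to show that the vertex set of the face $w(F_J)$ is exactly $wW_J\cdot h$. Granting this, a face inclusion $w(F_J)\subseteq w'(F_{J'})$ forces the inclusion of vertex sets $wW_J\cdot h\subseteq w'W_{J'}\cdot h$, because a vertex of $\Xi_h$ lying in a face is a vertex of that face (the faces of a face of $\Xi_h$ being exactly the faces of $\Xi_h$ contained in it), and injectivity of $u\mapsto u(h)$ then yields $wW_J\subseteq w'W_{J'}$, as desired.

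It remains to identify the vertices of $w(F_J)$, which is the step I expect to be the main obstacle. Translating by $w^{-1}$ reduces this to showing that $u(h)\in F_J=\conv(W_J\cdot h)$ implies $u\in W_J$, the reverse inclusion being clear. Here I would use the functionals $\omega_i$ for $i\notin J$: by the computation in Proposition \ref{prop:maxface}, every vertex $u'(h)$ with $u'\in W_J$ satisfies $\langle\omega_i,u'(h)\rangle=\langle\omega_i,h\rangle$, since $i\notin\op{supp}(u')\subseteq J$, so $F_J$ lies in the hyperplane $H^{(i)}$ and consequently $\langle\omega_i,u(h)\rangle=\langle\omega_i,h\rangle$ for all $i\notin J$. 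On the other hand, Lemma \ref{lem:weyl_difference} writes $h-u(h)=\sum_{k\in I} c_k\alpha_k^\vee$ with $c_k>0$ exactly for $k\in\op{supp}(u)$, and pairing with $\omega_i$ gives $c_i=\langle\omega_i,h-u(h)\rangle=0$ for each $i\notin J$. Hence $\op{supp}(u)\subseteq J$, i.e. $u\in W_J$. This pins down the vertices of $w(F_J)$ as $wW_J\cdot h$ and completes the proof that $\Phi$ is a poset isomorphism; everything outside this vertex computation is bookkeeping over Corollary \ref{cor:CoxeterFaces} and Proposition \ref{prop:exposedFaces}.
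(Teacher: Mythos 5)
Your proposal is correct and follows essentially the same route as the paper, which derives the theorem as an immediate consequence of Corollary \ref{cor:CoxeterFaces} (each $w(F_J)$ is a face of dimension $|J|$, with $w(F_J)=w'(F_{J'})$ iff $wW_J=w'W_{J'}$) together with Proposition \ref{prop:exposedFaces} (every face arises this way). The one thing you add beyond the paper's one-line deduction is the explicit verification that the bijection also reflects the order, via identifying the vertex set of $w(F_J)$ as exactly $wW_J\cdot h$ using Lemma \ref{lem:weyl_difference}; this is a legitimate point that the paper leaves implicit, and your argument for it is sound.
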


\begin{rmk} \label{rmk:CellDecomp}
	This theorem shows that every vertex of $\Xi_h$, i.e. every $W$-translate of $h$, is contained in precisely $2^n$ faces of $\Xi_h$. In particular, every face is only contained in finitely many other faces. As every face is an intersection of sufficiently many facets, this means that every point of $\Xi_h$ is only contained in finitely many defining hyperplanes, so the hyperplane arrangement
	$$\{w(H^{(i)}) \mid w\in W,\, i\in I\},$$
	whose intersection pattern induces the facial structure on $\Xi_h$, is \textit{locally finite}. Note that these hyperplanes are not linear. In the language of Coxeter groups, this is reflected in the fact that for every standard coset $wW_J$, there exist only finitely many standard cosets of $(W,S)$ containing it. In contrast, a face may well contain infinitely many other faces. In fact, for every $J\subset I$, the face poset $\mc{F}(F_J)$ is isomorphic to $\Sigma^\ast(W_J,S_J)$, where $S_J=\{s_j \mid j\in J\}$, which is finite if and only if the standard subgroup $W_J\subset W$ is finite. Since the generating set $S$ is finite, this is the case if and only if the length function of $W$ is bounded on $W_J$. By inspecting the proof of Proposition \ref{prop:closedHull}, this implies that the face $F_J$ is bounded (and hence, compact) if and only if $W_J$ is finite, so the bounded faces of $\Xi_h$ correspond precisely to the finite parabolic subgroups of $W$. Trivially, vertices and edges contain only finitely many faces, so unbounded faces must be at least two-dimensional. In combinatorial terms, this reflects the fact that there is only one Coxeter group of rank $1$, namely the finite group $\Z_2$. \\
	The group $W$ is said to be \textit{k-spherical} for some $2\leq k\leq n$ if every standard subgroup $W_J$ with $|J|\leq k$ is finite. In this case, every face of dimension at most $k$ has only finitely many faces or, equivalently, is bounded. The only infinite Coxeter group of rank $2$ is the infinite dihedral group
	$$D_\infty=\langle s,t \mid s^2=t^2=1 \rangle,$$
	so $W$ is 2-spherical if and only if the order of $s_is_j\in W$ is finite for all $i\neq j\in I$. By \eqref{CoxeterTable}, this is the case if and only if the GCM $\A$ satisfies $a_{ij}a_{ji}\leq 3$ for all $i\neq j\in I$. If $\A$ is of affine type, then $W$ is infinite but every proper standard subgroup of $W$ is finite. In this case, $W$ is $(n-1)$-spherical, so every proper face of $\Xi_h$ is bounded, but $\Xi_h$ itself is not. \\
	This behaviour is in a sense ``dual'' to the cellular structure of the Tits cone. A \textit{cell} of $X$ is any set of the form $w(D_J)\subset X$ for some $w\in W$ and $J\subset I$, where 
		$$D_J:=\{h'\in\a: \langle\alpha_i,h'\rangle=0\,\,\forall i\in J,\,\, \langle\alpha_i,h'\rangle>0\,\,\forall i\in I\setminus J\}.$$
	The sets $\{D_J \mid J\subset I\}$ are precisely the faces of the closed fundamental chamber $\overline{C}$ and for $h'\in w(D_J)$, the stabilizer of $h'$ is the parabolic subgroup $wW_Jw^{-1}\subset W$. In particular, it depends only on the cell $w(D_J)$, so the cell complex of $X$ is isomorphic to the ``usual'' Coxeter complex $\Sigma(W,S)$ where standard cosets are ordered by reversed inclusion. The Tits cone thus provides a geometric realization of $\Sigma(W,S)$. Every cell of $X$ contains only finitely many cells, but may well be contained in infinitely many different ones. Observe that the cellular structure on $X$ is induced by the intersection pattern of the hyperplane arrangement
	$$\{w(\op{ker}(\alpha_i)) \mid w\in W,\, i\in I\}.$$
	In contrast to the situation of $\Xi_h$, all these hyperplanes are linear, which implies that this hyperplane arrangement is locally finite if and only if it is finite or, equivalently, $W$ is finite.
\end{rmk}

\noindent Another useful result, which is a direct consequence of Theorem \ref{thm:dualcoxeter_iso}, is the following. We use standard terminology of Coxeter complexes as described in \cite[Chapter 3]{AB}.

\begin{cor} \label{cor:graphAdjacency}
	If $h\in C$, the underlying graph of $\Xi_h$, i.e. the set of all edges and vertices of $\Xi_h$, is isomorphic to the chamber system of the Coxeter complex $\Sigma(W,S)$. In other words, two vertices $w_1(h)$ and $w_2(h)$ of $\Xi_h$ are joined by an edge if and only if the chambers $\{w_1\}$ and $\{w_2\}$ of $\Sigma(W,S)$ are adjacent, i.e. $w_2=w_1s_i$ for some $i\in I$. Under this isomorphism, the graph distance in $\Xi_h$ corresponds to the chamber distance in $\Sigma(W,S)$ and minimal paths between vertices of $\Xi_h$ are in bijective correspondence with minimal galleries between the corresponding chambers. Equivalently, the underlying graph of $\Xi_h$ is also isomorphic to the Cayley graph of $(W,S)$.
\end{cor}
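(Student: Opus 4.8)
The plan is to read the statement off directly from the dimension-graded structure of the poset isomorphism in Theorem \ref{thm:dualcoxeter_iso}. The underlying graph of $\Xi_h$ is exactly the collection of faces of dimension $0$ and $1$, so I restrict the isomorphism $\mc{F}(\Xi_h)\to\Sigma^\ast(W,S)$, $w(F_J)\mapsto wW_J$, to these two bottom layers and translate the resulting data into graph-theoretic language.

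First I treat the vertices. By Theorem \ref{thm:dualcoxeter_iso} the $0$-dimensional faces are precisely the $w(F_\emptyset)=\{w(h)\}$, corresponding to the singleton cosets $wW_\emptyset=\{w\}$, which are the chambers of $\Sigma(W,S)$. Since $h$ lies in the open chamber $C$, its $W$-translates are pairwise distinct (as noted at the start of Section \ref{regularOrbit}), so $w\mapsto w(h)$ is a bijection from $W$ onto the vertex set of $\Xi_h$. Next I treat the edges. Again by Theorem \ref{thm:dualcoxeter_iso}, the $1$-dimensional faces are the sets $w(F_{\{i\}})=\conv(wW_{\{i\}}\cdot h)$ with $w\in W$ and $i\in I$. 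As $W_{\{i\}}=\{1,s_i\}$, such a face is the segment $[w(h),ws_i(h)]$ and so has exactly the two vertices $w(h)$ and $ws_i(h)$. Hence two vertices $w_1(h)$ and $w_2(h)$ bound a common edge if and only if $\{w_1,w_2\}=\{w,ws_i\}$ for some $w\in W$ and $i\in I$, which (using $s_i^2=1$) is equivalent to $w_2=w_1s_i$ for some $i\in I$. This is precisely the adjacency relation in the chamber system of $\Sigma(W,S)$, and identifying $w(h)$ with $w$ turns the underlying graph of $\Xi_h$ into the Cayley graph of $(W,S)$.

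It then remains to match distances, and here I would invoke standard Coxeter theory rather than reprove it. In the Cayley graph of $(W,S)$ the distance between $w_1$ and $w_2$ equals the word length $l(w_1^{-1}w_2)$, and shortest paths correspond bijectively to reduced decompositions of $w_1^{-1}w_2$; these are exactly the minimal galleries from $\{w_1\}$ to $\{w_2\}$ in $\Sigma(W,S)$ and realize the chamber distance (cf. \cite[Chapter 3]{AB}). Transporting these identifications through the bijection $w\mapsto w(h)$ yields the asserted correspondence between graph distance in $\Xi_h$, chamber distance in $\Sigma(W,S)$, and reduced words.

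The only point requiring a moment of care is confirming that a $1$-face carries no extra vertices and that its two endpoints determine the generating coset $wW_{\{i\}}$ uniquely; both follow immediately from the explicit form $F_{\{i\}}=[h,s_i(h)]$ and the injectivity of $w\mapsto w(h)$. Accordingly, there is no genuine obstacle here beyond this bookkeeping: the corollary is a direct consequence of the face-poset description already obtained.
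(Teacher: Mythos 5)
Your proposal is correct and follows exactly the route the paper intends: the corollary is stated there as a direct consequence of Theorem \ref{thm:dualcoxeter_iso}, and your argument simply makes explicit the restriction of the poset isomorphism $\mc{F}(\Xi_h)\cong\Sigma^\ast(W,S)$ to the faces of dimension $0$ and $1$, together with the standard identification of minimal galleries with reduced words. No gaps; the bookkeeping you flag (a $1$-face $w(F_{\{i\}})=[w(h),ws_i(h)]$ has exactly two vertices and determines the coset $wW_{\{i\}}$) is indeed all that needs checking.
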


\noindent Finally, recall that a closed convex set can be written as the intersection of all supporting half-spaces. As every proper face of $\Xi_h$ is contained in a facet, $\Xi_h$ can also be reconstructed as the intersection of all defining half-spaces. This observation yields the next corollary which will play a crucial role in the proof of Theorem \ref{thm:KMKostant}. 

\begin{cor}\label{cor:inequality1}
	If $h\in C$, then a point $h'\in\a$ is contained in $\Xi_h$ if and only if it satisfies the following (in-)equalities:
		$$\begin{cases}
			\langle \omega_i,w(h')\rangle\leq \langle\omega_i,h \rangle & \text{for all $w\in W$ and $i\in I$} \\
			\langle \phi_i,h'\rangle=\langle\phi_i,h \rangle & \text{for all $i\in \{n+1,\ldots,d\}$}
		\end{cases}$$
\end{cor}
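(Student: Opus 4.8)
The plan is to treat the equality and inequality constraints separately: the equalities cut out the affine span $E_h=h+\a'$ of $\Xi_h$, and the inequalities then carve out $\Xi_h$ inside $E_h$ as an intersection of facet half-spaces. First I would dispose of the equalities. Since $\mc{B}^\ast=\{\omega_1,\dots,\omega_n,\phi_{n+1},\dots,\phi_d\}$ is dual to $\mc{B}=\{\alpha_1^\vee,\dots,\alpha_n^\vee,h_{n+1},\dots,h_d\}$, the functionals $\phi_{n+1},\dots,\phi_d$ have common kernel $\a'=\bigoplus_{i=1}^n\R\alpha_i^\vee$. Hence for $h'\in\a$ one has $\langle\phi_i,h'\rangle=\langle\phi_i,h\rangle$ for all $i\in\{n+1,\dots,d\}$ if and only if $h'-h\in\a'$, i.e. $h'\in E_h$. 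Thus the asserted equalities are exactly the condition $h'\in E_h$, and it remains to characterise membership in $\Xi_h$ among points of $E_h$ by the stated inequalities.

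For the forward implication, suppose $h'\in\Xi_h$. Then $h'\in E_h$, which gives the equalities, and since $\Xi_h$ is $W$-invariant we have $w(h')\in\Xi_h$ for every $w\in W$. The computation in the proof of Proposition \ref{prop:maxface} shows $\langle\omega_i,x\rangle\leq\langle\omega_i,h\rangle$ for every $x\in\Xi_h$ and every $i\in I$; applying this to $x=w(h')$ yields $\langle\omega_i,w(h')\rangle\leq\langle\omega_i,h\rangle$ for all $w\in W$ and $i\in I$, as required.

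For the converse, which I expect to be the main point, suppose $h'$ satisfies all the listed relations, so that $h'\in E_h$ by the above. I would first record that $E_h$ is $W$-invariant: the simple coroots span a $W$-stable subspace $\a'$ (each $s_i$ sends simple coroots to integral combinations of simple coroots), and $w(h)\in W\cdot h\subset E_h$, whence $w(E_h)=w(h)+\a'=E_h$. Consequently, for $v\in W$ and $i\in I$, the facet $v(F^{(i)})$ of $\Xi_h$ furnished by Corollary \ref{cor:CoxeterFaces} lies on the side of its defining hyperplane $v(H^{(i)})$ given by the half-space $\{y\in E_h:\langle\omega_i,v^{-1}(y)\rangle\leq\langle\omega_i,h\rangle\}$, obtained by applying $v$ to the defining half-space $\{x\in E_h:\langle\omega_i,x\rangle\leq\langle\omega_i,h\rangle\}$ of $F^{(i)}$ from Proposition \ref{prop:maxface}. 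Setting $w=v^{-1}$, the inequality indexed by $(w,i)$ in the statement is precisely the condition that $h'$ lie in the defining half-space of the facet $v(F^{(i)})$.

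It remains to invoke the reconstruction of $\Xi_h$. By Proposition \ref{prop:closedHull} the set $\Xi_h$ is closed, and by Theorem \ref{thm:dualcoxeter_iso} its facets are exactly the sets $v(F^{(i)})=v(F_{I\setminus\{i\}})$ for $v\in W$ and $i\in I$, while every proper face is contained in such a facet. As recalled before the statement, a closed convex set all of whose proper faces lie in facets is the intersection of its defining half-spaces; thus $\Xi_h$ equals the intersection over all $v\in W$ and $i\in I$ of the half-spaces $\{y\in E_h:\langle\omega_i,v^{-1}(y)\rangle\leq\langle\omega_i,h\rangle\}$. Since $h'\in E_h$ satisfies every one of these defining inequalities, it lies in the intersection, so $h'\in\Xi_h$. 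The delicate step is this last one: one must ensure that the \emph{facet} half-spaces genuinely suffice to recover $\Xi_h$, rather than merely the larger family of all supporting half-spaces, which is exactly where the combinatorial fact that every proper face sits inside a facet, a direct output of Theorem \ref{thm:dualcoxeter_iso}, enters. If a self-contained justification were wanted, I would fix an interior point $c$ of $\Xi_h$ in $E_h$, run the segment from $c$ toward a putative point $p\in E_h\setminus\Xi_h$, and observe that its exit point from $\Xi_h$ lies in a facet whose defining half-space, by monotonicity of the defining functional along the segment, then excludes $p$.
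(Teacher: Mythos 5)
Your proposal is correct and follows essentially the same route as the paper: the $\phi_i$-equalities encode membership in the affine span $E_h$, and the $\omega_i$-inequalities encode membership in the defining half-spaces of the facets $w^{-1}(F^{(i)})$, which recover $\Xi_h$ because it is closed (Proposition \ref{prop:closedHull}) and every proper face lies in a facet (Theorem \ref{thm:dualcoxeter_iso}). Your added justifications (the $W$-invariance of $E_h$ and the segment argument showing that facet half-spaces alone suffice) are correct elaborations of what the paper recalls from Section \ref{ConvexGeom} immediately before the corollary.
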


\begin{proof}
	Since the set of defining hyperplanes of $\Xi_h$ is $\{w^{-1}(H^{(i)}) \mid w\in W,\, i\in I\}$ and $\Xi_h$ is the intersection of all defining half-spaces, which we view as subsets of $E_h$, $h'$ is contained in $\Xi_h$ if and only if it lies in $E_h$ and satisfies the corresponding inequalities $\langle \omega_i, w(h') \rangle\leq\langle \omega_i, h\rangle$ for all $w\in W$ and $i\in I$. Moreover, the condition $h'\in E_h$ is equivalent to $h-h'\in\a'$ and hence to $\langle \phi_i,h \rangle=\langle \phi_i,h' \rangle$ for all $i\in \{n+1,\ldots,d\}$.
\end{proof}

\noindent It is also not difficult to show that the inequalities in Corollary \ref{cor:inequality1} directly imply that $h'$ lies in the Tits cone $X\subset\a$. Once this is established, it turns out that most inequalities are in a certain sense redundant. In fact, if $h'$ is already contained in $\overline{C}$, then $\langle \omega_i, w(h') \rangle\leq\langle \omega_i, h' \rangle$ for all $w\in W$ and $i\in I$ by \eqref{fundamentalChamber}, so that the inequalities with $w\neq 1$ are implied by those with $w=1$. If $h'\in X$ does not necessarily lie in $\overline{C}$, then $h'$ is contained in $\Xi_h$ if and only if its $W$-translate $h''\in (W\cdot h')\cap\overline{C}$ has this property. As we have just seen, this is the case if and only if it lies in $E_h$ and satisfies $\langle \omega_i,h''\rangle\leq \langle\omega_i,h \rangle$ for all $i\in I$. 
\medskip \newline
We have now fully determined the face poset of $\Xi_h$ and found necessary and sufficient conditions that decide whether a point in $\a$ is contained in $\Xi_h$ or not. Next, we want to study the intersections of $\Xi_h$ with parallel translates of the defining hyperplanes. As we shall see, these intersections are invariant under subgroups of $W$ of rank $n-1$ which will allow us to apply inductive arguments in the proof of Theorem \ref{thm:KMKostant}.
\medskip \newline
For the rest of this section, we fix an arbitrary $i\in I$ and consider parallel translates of the defining hyperplane $H^{(i)}$. Given any point $y\in E_h$, we set $y_i:=\langle \omega_i,y \rangle$. For every $t\in\R$, we define
$$H^{(i)}(t):=\{y\in E_h: y_i=t\},$$
so that $H^{(i)}(t)$ is parallel to $H^{(i)}=H^{(i)}(h_i)$. We have already seen that every point $y\in\Xi_h$ satisfies $y_i\leq h_i$ with equality if and only if $y\in F^{(i)}$. Thus, the intersection
$$P^{(i)}(t):=\Xi_h\cap H^{(i)}(t)$$
can only be non-empty if $t\leq h_i$. The existence of a lower bound depends on whether the index of $W^{(i)}$ in $W$ is finite. This is obviously the case if $W$ is finite. If $W$ is infinite, however, this is only possible in a rather trivial situation.

\begin{thm} \label{thm:FiniteIndexSubgroup}
	Let $(W,S)$ be a Coxeter system, let $I_1,\ldots,I_m\subset I$ be the connected components of the Coxeter diagram of $(W,S)$ and set 
	$$I_\infty:=\bigcup \{I_k \mid \text{$1\leq k\leq m,\, W_{I_k}$ is infinite}\}.$$
	Then $W_{I_\infty}$ is the smallest finite-index standard subgroup of $W$. If $W_J$ is a finite-index standard subgroup of $W$, then $I_\infty\subset J$ and $W_J\cong W_{I_\infty}\times W_{J\setminus I_\infty}$. In particular, if $(W,S)$ is infinite and irreducible, then $W$ does not contain a non-trivial standard subgroup of finite index.
\end{thm}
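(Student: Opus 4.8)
The plan is to reduce the statement to its irreducible core and then to a single lemma about maximal parabolic subgroups. First I would use the direct product decomposition along connected components: since generators belonging to different components of the Coxeter diagram commute, $W=W_{I_1}\times\cdots\times W_{I_m}$ as an internal direct product. For any $J\subseteq I$ we have $J=\bigsqcup_{k=1}^m (J\cap I_k)$ and $W_J=\prod_{k=1}^m W_{J\cap I_k}$, so the index factors as
$$[W:W_J]=\prod_{k=1}^m [W_{I_k}:W_{J\cap I_k}].$$
Consequently $W_J$ has finite index if and only if each factor $[W_{I_k}:W_{J\cap I_k}]$ is finite. If $W_{I_k}$ is finite this is automatic, so the whole theorem hinges on the following claim for the infinite components.

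\emph{Key Lemma:} if $(W',S')$ is irreducible and infinite and $J'\subsetneq S'$ is proper, then $[W':W'_{J'}]=\infty$. To prove it I would first reduce to the corank-one case $J'=S'\setminus\{i\}$, since enlarging $J'$ only decreases the index; it thus suffices to rule out finite index for maximal proper parabolics. If $W'_{J'}$ happens to be finite, infinite index is immediate because $W'$ is infinite. The substantial case is when $W'_{J'}$ is infinite, which I would treat with the geometric (Tits) representation of $(W',S')$ on $V=\bigoplus_{s\in S'}\R\alpha_s$, preserving the Coxeter form $B$ (normalised so that $B(\alpha_s,\alpha_s)=1$), together with the dual Tits cone $\mathcal T\subseteq V^*$. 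The stabiliser of a point $x$ in the relative interior of the wall of type $S'\setminus\{i\}$ (so $\langle\alpha_k,x\rangle=0$ for $k\neq i$ and $\langle\alpha_i,x\rangle=1$) is exactly $W'_{J'}$, whence $[W':W'_{J'}]=|W'\cdot x|$. Assuming this orbit were finite, the vector $c:=\sum_{p\in W'\cdot x}p\in V^*$ is $W'$-fixed; writing out $s(c)=c$ and testing against $\alpha_k$ forces $\langle\alpha_k,c\rangle=0$ for every $k$ (here $B(\alpha_k,\alpha_k)=1\neq0$ is all that is used), so $c=0$. On the other hand $W'\cdot x\subseteq\mathcal T$, and in the present situation $B$ is necessarily indefinite — positive definiteness would make $W'$ finite, and degenerate positive semidefiniteness would make $W'$ affine and hence all its proper parabolics finite, contradicting that $W'_{J'}$ is infinite — so $\mathcal T$ is a pointed convex cone. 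A vanishing sum of elements of a pointed cone forces each summand to vanish, giving $x=0$, a contradiction.

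I expect the pointedness of the Tits cone in the indefinite case to be the main obstacle: it is the one step that is genuinely about the geometry of $(W',S')$ rather than bookkeeping, and it requires first isolating the finite and affine types, where the argument is either vacuous or replaced by the trivial ``finite subgroup of an infinite group'' observation. This is a standard feature of the Tits cone of an infinite irreducible non-affine Coxeter group and may be quoted from the general theory; alternatively the Key Lemma itself is classical and can simply be cited.

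Finally I would assemble the pieces. Combining the product decomposition with the Key Lemma, $[W:W_J]<\infty$ holds if and only if $I_k\subseteq J$ for every infinite component $I_k$, i.e. if and only if $I_\infty\subseteq J$. Taking $J=I_\infty$ shows $W_{I_\infty}$ has finite index, and any finite-index $W_J$ satisfies $W_{I_\infty}\subseteq W_J$, so $W_{I_\infty}$ is the smallest finite-index standard subgroup. When $I_\infty\subseteq J$ the set splits as $J=I_\infty\sqcup(J\setminus I_\infty)$ with no edges between the two parts, because $I_\infty$ is a union of full connected components; hence the corresponding generators commute and $W_J\cong W_{I_\infty}\times W_{J\setminus I_\infty}$. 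The final assertion is the special case $m=1$: either $W$ is finite (and every $J$ qualifies) or $I_\infty=I$, so no proper $J$ yields finite index.
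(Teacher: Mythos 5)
Your argument is correct, but it is genuinely different from what the paper does: the paper does not prove Theorem \ref{thm:FiniteIndexSubgroup} at all, it simply cites \cite[Corollary 3.3 and 3.4]{Hosaka}. Your reduction via the internal direct product $W=W_{I_1}\times\cdots\times W_{I_m}$, the factorisation of the index, and the final assembly are all standard and correct, and your Key Lemma is exactly the irreducible core of Hosaka's result (it goes back at least to Deodhar). Your proof of the Key Lemma is also sound: the reduction to maximal proper parabolics, the identification of $W'_{J'}$ as the stabiliser of a point $x$ on the corresponding wall of the fundamental chamber, the computation showing that a $W'$-fixed vector must pair to zero with every $\alpha_k$ and hence vanish, and the barycentre contradiction are all valid steps. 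You are right that the entire weight rests on the pointedness of the Tits cone in the irreducible indefinite case; do be aware that this fact is essentially equivalent in strength to the Key Lemma itself (the classical characterisation is that $\mathcal{T}\cap-\mathcal{T}$ consists precisely of the points whose stabiliser has finite index), so quoting it is not really ``cheaper'' than quoting the Key Lemma or Hosaka's corollaries directly. It is nevertheless a true and citable statement (for crystallographic $W$ it follows from \cite[Proposition 5.8]{Kac} together with the fact that the positive imaginary roots span; for general Coxeter systems it is part of Dyer's imaginary-cone theory), and your trichotomy correctly isolates the finite and affine cases where the argument is either vacuous or trivial. What your approach buys over the paper's bare citation is a transparent, almost self-contained proof that makes visible exactly where the geometry of the Tits cone enters; what it costs is that the one remaining quoted ingredient is no more elementary than the theorem being proved.
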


\begin{proof}
	\cite[Corollary 3.3 and 3.4]{Hosaka}
\end{proof}

\noindent In the formulation of the following lemma, we use the fact that in every Coxeter system $(W,S)$ with $|W|<\infty$, there exists a unique element $w_+\in W$ such that $l(w_+)$ is maximal.\footnote{In the literature on Coxeter groups, it is customary to denote this element by $w_0$ rather than $w_+$. We do not use that notation on purpose since it will sometimes be convenient to be able to list elements of $W$ as $w_0,w_1,w_2,\ldots$, where the first element should not be confused with $w_+$ if $W$ is finite.} It is of order two and conjugation by $w_+=w_+^{-1}$ preserves the generating set $S$. Hence, it induces an automorphism of the Coxeter diagram of $(W,S)$. It is called the \textit{longest word} of $W$.

\begin{lem} \label{lem:IndexDistinction} \hfill
	\begin{itemize} 
		\item[(i)] If $[W:W^{(i)}]$ is finite, let $w_+$ be the longest word of $W_{I\setminus I_\infty}$ and let $j\in I$ be the unique index such that $s_j=w_+s_iw_+$. Then $P^{(i)}(t)$ is non-empty if and only if $t\in [-h_j,h_i]$.
		\item[(ii)] If $[W:W^{(i)}]$ is infinite, then $P^{(i)}(t)$ is non-empty for all $t\in (-\infty,h_i]$.
	\end{itemize}
\end{lem}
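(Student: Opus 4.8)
The plan is to reduce the whole statement to computing the image of $\Xi_h$ under the linear functional $\langle\omega_i,\cdot\rangle$. Since $\Xi_h$ is convex and, by Proposition \ref{prop:closedHull}, closed, and since $P^{(i)}(t)=\Xi_h\cap H^{(i)}(t)$ is non-empty exactly when $t$ lies in the image $\langle\omega_i,\cdot\rangle(\Xi_h)$, this image is an interval whose upper endpoint is $h_i=\langle\omega_i,h\rangle$, attained at the vertex $h$. Everything then hinges on the infimum $m_i:=\inf_{w\in W}\langle\omega_i,w(h)\rangle$: part (ii) amounts to $m_i=-\infty$, and part (i) to $m_i=-h_j$ with the infimum attained. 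By Lemma \ref{lem:weyl_difference} I may write $h-w(h)=\sum_{k\in I} c_k(w)\alpha_k^\vee$ with $c_k(w)\ge 0$, so that $\langle\omega_i,w(h)\rangle=h_i-c_i(w)$ and the problem becomes the study of the single coefficient $c_i(w)$.

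First I would record two structural facts. Because $s_k(\omega_i)=\omega_i-\delta_{ik}\alpha_i$, the standard subgroup $W^{(i)}=W_{I\setminus\{i\}}$ fixes $\omega_i$, so $\langle\omega_i,w(h)\rangle$ is constant on the right cosets $W^{(i)}w$; consequently it takes only finitely many values exactly when $[W:W^{(i)}]<\infty$. Second, Theorem \ref{thm:FiniteIndexSubgroup} translates the dichotomy into combinatorics: $W^{(i)}$ has finite index if and only if $I_\infty\subset I\setminus\{i\}$, i.e. if and only if $i\notin I_\infty$; equivalently, $[W:W^{(i)}]=\infty$ if and only if the connected component $I_c$ of $i$ in the Coxeter diagram is infinite. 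Moreover, the components orthogonal to $I_c$ fix both $\omega_i$ and the coroots $\alpha_m^\vee$ with $m\in I_c$, so $\langle\omega_i,w(h)\rangle$ depends only on the $W_{I_c}$-component of $w$; this lets me replace $W$ by the standard subgroup generated by the component of $i$.

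For part (ii) I must show that $c_i$ is unbounded on $W_{I_c}$, where $I_c$ is infinite and irreducible. The key is a propagation argument. A direct computation with the reflection formula gives, for every $w$, $c_i(s_iw)=-c_i(w)+\langle\alpha_i,h\rangle+\sum_{k\neq i}|a_{ki}|\,c_k(w)$; since the left-hand side is non-negative and $c_i\ge 0$, boundedness of $c_i$ by some $M$ forces $\sum_{k\neq i}|a_{ki}|\,c_k(w)\le 2M$ for all $w$, whence every neighbour $k$ of $i$ (those with $a_{ki}\neq 0$) also has $c_k$ bounded. As $I_c$ is connected and finite as a set, iterating shows that if $c_i$ were bounded then every $c_k$ with $k\in I_c$ would be bounded, so the orbit $W_{I_c}\cdot h$ would be bounded. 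But then $\conv(W_{I_c}\cdot h)$ would be a compact polytope possessing the infinitely many distinct vertices $\{w(h)\}$ (Corollary \ref{cor:CoxeterFaces} together with Remark \ref{rmk:CellDecomp}, which records that this face is unbounded precisely because $W_{I_c}$ is infinite), an absurdity. Hence $c_i$ is unbounded, $m_i=-\infty$, and $P^{(i)}(t)\neq\emptyset$ for every $t\le h_i$.

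For part (i) the finitely many values guarantee that $m_i$ is attained, so the image is the closed interval $[m_i,h_i]$ and it remains to identify $m_i=-h_j$. Here $i\in I\setminus I_\infty$, and by the reduction above $\langle\omega_i,w(h)\rangle$ depends only on the component of $w$ inside the finite group $W_{I\setminus I_\infty}$, on which $h$ is strictly dominant by \eqref{fundamentalChamber}. The standard fact that for a finite Coxeter group the functional $w\mapsto\langle\omega_i,w(h)\rangle$ is minimised at the longest element then identifies the minimiser as $w_+$, giving $m_i=\langle\omega_i,w_+(h)\rangle=\langle w_+(\omega_i),h\rangle$. The last step, which I expect to be the main obstacle, is to recognise this value as $-h_j$: using $w_+=w_+^{-1}$ and the action of $w_+$ on the coroots one computes $\langle w_+(\omega_i),\alpha_k^\vee\rangle=-\delta_{kj}$ for all $k\in I$, where $j$ is exactly the index with $s_j=w_+s_iw_+$, so that $w_+(\omega_i)=-\omega_j$ and $m_i=-\langle\omega_j,h\rangle=-h_j$. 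The care here lies in the directions of $\a$ lying outside the span of the coroots (the extra torus directions arising when $d>n$): one must verify that $w_+(\omega_i)$ and $-\omega_j$ agree on these as well, which is precisely where the choice of the basis $\mc{B}$ and the orthogonality of the diagram components are used.
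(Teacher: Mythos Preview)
Your overall framework is exactly right and matches the paper: the question reduces to computing the image of $\Xi_h$ under $\langle\omega_i,\cdot\rangle$, which is a closed interval with upper endpoint $h_i$, and the stabiliser computation $\mathrm{Stab}_W(\omega_i)=W^{(i)}$ together with Theorem~\ref{thm:FiniteIndexSubgroup} gives the dichotomy.

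For part (ii) your route is genuinely different from the paper's. The paper argues directly on the orbit $W\cdot\omega_i\subset\a^*$: since $W^{(i)}$ has infinite index this orbit is infinite, and since it lies in $\omega_i-Q_+$ it contains elements $\omega_i-\alpha$ with $\operatorname{ht}(\alpha)$ arbitrarily large; pairing with $h\in C$ then gives $\langle\omega_i,w(h)\rangle=h_i-\langle\alpha,h\rangle\to-\infty$. Your propagation argument (bounded $c_i$ forces bounded $c_k$ for all neighbours $k$, hence bounded orbit, contradicting Remark~\ref{rmk:CellDecomp}) is correct and more hands-on, but longer; the payoff of the paper's approach is that it bypasses the face/boundedness machinery entirely. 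One phrasing issue: ``compact polytope with infinitely many vertices'' is not itself the contradiction---the contradiction is that your argument gives $F_{I_c}$ bounded while Remark~\ref{rmk:CellDecomp} asserts it is unbounded.

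For part (i) your argument is close to the paper's, with one structural difference. You first argue that the minimiser is $w_+$ (via the ``standard fact'' for finite Coxeter groups, which does hold and follows e.g.\ from $-w_+(h)$ being dominant) and then compute $w_+(\omega_i)$. The paper instead establishes the identity $w_+(\omega_i)=-\omega_j$ first and then reads off the lower bound $\langle\omega_i,h'\rangle\geq -h_j$ for all $h'\in\Xi_h$ directly from Proposition~\ref{prop:maxface} applied with index $j$ to $w_+(h')$; this avoids invoking a separate minimisation fact. Your flagged concern about the directions $h_{n+1},\dots,h_d$ is well-placed: the equality $\langle w_+(\omega_i),\alpha_k^\vee\rangle=-\delta_{kj}$ pins down $w_+(\omega_i)+\omega_j$ only modulo $\operatorname{span}(\phi_{n+1},\dots,\phi_d)$, and the paper passes over this point as well. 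For the purposes of the lemma as used downstream (namely the foliation \eqref{XiFoliation}) only the existence of the closed interval matters, not its exact lower endpoint.
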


\begin{proof} \hfill
	\begin{itemize}
		\item[(i)] Since $W=W_{I_\infty}\times W_{I\setminus I_\infty}$ is the direct product of an infinite and a finite Coxeter group, conjugation by the longest word of $W_{I\setminus I_\infty}$ defines an automorphism $\sigma$ of the Coxeter diagram of $(W,S)$ which fixes all vertices in $I_\infty$. Hence, we have $w_+s_kw_+=s_{\sigma(k)}$ for all $k\in I$, where $\sigma(k)=k$ for all $k\in I_\infty$. This implies that $w_+(\alpha_k^\vee)=\pm\alpha_{\sigma(k)}^\vee$ for all $k\in I$, where the sign is positive if and only if $k\in I_\infty$ by \eqref{PositivityCriterion} because $l(w_+)<l(w_+s_k)$ if and only if $k\in I_\infty$. Thus, the action of $w_+$ on $\a^\ast$ fixes $\omega_k$ for every $k\in I_\infty$ and for $k\in I\setminus I_\infty$, it maps $\omega_k$ to $-\omega_{\sigma(k)}$. If $[W:W^{(i)}]$ is finite, then $i\notin I_\infty$ and $w_+(\omega_i)=-\omega_j$ for $j=\sigma(i)\in I\setminus I_\infty$. Since $i\in\op{supp}(w_+)$, we obtain from Lemma \ref{lem:weyl_difference} that
			$$0<\langle\omega_i,h-w_+(h)\rangle=h_i-\langle w_+(\omega_i),h\rangle=h_i+h_j,$$
		so the interval $[-h_j,h_i]$ is well-defined. If $h'\in\Xi_h$, then by $W$-invariance of $\Xi_h$, we also have $w_+(h')\in\Xi_h$. Proposition \ref{prop:maxface} now implies that $\langle \omega_k,w_+(h')\rangle\leq h_k$ for all $k\in I$ and hence
		$$h_i'=\langle \omega_i,h' \rangle=\langle w_+(\omega_i),w_+(h')\rangle=\langle -\omega_j,w_+(h')\rangle\geq -h_j.$$
		Thus, in this case $H^{(i)}(t)$ can only intersect $\Xi_h$ if $t\geq -h_j$. On the other hand, we know that $P^{(i)}(h_i)=F^{(i)}$ is a face of $\Xi_h$ and, using the same reasoning, one verifies that
		$$P^{(i)}(-h_j)=\op{conv}(W^{(i)}w_+\cdot h)=\op{conv}(w_+W^{(j)}\cdot h)=w_+(F^{(j)})$$
		is also a face of $\Xi_h$ and thus non-empty. Since $\Xi_h$ is convex, it follows that $P^{(i)}(t)$ is non-empty for all $t\in [-h_j,h_i]$.
		\item[(ii)] Consider the orbit of $\omega_i$ under the $W$-action on $\a^\ast$. Since $\langle\omega_i,\alpha_j^\vee\rangle=\delta_{ij}$ for all $i,j\in I$, it follows easily by induction on the word length (cf. the proof of \cite[Proposition 3.12 a)]{Kac}) that the stabilizer of $\omega_i$ is the standard subgroup $W^{(i)}$. Since $[W:W^{(i)}]=\infty$, the $W$-orbit of $\omega_i$ is infinite. In addition, it is contained in $\omega_i-Q_+$ which implies that the orbit contains elements of the form $\omega_i-\alpha$ with $\alpha\in Q_+$ and $\op{ht}(\alpha)$ unbounded. For every $t\in\R$, we therefore find $w\in W$ such that $t_0:=\langle \omega_i,w(h) \rangle\leq t$. Since $P^{(i)}(t_0)$ contains $w(h)$, it is non-empty. By convexity, it follows that $P^{(i)}(t)$ is non-empty for all $t\in [t_0,h_i]$. As $t$ can be chosen arbitrarily small and $t_0\leq t$, the assertion follows. \qedhere
	\end{itemize}
\end{proof}

\noindent Using the notation of the preceding lemma, we define an interval $I_h\subset\R$ by
$$I_h:=\begin{cases} [-h_j,h_i] & \text{if $[W:W^{(i)}]<\infty$,} \\ (-\infty,h_i] & \text{if $[W:W^{(i)}]=\infty$.}\end{cases}$$
so that we may express the previous result result more briefly as
\begin{equation}
	\Xi_h=\bigcup_{t\in I_h} P^{(i)}(t). \label{XiFoliation}
\end{equation}

\noindent Our next goal is to determine the face poset of $P^{(i)}(t)$ for $t\in I_h$. By the discussion in Section \ref{ConvexGeom}, for every face $F$ of $P^{(i)}(t)$ there exists a unique minimal face $F'$ of $\Xi_h$ such that $F=F'\cap H^{(i)}(t)$ and its dimension is determined by \eqref{FaceIntersection}. In particular, every face of $P^{(i)}(t)$ is exposed and every vertex of $F$ is either a vertex of $F'$ that happens to be contained in $H^{(i)}(t)$, or lies in the interior of a unique edge of $F'$. The first case is only possible if $t=\langle \omega_i,w(h) \rangle$ for some $w\in W$. Thus, in general $F$ need not contain any vertices of $\Xi_h$, but it still inherits the following property from $F'$.

\begin{lem} \label{lem:face_hull_lemma}
	Every face of $P^{(i)}(t)$ is the convex hull of its vertices. 
\end{lem}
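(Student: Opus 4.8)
The plan is to fix a face $F$ of $P^{(i)}(t)$ and, using the preparation preceding the lemma, write $F=F'\cap H^{(i)}(t)$ for the unique minimal face $F'$ of $\Xi_h$. By Theorem \ref{thm:dualcoxeter_iso} we have $F'=\conv(W'\cdot h)$ for a standard coset $W'$, so $F'$ is itself the convex hull of its vertices. The facet inequalities for $w=1$ in Proposition \ref{prop:maxface} place $\Xi_h$ inside the pointed cone $h-\sum_{i\in I}\R_{\geq 0}\alpha_i^\vee$, so neither $\Xi_h$ nor $F$ contains a line, and $F$ is closed as an intersection of closed sets. If $W'$ is finite then $F'$ and hence $F$ is a polytope and the claim is classical, so the content lies in the unbounded case. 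I would prove $F=\conv(\operatorname{vert}(F))$ by induction on $\dim F'$, the cases $\dim F'\leq 1$ being immediate.

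For the inductive step, take $p\in F$. If $p$ lies in the relative boundary of $F$, it is contained in a proper exposed face $F_0$ of $F$; by the discussion around \eqref{FaceIntersection}, $F_0=G\cap H^{(i)}(t)$ for a face $G$ of $\Xi_h$ with $\dim G<\dim F'$, and every vertex of $F_0$ is a vertex of $F$, so the induction hypothesis yields $p\in\conv(\operatorname{vert}(F_0))\subseteq\conv(\operatorname{vert}(F))$. If $p$ lies in the relative interior of $F$, I would pick a vertex $q$ of $F$ (which exists, since $F$ is a nonempty closed line-free convex set) and follow the ray from $q$ through $p$. Should this ray leave $F$, it does so at a boundary point $p'$, whence $p\in[q,p']$ is handled by the boundary case. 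The only remaining possibility is that the ray never leaves $F$, i.e.\ $d:=p-q\in\operatorname{rec}(F)\setminus\{0\}$, where $\operatorname{rec}(F)$ is the recession cone.

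To settle this last case it suffices to show $\operatorname{rec}(F)\subseteq\operatorname{rec}(\conv(\operatorname{vert}(F)))$, for then $p=q+d$ with $q\in\conv(\operatorname{vert}(F))$ and $d$ a recession direction of $\conv(\operatorname{vert}(F))$. Since $F$ is line-free, $\operatorname{rec}(F)$ is a pointed cone generated by the directions of its extreme rays, so it is enough to treat a single unbounded one-dimensional face $\rho$ of $F$. Writing $\rho=G\cap H^{(i)}(t)$ for the minimal face $G$ of $\Xi_h$ and using that the edges of $\Xi_h$ are bounded (Remark \ref{rmk:CellDecomp}, as rank-$1$ Coxeter groups are finite), the dimension formula \eqref{FaceIntersection} forces $\dim G=2$ with $G$ unbounded, i.e.\ $G=w(F_J)$ for an infinite rank-$2$ standard parabolic $W_J$. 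Such a $G$ has infinitely many vertices $wW_J\cdot h$, and I would argue that the transversal slice $\rho=G\cap H^{(i)}(t)$ meets infinitely many edges of $G$, producing infinitely many vertices of $F$ marching to infinity along $\rho$, whose convex hull is all of $\rho$; this gives $\operatorname{rec}(\rho)\subseteq\operatorname{rec}(\conv(\operatorname{vert}(F)))$ and closes the induction.

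The main obstacle is exactly this last step: controlling the unbounded faces. One must exclude the possibility that $\rho$ is an \emph{interior} ray of the two-dimensional face $G$ meeting none of its edges, which would leave $\rho$ with a single vertex and defeat the claim; instead one must show that any unbounded slice of $G$ by $H^{(i)}(t)$ necessarily runs along the polygonal boundary structure of $G$ and therefore carries infinitely many edge-crossing vertices. This is where the specific geometry of the orbit hull $G=\conv(wW_J\cdot h)$ and the local finiteness of the defining hyperplane arrangement from Remark \ref{rmk:CellDecomp} enter decisively, whereas the bounded part of the argument is routine because bounded faces are polytopes.
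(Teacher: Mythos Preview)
Your plan has a real problem in the unbounded case, and the difficulty is worse than the one you flag at the end. After reducing to an extreme ray direction $d$ of $\operatorname{rec}(F)$ you pass, without justification, to an ``unbounded one-dimensional face $\rho$ of $F$'' pointing in that direction. That correspondence holds for polyhedra, but you have not yet established any polyhedral structure on $F$, and for general line-free closed convex sets it can fail outright (a paraboloid has a one-dimensional recession cone and no edges). More damagingly, even granting that $\rho$ is a one-dimensional face of $F$, your proposal to exhibit ``infinitely many vertices of $F$ marching to infinity along $\rho$'' is self-defeating: a one-dimensional face carries at most two vertices, and a ray exactly one; every interior point of $\rho$ lies in the relative interior of a face of $F$ and is therefore \emph{not} a vertex of $F$. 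The crossings of $H^{(i)}(t)$ with edges of the ambient $2$-face $G$ of $\Xi_h$ that you want to use are interior points of $\rho$, so they contribute nothing to $\operatorname{vert}(F)$. Your recession-cone strategy would need a completely different mechanism for manufacturing vertices of $F$ escaping to infinity in the direction $d$, and it is not clear how to do this without essentially proving the lemma another way first.

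The paper sidesteps all of this with a one-line compactification. Given $x\in F$, intersect $F$ with the half-space $E_+=\{y\in E_h:\langle\omega,y\rangle\geq\langle\omega,x\rangle\}$, where $\omega:=\sum_{j\in I}\omega_j$. Each $\omega_j$ is bounded above on $\Xi_h$ by Proposition~\ref{prop:maxface}; on $E_+$ their sum is bounded below, so each $\omega_j$ is individually bounded there and $F_+:=F\cap E_+$ is compact. Since $F_+$ is compact with only exposed faces, it is the convex hull of its own vertices; every vertex of $F_+$ is either already a vertex of $F$ or lies in the interior of an edge of $F$ crossing $\partial E_+$, hence is a convex combination of the two endpoints of that edge. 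Discreteness of the vertex set and compactness of $F_+$ keep the relevant collection finite, so $x\in F_+\subset\conv(Y')$ for a finite set $Y'$ of vertices of $F$. No induction, no recession cones, no analysis of unbounded faces.
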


\noindent If $P^{(i)}(t)$ is a convex polytope, this is of course a well-known property. For arbitrary closed convex sets, however, this is not necessarily true without further requirements on the set. In our situation, the assertion can be easily reduced to the compact case by a cut-off argument.

\begin{proof}
	Let $F$ be any face of $P^{(i)}(t)$ and choose $F'$ as the smallest face of $\Xi_h$ such that $F=F'\cap H^{(i)}(t)$. By the preceding discussion, the set $Y$ of vertices of $F$ is discrete and not empty and $\op{conv}(Y)\subset F$ holds by convexity. To show the converse, we claim that for every $x\in F$, there exists a finite subset $Y'\subset Y$ such that $x\in\conv(Y')$. Set $\omega:=\sum_{i\in I}\omega_i\in\a^\ast$, let $x\in F$ be arbitrary and consider the closed half-space
	$$E_+:=\{y\in E_h: \langle\omega,x\rangle\leq \langle\omega,y\rangle \}.$$
	Then $F_+:=F\cap E_+$ is compact since $F=F'\cap P^{(i)}(t)$ is closed and each of the linearly independent functionals $\omega_1,\ldots,\omega_n\in\a^\ast$ is bounded on $F_+$. Every face of $F_+$ is exposed, so by \cite[Corollary 18.5.1]{Rockafellar}, $F_+$ is the convex hull of its vertices. Now every vertex of $F_+$ is either a vertex of $F$ contained in $E_+$, or it lies in the interior of a unique edge of $F$. Let $Y_1'\subset Y$ be the set of vertices of $F$ contained in $E_+$ and let $Y_2'\subset Y$ be the set of all endpoints of edges of $F$ which start in $E_+$ and end outside of it. Since $Y$ is discrete and $F_+$ is compact, the set $Y':=Y_1'\cup Y_2'$ is finite and every vertex of $F_+$ is either contained in $Y'$ or it is a convex combination of two points in $Y'$. Hence, it follows that $x\in F_+\subset\conv(Y')\subset\conv(Y)$, as claimed. As $x\in F$ was arbitrary, we conclude that $F\subset\conv(Y)$.
\end{proof}

\noindent Moreover, $P^{(i)}(t)$ is invariant under the action of the rank-$(n-1)$ subgroup $W^{(i)}\subset W$ which is the Weyl group of the submatrix $\A^{(i)}$ of $\A$ with the $i$-th row and column removed. The set $\Phi^{(i)}$ of real roots of $\A^{(i)}$ can be identified with the subset of $\Phi$ that lies in the $\Z$-span of $\Phi_0\setminus\{\alpha_i\}$. The Kac-Moody root datum $\mc{D}$ with GCM $\A$ can be restricted to a free and cofree root datum with GCM $\A^{(i)}$ without changing $\Lambda$ which allows us to define an open fundamental chamber $C^{(i)}$ and a Tits cone $X^{(i)}$ for $W^{(i)}$ in $\a$ by
	$$C^{(i)}:=\{h'\in\a: \langle \alpha_k,h' \rangle>0\,\, \forall k\in I\setminus\{i\}\} \qquad \text{and} \qquad X^{(i)}:=\bigcup_{w\in W^{(i)}} w(\overline{C^{(i)}}).$$
The closure of $C^{(i)}$ is a strict fundamental domain for the $W^{(i)}$-action on $X^{(i)}\subset\a$, so every face of $P^{(i)}(t)\subset X^{(i)}$ is $W^{(i)}$-equivalent to a face that intersects the closed fundamental chamber
	$$\overline{C^{(i)}}=\{h'\in\a: \langle \alpha_k,h' \rangle\geq 0\,\, \forall k\in I\setminus\{i\}\}.$$
In particular, every vertex is $W^{(i)}$-equivalent to a vertex in that set. More precisely, the next proposition asserts that there are only finitely many such vertices and all of them are already contained in the open fundamental chamber $C^{(i)}$. To distinguish between the different notions of chambers, we refer to the $W$-translates of $\overline{C}$ as $W$-chambers and to the $W^{(i)}$-translates of $\overline{C^{(i)}}$ as $W^{(i)}$-chambers and likewise for the open chambers.

\begin{prop}
	For every $t\in I_h$, the set $P^{(i)}(t)$ is the convex hull of a finite union of $W^{(i)}$-orbits. More precisely, there exist finitely many vertices $\{x_1,\ldots,x_m\}$ of $P^{(i)}(t)$ that are contained in $C^{(i)}$ and
	$$P^{(i)}(t)=\conv\bigg( \bigcup_{r=1}^m W^{(i)}\cdot x_r\bigg).$$
\end{prop}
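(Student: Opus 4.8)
The plan is to show two things: first, that $P^{(i)}(t)$ has only finitely many vertices lying in $\overline{C^{(i)}}$, and second, that all such vertices actually lie in the open chamber $C^{(i)}$. Once both are established, the $W^{(i)}$-invariance of $P^{(i)}(t)=\Xi_h\cap H^{(i)}(t)$ (which follows because $W^{(i)}$ fixes $\omega_i$, hence preserves each level set $H^{(i)}(t)$, and preserves $\Xi_h$) together with Lemma \ref{lem:face_hull_lemma} applied to the face $P^{(i)}(t)$ will immediately give the claimed formula: every vertex of $P^{(i)}(t)$ is $W^{(i)}$-equivalent to one of the finitely many vertices in $\overline{C^{(i)}}$, and since $P^{(i)}(t)$ is the convex hull of its vertices, it is the convex hull of the finite union of their $W^{(i)}$-orbits.

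First I would recall that by the discussion preceding Lemma \ref{lem:face_hull_lemma}, every vertex $x$ of $P^{(i)}(t)$ is either a vertex $w(h)$ of $\Xi_h$ that happens to lie on $H^{(i)}(t)$, or lies in the relative interior of a unique edge $[w_1(h),w_2(h)]$ of $\Xi_h$ that crosses the hyperplane $H^{(i)}(t)$. In either case $x$ is determined by vertices of $\Xi_h$, and by Corollary \ref{cor:graphAdjacency} the edges of $\Xi_h$ correspond to pairs $\{w_1,w_2\}$ with $w_2=w_1 s_k$ for some $k\in I$. To bound the number of vertices in $\overline{C^{(i)}}$, I would argue that since $\overline{C^{(i)}}$ is a strict fundamental domain for $W^{(i)}$ on $X^{(i)}$ and $W^{(i)}$ acts freely on the vertex set $W\cdot h$ (the translates are distinct, as $h\in C$ is regular), each $W^{(i)}$-orbit of vertices of $\Xi_h$ contributes at most one representative to $\overline{C^{(i)}}$. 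The vertices of $P^{(i)}(t)$ lying in $\overline{C^{(i)}}$ thus come either from the finitely many $W$-translates $w(h)$ with $\langle\omega_i,w(h)\rangle=t$ whose $W^{(i)}$-orbit meets $\overline{C^{(i)}}$, or from midpoints of edges crossing $H^{(i)}(t)$. The key finiteness input is that the functional $\omega=\sum_k\omega_k$, as used in the proof of Lemma \ref{lem:face_hull_lemma}, is bounded below on any slice intersected with $\overline{C^{(i)}}$: on $\overline{C^{(i)}}$ the coordinate $\langle\omega_i,\cdot\rangle$ is pinned to $t$, while the remaining coordinates $\langle\omega_k,\cdot\rangle$ for $k\neq i$ range over a compact set because $P^{(i)}(t)\cap\overline{C^{(i)}}$ is compact (the relevant standard subgroup $W^{(i)}$ acts so that the fundamental-domain slice is bounded in the directions transverse to $\alpha_i^\vee$). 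Hence only finitely many vertices of $\Xi_h$, and finitely many crossing edges, can contribute.

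The second and more delicate point is showing that no such vertex lies on a wall of $\overline{C^{(i)}}$, i.e. that every vertex in $\overline{C^{(i)}}$ actually lies in $C^{(i)}$. Here I would argue by contradiction: suppose a vertex $x$ of $P^{(i)}(t)$ satisfies $\langle\alpha_k,x\rangle=0$ for some $k\in I\setminus\{i\}$, so $x$ lies on the wall fixed by $s_k\in W^{(i)}$. Then $s_k(x)=x$, and I would use this reflection symmetry to produce two distinct points of $P^{(i)}(t)$ whose midpoint is $x$, contradicting the assumption that $x$ is a vertex (a $0$-dimensional exposed face). Concretely, since $x$ is fixed by $s_k$ but $x$ is a genuine vertex, I would locate a point $y\in P^{(i)}(t)$ near $x$ with $y\neq s_k(y)$; then $[y,s_k(y)]$ is a segment in $P^{(i)}(t)$ through $x$, forcing $x$ into the relative interior of that segment and contradicting that $x$ is a vertex. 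The main obstacle I anticipate is making this last step rigorous: I must ensure such a point $y$ exists, which amounts to checking that $x$ is not an isolated point of $P^{(i)}(t)$ in a direction transverse to the wall $\ker(\alpha_k)$. This should follow from Lemma \ref{lem:weyl_difference} and the regularity of $h$, which guarantee that $\Xi_h$ (and hence its slice) has full-dimensional local structure near any of its points in the Tits cone, but verifying the transversality carefully against the constraint $\langle\omega_i,\cdot\rangle=t$ is where the argument needs the most care.
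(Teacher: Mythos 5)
Your overall skeleton is the same as the paper's: reduce to counting the vertices of $P^{(i)}(t)$ in $\overline{C^{(i)}}$, show they in fact lie in $C^{(i)}$, and then conclude from Lemma \ref{lem:face_hull_lemma} and $W^{(i)}$-invariance. However, both of the substantive steps have genuine gaps. For finiteness, your ``key input'' is the compactness of $P^{(i)}(t)\cap\overline{C^{(i)}}$, which you assert in a parenthesis rather than prove; this is not obvious (when $W^{(i)}$ is infinite, $P^{(i)}(t)$ itself is unbounded, and boundedness of its intersection with the fundamental domain is essentially as strong as the statement you are proving). You also write ``the finitely many $W$-translates $w(h)$ with $\langle\omega_i,w(h)\rangle=t$'', which begs the question: there are infinitely many such translates whenever $W^{(i)}$ is infinite, since the whole $W^{(i)}$-orbit of any one of them satisfies the same equation; what must be shown is finiteness \emph{up to $W^{(i)}$-equivalence}. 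The paper's actual argument is a direct combinatorial count that avoids any compactness claim: writing $w^{-1}(\omega_i)=\omega_i-\alpha$ with $\alpha\in Q_+$, one has $\langle\omega_i,w(h)\rangle=\langle\omega_i,h\rangle-\langle\alpha,h\rangle$; two elements producing the same $\alpha$ lie in the same right $W^{(i)}$-coset because $W^{(i)}$ is exactly the stabilizer of $\omega_i$; and since $h\in C$, only finitely many $\alpha\in Q_+$ can realize a prescribed value of $\langle\alpha,h\rangle$. Edges are then handled by noting that only $n$ edges emanate from each vertex.

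The second step is also not correct as formulated. If $x$ lies on the wall $\ker(\alpha_k)$ and $y$ is a nearby point of $P^{(i)}(t)$ with $s_k(y)\neq y$, the segment $[y,s_k(y)]$ does \emph{not} pass through $x$: its midpoint is the $s_k$-fixed projection of $y$, not $x$. To run your argument you would need $y=x+\varepsilon\alpha_k^\vee$ with both $x\pm\varepsilon\alpha_k^\vee\in P^{(i)}(t)$, and your appeal to ``full-dimensional local structure'' to produce such a $y$ is false for vertices of the first type (genuine vertices $w(h)$ of $\Xi_h$ admit no segment through them inside $\Xi_h$). The correct treatment splits into the two types: a type-one vertex is a $W$-translate of the regular point $h$ and therefore never lies on any reflection hyperplane, so no contradiction argument is needed there; for a type-two vertex $x$ in the interior of an edge $[w_1(h),w_2(h)]$, the condition $\langle\alpha_k,x\rangle=0$ forces $w_2=s_kw_1$ because an edge of $\Xi_h$ crosses exactly one wall (Corollary \ref{cor:graphAdjacency}), whence $s_k(\omega_i)=\omega_i$ gives $\langle\omega_i,w_1(h)\rangle=\langle\omega_i,w_2(h)\rangle$, the whole edge lies in $H^{(i)}(t)$ and hence is an edge of $P^{(i)}(t)$ containing $x$ in its relative interior --- contradicting that $x$ is a vertex. (Equivalently: the edge is parallel to $\alpha_k^\vee$, which is exactly the transversal direction your perturbation needs; but this is the content of the argument and has to be proved, not assumed.)
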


\begin{proof}
	Applying Lemma \ref{lem:face_hull_lemma} to the trivial face $F=P^{(i)}(t)$ yields that $P^{(i)}(t)$ is the convex hull of its vertices. Every such vertex is $W^{(i)}$-equivalent to a vertex in $\overline{C^{(i)}}$, so $P^{(i)}(t)$ is the convex hull of the union of all $W^{(i)}$-orbits of its vertices in $\overline{C^{(i)}}$. We first show that there are only finitely many such vertices. Recall that every vertex is of one of the following two types: It is either a vertex of $\Xi_h$ contained in $H^{(i)}(t)$, i.e. equal to $w(h)$ for some $w\in W$, or it lies in the interior of a unique edge of $\Xi_h$. \\
	First, there can only be finitely many vertices of the first type. If $t\neq\langle \omega_i, w(h) \rangle$ for all $w\in W$, there cannot be any such vertices at all, so assume that $t=\langle \omega_i, w(h) \rangle$ for some $w\in W$ and let $\alpha\in Q_+$ be such that $w^{-1}(\omega_i)=\omega_i-\alpha$. For any other $w'\in W$ with $w'^{-1}(\omega_i)=\omega_i-\beta$, we have $\langle \omega_i,w'(h) \rangle=t$ if and only if $\langle\alpha,h\rangle=\langle \beta,h\rangle$. If $\alpha=\beta$, then $w^{-1}(\omega_i)=w'^{-1}(\omega_i)$, so that $w'w^{-1}\in W^{(i)}$ and $w(h)$ is $W^{(i)}$-equivalent to $w'(h)$. Otherwise, since $\langle \beta,h\rangle>0$ for all $\beta\in Q_+$, the equality $\langle\alpha,h\rangle=\langle \beta,h\rangle$ is only possible for finitely many $\beta\in Q_+$.  \\
	Second, this argument also shows that there can only be finitely many vertices of the second type. Let $x$ be a vertex of $P^{(i)}(t)$ that is contained in the interior of an edge of $\Xi_h$. The endpoints of that edge are $w(h)$ and $ws_k(h)$ for some $w\in W$ and $k\in I$ with $\langle \omega_i,w(h)\rangle>t$ and $\langle\omega_i,ws_k(h)\rangle<t$. By what we have just shown, there are only finitely many $W^{(i)}$-inequivalent elements $w\in W$ with $\langle \omega_i,w(h)\rangle>t$ and from every such vertex $w(h)$, there are only $n$ edges emanating from it, so there are only finitely many $W^{(i)}$-inequivalent edges of $\Xi_h$ that cross $H^{(i)}(t)$. \\
	Hence, there exist finitely many vertices $\{x_1,\ldots,x_m\}$ of $P^{(i)}(t)$ that lie in $\overline{C^{(i)}}$. We now show that they are actually contained in $C^{(i)}$. To this end, it suffices to show that none of these vertices is contained in a wall of $\overline{C^{(i)}}$. If $x_r$ is of the first type, then $x_r$ is $W$-equivalent to $h$ which is assumed to be contained in the open $W$-chamber $C$. This is a subset of $C^{(i)}$, so $x_r$ is not contained in any wall of $\overline{C^{(i)}}$. If $x_r$ is of the second type, it is contained in the interior of a unique edge $L$ of $\Xi_h$ whose endpoints are $w_1(h)$ and $w_2(h)$ for some $w_1,w_2\in W$. By Corollary \ref{cor:graphAdjacency}, this edge crosses precisely one wall, so $\langle \alpha_k,x_r\rangle=0$ for some $k\neq i$ is only possible if $w_2=s_kw_1$. But then $\omega_i$ takes the same value at $w_1(h)$ and $w_2(h)$ which implies that $\omega_i$ is constant on $L$. Since $x_r\in L\cap P^{(i)}(t)$, it follows that $L\subset P^{(i)}(t)$ is an edge of $P^{(i)}(t)$. However, $x_r$ is an interior point of $L$ and a vertex of $P^{(i)}(t)$, a contradiction.
\end{proof}

\noindent In particular, it follows that every vertex of $P^{(i)}(t)$ lies in an open $W^{(i)}$-chamber $w(C^{(i)})$ for some $w\in W^{(i)}$. Moreover, every face of $P^{(i)}(t)$ contains a vertex, so we also conclude that every face of $P^{(i)}(t)$ is $W^{(i)}$-equivalent to a face which contains at least one of the vertices $\{x_1,\ldots,x_m\}$. We denote by $P^{(i)}_{\op{ess}}(t)$ the union of all faces of $P^{(i)}(t)$ which are entirely contained in $C^{(i)}$. Equivalently, since $C^{(i)}$ is convex, a face lies in $P^{(i)}_{\op{ess}}(t)$ if and only if all its vertices are contained in $\{x_1,\ldots,x_m\}$. We call $P^{(i)}_{\op{ess}}(t)$ the \textit{essential part} of $P^{(i)}(t)$. This terminology is motivated by Proposition \ref{prop:essentialHull} below. To prepare for the proof, we need some more information on the faces of $P^{(i)}(t)$. Observe that if $m=1$, then $P^{(i)}(t)$ is the convex hull of a single $W^{(i)}$-orbit and the arguments of this section prove that its face poset is isomorphic to $\Sigma^\ast(W^{(i)},S^{(i)})$. If $m>1$, this is no longer true. Since $W^{(i)}$ acts simply transitively on $W^{(i)}$-chambers, a face in the essential part cannot be stabilized by any non-trivial element of $W^{(i)}$. Conversely, the following result shows that if a face of $P^{(i)}(t)$ is not contained in the essential part, it must have a certain degree of symmetry.

\begin{prop}\label{prop:invFaces}
	Every face of $P^{(i)}(t)$ is invariant under a parabolic subgroup of $W^{(i)}$. More precisely, for every face $F$ of $P^{(i)}(t)$, there exist $1\leq t\leq m$, $r_1,\ldots,r_t\in\{1,\ldots,m\}$, $w\in W^{(i)}$ and a standard subgroup $W_J\subset W^{(i)}$ such that
	$$F=\op{conv}\bigg(\bigcup_{s=1}^t wW_J\cdot x_{r_s}\bigg).$$
\end{prop}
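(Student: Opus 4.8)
The plan is to reduce $F$ to its vertices, to locate it inside a minimal face of $\Xi_h$, and to extract the required symmetry from the combinatorics of minimal coset representatives. By Lemma \ref{lem:face_hull_lemma} we have $F=\op{conv}(V(F))$, where $V(F)$ is the discrete vertex set of $F$, so it suffices to describe $V(F)$. By the preceding proposition every face of $P^{(i)}(t)$ is $W^{(i)}$-equivalent to one containing some $x_r$, so after translating $F$ by an element of $W^{(i)}$ I may assume $x_{r_1}\in V(F)$. As explained in Section \ref{ConvexGeom}, there is a unique minimal face $G$ of $\Xi_h$ with $F=G\cap H^{(i)}(t)$, and by Theorem \ref{thm:dualcoxeter_iso} I write $G=\op{conv}(w_*W_J\cdot h)$ with $w_*$ the minimal-length representative of the coset.

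The crucial observation is that $\psi:=w_*^{-1}(\omega_i)$ is dominant with respect to $W_J$: since $w_*$ has minimal length in $w_*W_J$ we have $l(w_*)<l(w_*s_k)$, hence $w_*(\alpha_k)\in\Phi_+$ for all $k\in J$ by \eqref{PositivityCriterion}, and as $\omega_i$ is dominant (Proposition \ref{prop:AssumptionConsequences}(i)) this yields $\langle\psi,\alpha_k^\vee\rangle=\langle\omega_i,(w_*\alpha_k)^\vee\rangle\geq 0$. Thus $w_*^{-1}(F)=\op{conv}(W_J\cdot h)\cap\{\langle\psi,\cdot\rangle=t\}$ is the slice of the convex hull of a single $W_J$-orbit of the ($W_J$-regular) point $h$ by a $J$-dominant hyperplane. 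Writing $J':=\{k\in J:\langle\psi,\alpha_k^\vee\rangle=0\}$, the standard subgroup $W_{J'}=\op{Stab}_{W_J}(\psi)$ preserves both $\op{conv}(W_J\cdot h)$ and the hyperplane, hence stabilises $w_*^{-1}(F)$, so that $F$ is invariant under the parabolic subgroup $\Gamma:=w_*W_{J'}w_*^{-1}$. Moreover $\Gamma\subseteq W^{(i)}$, since every generator $s_{w_*\alpha_k}$ $(k\in J')$ fixes $\omega_i$ because $\langle\omega_i,(w_*\alpha_k)^\vee\rangle=\langle\psi,\alpha_k^\vee\rangle=0$ and $W^{(i)}=\op{Stab}_W(\omega_i)$ (cf. the proof of Lemma \ref{lem:IndexDistinction}). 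A parabolic subgroup of $W$ contained in the standard parabolic $W^{(i)}$ is again parabolic in $W^{(i)}$ (a standard fact about Coxeter groups), so $\Gamma=wW_{J_0}w^{-1}$ for some $w\in W^{(i)}$ and a standard subgroup $W_{J_0}\subseteq W^{(i)}$; this already establishes the first assertion.

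To upgrade this symmetry to the explicit presentation, recall that every vertex of $P^{(i)}(t)$ lies in a unique open $W^{(i)}$-chamber and equals $c\cdot x_r$ for a unique label $c\in W^{(i)}$ and type $r$. Since $\Gamma$ permutes $V(F)$ by left multiplication on the labels and preserves types, it suffices to show that the labels occurring in $V(F)$ all lie in the single left coset $wW_{J_0}$ attached to $\Gamma=wW_{J_0}w^{-1}$ --- equivalently, that every vertex of $F$ sits in a chamber whose closure contains the $J_0$-cell $w(D_{J_0})$ fixed by $\Gamma$. Granting this, transitivity of $\Gamma$ on $wW_{J_0}$ shows that the set of types occurring is constant along the coset and that each orbit $wW_{J_0}\cdot x_{r_s}$ is filled out completely, and $F=\op{conv}(V(F))$ then gives $F=\op{conv}\bigl(\bigcup_{s}wW_{J_0}\cdot x_{r_s}\bigr)$, as required.

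The main obstacle is exactly this confinement of chamber labels. The minimal face $G$ is governed by the full group $W$, so that $w_*\in W$ and $W_J$ may even contain $s_i$, whereas the conclusion lives entirely inside $W^{(i)}=\op{Stab}_W(\omega_i)$. Translating the $W$-coset data of $G$ into $W^{(i)}$-chamber labels --- for the type-(a) vertices $w_*u\cdot h$ with $u\in W_J$ and $\langle\psi,u\cdot h\rangle=t$, determined by the signs of $\langle\alpha_k,w_*u\cdot h\rangle=\langle u^{-1}w_*^{-1}\alpha_k,h\rangle$ for $k\neq i$, and likewise for the type-(b) edge midpoints --- and verifying that none of them escapes the star of the cell $w(D_{J_0})$ is where the genuine combinatorial work lies; I expect it to hinge on the $J$-dominance of $\psi$ together with the minimality of $w_*$. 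By comparison, the convex-geometric inputs (Lemma \ref{lem:face_hull_lemma} and the minimal-face correspondence \eqref{FaceIntersection}) and the verification of parabolic invariance are routine.
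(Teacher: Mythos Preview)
Your route to the first assertion --- constructing the stabiliser $\Gamma=w_*W_{J'}w_*^{-1}$ from the minimal ambient face $G$ of $\Xi_h$, checking $\Gamma\subseteq W^{(i)}$ via $\langle\psi,\alpha_k^\vee\rangle=0$, and invoking that a parabolic of $W$ inside $W^{(i)}$ is parabolic in $W^{(i)}$ --- is correct and genuinely different from the paper's. The paper never passes through the minimal face $G$; instead it argues \emph{locally}: if an edge of $F$ runs from a vertex in $C^{(i)}$ to one in $w(C^{(i)})$ with $w\neq 1$, then that edge must cross a wall $V_{\alpha_k}$ ($k\neq i$), and reflecting the edge in $s_k$ produces another edge of $P^{(i)}(t)$ meeting the first in an interior point, forcing the two to coincide and hence $w=s_k$. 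From this it deduces $s_k$-invariance of the supporting hyperplane and builds up the maximal standard $W_J$ one reflection at a time.

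The gap you flag in your final paragraph is real and is precisely what your top-down approach does not supply. You have identified the stabiliser $\Gamma$ of $F$ in $W^{(i)}$, but nothing in your argument prevents $V(F)$ from meeting several $\Gamma$-cosets of chambers: knowing that $\Gamma$ permutes $V(F)$ does not by itself force $V(F)$ to be a single $\Gamma$-orbit per type. The paper closes exactly this gap with the same local edge argument: once the maximal $W_J$ is in hand, any putative vertex outside $W_J\cdot\{x_{r_1},\dots,x_{r_t}\}$ is linked by an edge (via connectedness of the $1$-skeleton) to one inside, and the reflection trick produces a new $s_l\notin W_J$ stabilising $F$, contradicting maximality. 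In other words, the paper's edge-crossing lemma does double duty --- it both discovers generators of the stabiliser and confines the vertex labels --- whereas your structural identification of $\Gamma$ accomplishes only the first. To finish along your lines you would still need an argument of this local flavour (or an equivalent convexity statement for the set of chamber labels), so the ``genuine combinatorial work'' you anticipate is not a detail but the heart of the second assertion.
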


\noindent The second statement is indeed a refinement of the first because if $F$ has the asserted shape, then $F$ is invariant under the parabolic subgroup $wW_Jw^{-1}\subset W^{(i)}$.

\begin{proof}
	Since $F$ is the convex hull of its vertices, it suffices to show that the vertices of $F$ form a union of orbits of a standard coset of $(W^{(i)},S^{(i)})$. After replacing $F$ by $w^{-1}(F)$ for a suitable $w\in W^{(i)}$, we may assume that at least one vertex of $F$ is contained in $C^{(i)}$. We are going to show that there exists a unique maximal subset $J\subset I\setminus\{i\}$ such that $F$ is invariant under the standard subgroup $W_J\subset W^{(i)}$, so that $w(F)$ will be invariant under the parabolic subgroup $wW_Jw^{-1}$. If all vertices of $F$ are already contained in the set $\{x_1,\ldots,x_m\}$, the claim is satisfied only by $J=\emptyset$. Otherwise, there exists an edge $L\subset F$ which connects a vertex $x_r\in C^{(i)}$ to a vertex $y\in w(C^{(i)})$ for some $w\in W^{(i)}\setminus\{1\}$. We claim that $w=s_k$ for some $k\neq i$ and $y=s_k(x_r)$. \\
	Let $w=s_{i_1}\ldots s_{i_l}$ be a reduced decomposition of $w$ and set $k:=i_1\in I\setminus\{i\}$. Then we have $\langle \alpha_k,x_r\rangle>0$ as $x_r\in C^{(i)}$ and $\langle \alpha_k,y\rangle=\langle w^{-1}(\alpha_k),w^{-1}(y)\rangle<0$ because $w^{-1}(\alpha_k)\in\Phi^{(i)}_-$ by \eqref{PositivityCriterion} and $w^{-1}(y)\in C^{(i)}$. Hence, $x_r$ and $y$ lie in open $W^{(i)}$-chambers on different sides of the hyperplane $V_{\alpha_k}:=\op{ker}(\alpha_k)$, so the edge $L$ crosses this plane at an interior point. Since the faces of $P^{(i)}(t)$ are $W^{(i)}$-invariant, $L':=s_k(L)$ is also an edge of $P^{(i)}(t)$. As $s_k$ fixes an interior point of $L$, the two edges $L'$ and $L$ intersect at a point that lies in the relative interior of both of them. This is only possible if $L'=L$, which forces $w=s_k$ and $y=s_k(x_r)$. In particular, it shows that $L$ is invariant under the rank-one standard subgroup of $W^{(i)}$ generated by $s_k$. \\
	Next, we show that the entire face $F$ is $s_k$-invariant. To this end, let $E\subset\a$ be a supporting hyperplane for $F$ and denote by $E':=E-x_r\subset\a$ the corresponding parallel translate containing the origin, i.e. the linear subspace corresponding to $E$. Observe that $E$ contains the infinite line $L_0$ through the two distinct points $x_r$ and $y=s_k(x_r)$ which crosses the wall $V_{\alpha_k}$, so $E'$ contains the one-dimensional $s_k$-invariant subspace $L_0':=L_0-x_r$ which is not contained in $V_{\alpha_k}$. Therefore, $E'$ and $V_{\alpha_k}$ intersect in a $(d-2)$-dimensional subspace and every point $v\in E'$ can be uniquely written as $v=v_1+v_2$ with $v_1\in L_0'$ and $v_2\in E'\cap V_{\alpha_k}$. This yields that $s_k(v)=s_k(v_1)+s_k(v_2)=s_k(v_1)+v_2\in E'$, so $E'$, and hence $F$, is $s_k$-invariant. Continuing in this way for all $W^{(i)}$-chambers adjacent to $C^{(i)}$ that $F$ meets, we conclude that $F$ is invariant under a unique maximal standard subgroup $W_J$ of $W^{(i)}$. \\
	Hence, if $\{x_{r_1},\ldots,x_{r_t}\}$ are the vertices of $F$ contained in $C^{(i)}$, then the set $W_J\cdot \{x_{r_1},\ldots,x_{r_t}\}$ also consists of vertices of $F$. We claim that these are all the vertices of $F$. Assume that this is not the case, then there exists a vertex $y_1$ in a chamber $w_1(C^{(i)})$ for some $w_1\in W^{(i)}\setminus W_J$. Since the underlying graph of $P^{(i)}(t)$ is connected, we may choose $y_1$ in such a way that it is connected by an edge $L$ to a vertex of the form $y_2=w_2(x_{r_s})$, where $s\in\{1,\ldots,t\}$ and $w_2\in W_J$. As $F$ is $W_J$-invariant, $w_2^{-1}(L)$ is an edge of $F$ connecting $x_{r_s}$ to a vertex $y':=w_2^{-1}(y_1)\in w_2^{-1}w_1(C^{(i)})$. Exactly as in the first part of the proof, we conclude that $w_2^{-1}w_1=s_l$ for some $l\neq i$, but $s_l\notin W_J$. This contradicts the maximality of $W_J$ and shows that the set of vertices of $F$ is the union of the $W_J$-orbits of its vertices in $C^{(i)}$. The proof is therefore complete.
\end{proof}

\noindent In order to construct the set $P^{(i)}(t)$ from its vertices, one first has to form the union of a finite number of $W^{(i)}$-orbits and then take the convex hull of the resulting set. However, with the previous proposition at hand, we can now show that the essential part allows us to exchange these operations and instead express $P^{(i)}(t)$ as the union of convex hulls of $W^{(i)}$-orbits. Observe that if $m>1$, then the essential part contains uncountably many points.

\begin{prop} \label{prop:essentialHull}
	Every point in $P^{(i)}(t)$ lies in the convex hull of the $W^{(i)}$-orbit of a point in $P^{(i)}_{\op{ess}}(t)$, i.e.
	$$P^{(i)}(t)=\bigcup_{y\in P^{(i)}_{\op{ess}}(t)} \conv(W^{(i)}\cdot y).$$
\end{prop}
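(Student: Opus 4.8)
The inclusion ``$\supseteq$'' is immediate: every $y\in P^{(i)}_{\op{ess}}(t)$ lies in $P^{(i)}(t)$, and since $P^{(i)}(t)$ is convex and $W^{(i)}$-invariant, $\conv(W^{(i)}\cdot y)\subseteq P^{(i)}(t)$. For the reverse inclusion the plan is to use that $\overline{C^{(i)}}$ is a strict fundamental domain for $W^{(i)}$ on $X^{(i)}\supseteq P^{(i)}(t)$ and that both sides are $W^{(i)}$-invariant, reducing everything to a single point $p\in P^{(i)}(t)\cap\overline{C^{(i)}}$ for which I must produce an essential $y$ with $p\in\conv(W^{(i)}\cdot y)$. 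Throughout I abbreviate $\Delta:=\conv\{x_1,\ldots,x_m\}\subseteq C^{(i)}$ and $K:=\op{cone}\{\alpha_k^\vee : k\in I\setminus\{i\}\}$, and write $y\succeq z$ for $\langle\omega_k,y\rangle\ge\langle\omega_k,z\rangle$ for all $k\neq i$ (equivalently $y-z\in K$ inside the slice $H^{(i)}(t)$).

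The mechanism of the argument is that the desired membership is entirely governed by the order $\succeq$. Indeed, if $y\in C^{(i)}$ is regular for $W^{(i)}$ and $p,y$ lie in the common affine slice $H^{(i)}(t)$ — so that $p-y\in\op{span}\{\alpha_k^\vee : k\neq i\}$ and the equality constraints of Corollary \ref{cor:inequality1} hold automatically — then Corollary \ref{cor:inequality1}, applied to the $W^{(i)}$-orbit of $y$, reduces $p\in\conv(W^{(i)}\cdot y)$ to the inequalities $\langle\omega_k,w(p)\rangle\le\langle\omega_k,y\rangle$; and because $p\in\overline{C^{(i)}}$, \eqref{fundamentalChamber} makes $w=1$ the extreme case, so it suffices to know $y\succeq p$. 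Thus the whole problem collapses to finding an \emph{essential} $y$ with $y\succeq p$.

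To construct such a $y$ I would proceed in two moves. First, writing $p=\sum_{r,w}\lambda_{w,r}\,w(x_r)$ as a convex combination of orbit points and ``folding'' each $w(x_r)$ back to its chamber representative, the point $q:=\sum_{r,w}\lambda_{w,r}\,x_r\in\Delta$ satisfies $q\succeq p$, since $\langle\omega_k,x_r-w(x_r)\rangle\ge0$ for every $w\in W^{(i)}$ by \eqref{fundamentalChamber}. Second, I would pass from $q$ to an essential point lying above it, using the structural fact that the faces of $P^{(i)}(t)$ exposed by functionals $\ell=\sum_{k\neq i}c_k\omega_k$ with all $c_k>0$ are essential: for such $\ell$ one has $\langle\ell,w(x_r)\rangle\le\langle\ell,x_r\rangle$ with equality only when $w(x_r)=x_r$, whence
\[
\op{argmax}_{P^{(i)}(t)}\ell=\op{argmax}_{\Delta}\ell=\conv\{x_r : \langle\ell,x_r\rangle \text{ maximal}\},
\]
a convex hull of some of the $x_r$, and therefore an exposed face contained in $C^{(i)}$.

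It thus remains to dominate $q$ by a point of such a maximal face, and this is the step I expect to be the main obstacle, since for the (generally non-polytopal) set $P^{(i)}(t)$ a $\succeq$-maximal point need not a priori be exposed by a \emph{strictly} positive functional. I would overcome this by working inside the polytope $\Delta$: choose $y$ to be a $\succeq$-maximal element of the compact set $\Delta\cap(q+K)$, so that $y\succeq q$ and $(y+K)\cap\Delta=\{y\}$, which forces the tangent cone $T:=\op{cone}(\Delta-y)$ to meet $K$ only in $0$. A Farkas/Motzkin alternative applied to the system $\langle\ell,x_r-y\rangle\le0$ (all $r$) together with $\langle\ell,\alpha_k^\vee\rangle\ge1$ (all $k\neq i$) then yields $\ell\in\op{int}(K^\ast)$ maximised on $\Delta$ at $y$, for infeasibility would produce a nonzero vector of $T\cap K$, which is excluded. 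By the preceding paragraph $y$ then lies in an essential face of $P^{(i)}(t)$, and since $y\succeq q\succeq p$, Corollary \ref{cor:inequality1} gives $p\in\conv(W^{(i)}\cdot y)$, completing the reduction and hence the proof.
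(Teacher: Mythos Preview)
Your argument is correct, but it follows a genuinely different route from the paper's. The paper proceeds by induction on the dimension of the smallest face $F$ of $P^{(i)}(t)$ containing the given point, and the induction step leans entirely on Proposition~\ref{prop:invFaces}: if $F$ is not essential, it is invariant under a nontrivial standard subgroup $W_J\subset W^{(i)}$, and a reflection $s_k\in W_J$ is used to push the point (or a perturbation of it) to the relative boundary of $F$, where the induction hypothesis applies; one then reconstructs the original point as a convex combination of two $s_k$-related boundary points. Your approach bypasses both the induction and Proposition~\ref{prop:invFaces} altogether. You first reduce, via Corollary~\ref{cor:inequality1} applied to the restricted root datum, the membership $p\in\conv(W^{(i)}\cdot y)$ to the dominance relation $y\succeq p$, then ``fold'' $p$ into the finite polytope $\Delta=\conv\{x_1,\dots,x_m\}$ and locate a $\succeq$-maximal $y\in\Delta$ above it; a Farkas/separation step (using that both $T=\op{cone}(\Delta-y)$ and $K$ are polyhedral with $T\cap K=\{0\}$) produces a strictly positive functional $\ell\in\op{int}(K^\ast)$ exposing the face of $P^{(i)}(t)$ through $y$, which is then automatically essential.

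What each approach buys: the paper's proof stays internal to the face lattice and needs no external convexity lemmas, but it does require the somewhat delicate Proposition~\ref{prop:invFaces}. Your proof is shorter and conceptually cleaner---it isolates the single order-theoretic criterion $y\succeq p$ and dispatches it with linear programming duality---at the cost of invoking Farkas' lemma; it also makes transparent why the essential faces are exactly those exposed by strictly dominant functionals, a fact the paper's proof does not highlight. One minor point worth making explicit in your write-up: when you invoke Corollary~\ref{cor:inequality1} for the $W^{(i)}$-orbit, the fundamental weights for the restricted datum $\mc{D}(I\setminus\{i\})$ need not coincide with the $\omega_k$ globally, but they agree on $\op{span}\{\alpha_k^\vee:k\neq i\}$, and since $p,y\in H^{(i)}(t)\cap E_h$ forces $p-y$ into that span, your use of the original $\omega_k$ is legitimate.
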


\begin{proof}
	Let $x\in P^{(i)}(t)$ be arbitrary and let $F$ be the smallest face of $P^{(i)}(t)$ containing $x$. We prove the claim by induction on the dimension of $F$. Since $P^{(i)}(t)$ is $W^{(i)}$-invariant, we may w.l.o.g. assume that $x\in \overline{C^{(i)}}$, which implies that one of the vertices of $F$ is contained in $C^{(i)}$. Let $x_r$ for some $r\in\{1,\ldots,m\}$ be one such vertex. \\
	If $\op{dim}(F)=0$, then $x=x_r\in P^{(i)}_{\op{ess}}(t)$ and there is nothing to show. Assume now that $\op{dim}(F)>0$. If all vertices of $F$ are in $\{x_1,\ldots,x_m\}$, then $F\subset P^{(i)}_{\op{ess}}(t)$ and we are done. Otherwise, Proposition \ref{prop:invFaces} shows that $F$ is invariant under a non-trivial standard subgroup $W_J\subset W^{(i)}$. Assume first that $x$ is not a global fixed-point of $W_J$ and choose some $k\in J$ such that $s_k(x)\neq x$. Then $\omega_k$ is not bounded from above on the infinite line $L_0$ spanned by $x$ and $s_k(x)$, so $L_0$ intersects the boundary of $F$ in at least one point $x_+$ which is contained in a proper face of $F$. By the induction hypothesis, there exists a point $y\in P^{(i)}_{\op{ess}}(t)$ such that $x_+\in\op{conv}(W^{(i)}\cdot y)$. Since $x$ and $s_k(x)$ are contained in the relative interior of $F$ and $x_+$ in the boundary, $x_+$ cannot lie between $x$ and $s_k(x)$ on $L_0$. This implies that $x$ is a convex combination of $x_+$ and $s_k(x_+)$ and we conclude that $x\in\op{conv}(W^{(i)}\cdot y)$. \\
	If $s_k(x)=x$ for all $k\in J$, then the line $L_0$ is not well-defined, but this issue is easily dealt with. By minimality of $F$, $x$ has to lie in the relative interior of $F$. Let $E$ be a supporting hyperplane for $F$ and consider its parallel translate $E':=E-x$ containing the origin. Let $k\in J$ be arbitrary, then $s_k\in W_J$ acts on $\a$ as a non-trivial involution, so one of its eigenvalues is $-1$. Moreover, it preserves $E'$ and acts non-trivially on it. Hence, there exists a non-zero vector $v\in E'$ such that $s_k(v)=-v$. Choosing $v\neq 0$ sufficiently small, we have $x':=x+v\in F$ and $x$ can be written as the non-trivial convex combination
	$$x=\frac{1}{2}(x+v+x-v)=\frac{1}{2}(x'+s_k(x')).$$
	The same argument as above applied to the line spanned by $x'$ and $s_k(x')$ yields that $x'$, and hence $x$, lies in the convex hull of $W^{(i)}\cdot y$ for some $y\in P^{(i)}_{\op{ess}}(t)$.
\end{proof}

\begin{exam}
	We illustrate the content of Proposition \ref{prop:essentialHull} in Figure \ref{fig:EssentialParts}. Both pictures show the union of the convex hulls of three $W^{(i)}$-orbits, where $W^{(i)}$ is the symmetric group $S_3$ in (a) and $W^{(i)}=(\Z_2)^3$ in (b). The set $P^{(i)}(t)$ is the convex hull of these three orbits, which is not depicted in the figure, and the essential part is highlighted in red. In both examples, it is easy to imagine that as a point moves over the red region, the corresponding $W^{(i)}$-orbits trace out the entire set $P^{(i)}(t)$. In Figure (a), we have also drawn the reflecting hyperplanes which bound the Weyl chambers. In Figure (b), these would be the coordinate hyperplanes and the closed fundamental chamber is the octant of $\R^3$ where all coordinates are non-negative.
	\begin{figure}[h] 
		\centering
		\begin{subfigure}[t]{0.3\linewidth}
			\includegraphics[width=\linewidth]{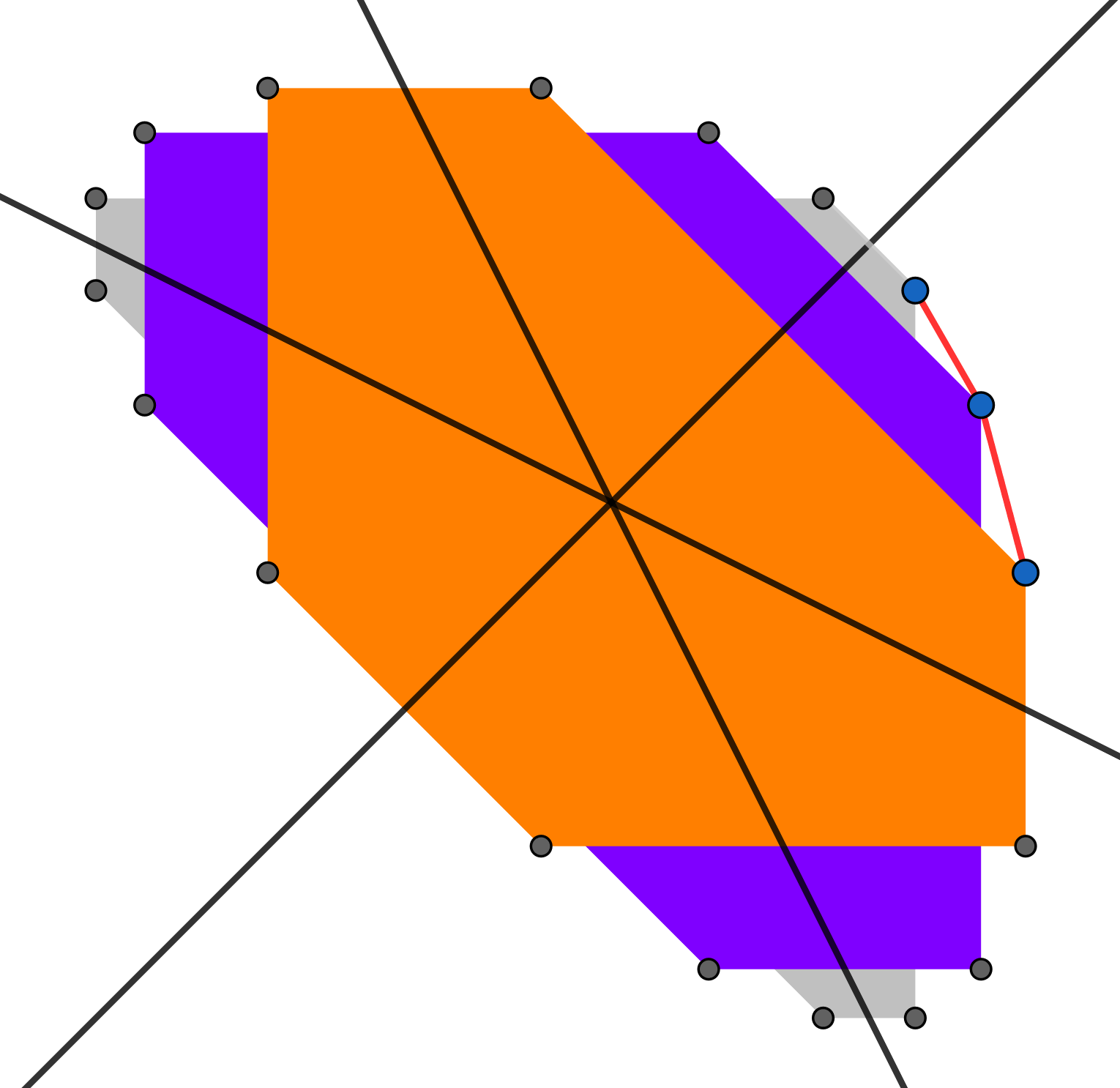}
			\caption{$W^{(i)}=S_3$}
		\end{subfigure}
		\hspace{2cm}
		\begin{subfigure}[t]{0.3\linewidth}
			\includegraphics[width=\linewidth]{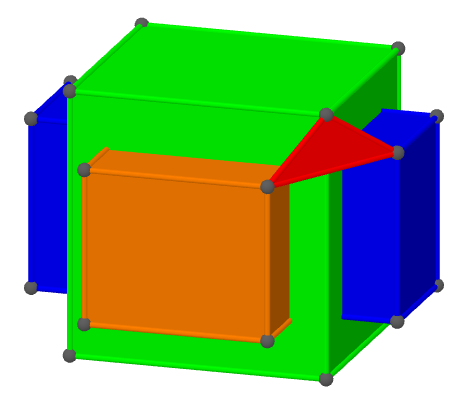}
			\caption{$W^{(i)}=(\Z_2)^3$}
		\end{subfigure}	 
	\caption{Two configurations of $P^{(i)}_{\op{ess}}(t)$.}
	\label{fig:EssentialParts}
	\end{figure}
\end{exam}

\noindent Proposition \ref{prop:essentialHull} will play an important role in the proof of Theorem \ref{thm:KMKostant}. We close this section by briefly outlining how the correspondence between minimal edge paths in $\Xi_h$ and minimal galleries in the Coxeter complex $\Sigma(W,S)$ (cf. Corollary \ref{cor:graphAdjacency}) provides a method to determine the number $m=m(t)$ of vertices of $P^{(i)}_{\op{ess}}(t)$ for a varying $t\in I_h$, which is especially interesting in the case of finite index. We use the same notation as in Lemma \ref{lem:IndexDistinction}. Suppose that $[W:W^{(i)}]<\infty$, let $w_+$ be the longest word of $W_{I\setminus I_\infty}$ and let $j\in I$ be the unique index such that $s_j=w_+s_iw_+$. Denote by $w_-$ the unique element of minimal length in the coset $W^{(i)}w_+=w_+W^{(j)}$. The faces $F^{(i)}$ and $w_+(F^{(j)})=\conv(W^{(i)}\cdot w_+(h))$ of $\Xi_h$ are ``opposite'' in the sense that they have the same shape, their defining hyperplanes are parallel and $\Xi_h$ lies between them. The point $w_-(h)$ is the unique vertex of $w_+(F^{(j)})$ whose distance to $h$ in the underlying graph of $\Xi_h$ is minimal. As this graph is isomorphic to the underlying chamber system of $\Sigma(W,S)$, minimal edge paths from $h$ to $w_-(h)$ are in bijective correspondence with reduced decompositions of $w_-$. In addition, every such path is contained in the open $W^{(i)}$-chamber $C^{(i)}$ and every vertex in $C^{(i)}$ lies on such a path. With this information, it is not difficult to show that every minimal edge path from $h$ to $w_-(h)$ intersects $H^{(i)}(t)$ in one of the vertices $\{x_1,\ldots,x_m\}$ of $P^{(i)}_{\op{ess}}(t)$ and, conversely, every vertex lies on such a minimal path. In particular, if $w_-$ has a unique reduced decomposition, then there exists only one minimal edge path from $h$ to $w_-(h)$ and we have $m(t)=1$ for all $t\in I_h$. In this case, $P^{(i)}(t)$ is the convex hull of a single $W^{(i)}$-orbit. This happens, for example, if $W$ is the symmetric group $S_n$ and $i\in\{1,n-1\}$, but it is not the case if $n\geq 4$ and $i\in\{2,\ldots,n-2\}$. We point out explicitly, however, that $m(t)$ is usually not equal to the number of reduced decompositions of $w_-$ because distinct minimal edge paths from $h$ to $w_-(h)$ may well intersect $H^{(i)}(t)$ in the same point. For example, every such path begins with the edge $[h,s_i(h)]$ and ends with $[w_-s_j(h),w_-(h)]$, so that $m(t)=1$ for all $t\in [-\langle\omega_j,h\rangle,-\langle\omega_j,s_j(h)\rangle]\cup [\langle\omega_i,s_i(h)\rangle,\langle\omega_i,h\rangle]$. In between these values, minimal edge paths typically split and merge. Since the vertices of $P^{(i)}(t)$ are simply the intersection points of $H^{(i)}(t)$ with edges of $\Xi_h$, varying $t$ can only change $m(t)$ if this causes the hyperplane $H^{(i)}(t)$ to begin or to stop intersecting an edge of $\Xi_h$. Hence, $m(t)$ is constant on the connected components of the complement of the discrete set $\{\langle\omega_i,w(h)\rangle\mid w\in W\}$ in $I_h$. \\
If $[W:W^{(i)}]=\infty$, the element $w_-\in W$ is not defined which mirrors the fact that the length of a left $W^{(i)}$-reduced element in $W$ is unbounded. In this case, there is no canonical vertex of $\Xi_h$ that all minimal edge paths in $C^{(i)}$ starting at $h$ can be extended to. Nevertheless, the set of all $w\in W$ such that $w(h)$ is the first vertex in a minimal edge path in $C^{(i)}$ starting at $h$ with $\langle\omega_i,w(h)\rangle\leq t$ is still finite and $m(t)$ can be computed as in the finite-index case by counting reduced decompositions of these elements and investigating whether the associated minimal paths intersect $H^{(i)}(t)$ in distinct points.

\section{Kac-Moody groups} \label{KMGroups}

Having only dealt with Kac-Moody algebras and their Weyl groups so far, we now introduce the class of groups that will be relevant to our discussion. Kac-Moody groups can be defined in many different ways and varying levels of generality. A common approach that was introduced by J. Tits, following the scheme-theoretic construction of the classical Chevalley groups, is to define a Kac-Moody group over a field $\K$ as the group $\mc{G}(\K)$ of $\K$-points of a group functor $\mc{G}:\text{$\Z$-alg}\to\text{Grp}$ satisfying a list of axioms (cf. \cite[Section 2]{Tits}). However, Tits also shows that over fields, this functor is uniquely determined (up to a certain non-degeneracy assumption) and the group $\mc{G}(\K)$ can be given an explicit presentation by generators and relations (cf. \cite[Section 3.6 and 3.9, Theorem 1]{Tits}). For our purposes, it will suffice to consider Kac-Moody groups defined via this presentation which is called the \textit{constructive Tits functor} in \cite[Section 8.3]{Remy}. The set of imaginary roots does not occur in this definition of Kac-Moody groups, so we continue to denote the set of real roots by $\Phi$ instead of $\Phi^{\op{re}}$. In the first part of this chapter, however, we do not require Assumption \ref{assum:GlobalAssumption}. For any field $\K$, we write $\K^\ast:=\K\setminus\{0\}$ for its group of invertible elements. 
\medskip \newline
Let $\mc{D}=(I,\A,\Lambda,(c_i)_{i\in I},(h_i)_{i\in I})$ be an arbitrary Kac-Moody root datum of rank $d$, not necessarily free or cofree here. For any field $\K$, we set $T(\K):=\text{Hom}_{\text{Grp}}(\Lambda,\K^\ast)\cong\Lambda^\vee\otimes_\Z \K^\ast$. Hence, $T(\K)$ is a free abelian group of rank $d$ isomorphic to $(\K^\ast)^d$. For $r\in \K^\ast$ and $h\in\Lambda^\vee$, we denote by $r^h\in T(\K)$ the homomorphism $\lambda\mapsto r^h(\lambda):=r^{\langle \lambda,h\rangle}$. If $\{v_1,\ldots,v_d\}$ is a $\Z$-basis of $\Lambda^\vee$, then $T(\K)$ is generated by $\{r^{v_i}\mid 1\leq i\leq d,\,r\in\K^\ast\}$. The canonical action of the Weyl group on $\Lambda$ induces an action of $W$ on $T(\K)$ by $w(t):=t\circ w^{-1}$ for $w\in W$ and $t\in T(\K)$. If $t=r^h$ for some $r\in\K^\ast$ and $h\in\Lambda^\vee$, then $w(r^h)=r^{w(h)}$. \\
For any Kac-Moody root datum $\mc{D}$, the \textit{(split, minimal) Kac-Moody group of type $\mc{D}$ over $\K$} is the group $G(\K)$ generated by $T(\K)$ and a family of subgroups $(U_\alpha(\K))_{\alpha\in\Phi}$ indexed by the real roots, subject to the relations in \cite[Section 3.6]{Tits}.\footnote{We do not give the exact presentation here since its formulation requires concepts that we have not discussed. Also, their precise form is largely irrelevant to our discussion. What matters are the algebraic and representation-theoretic properties listed below that follow from the relations.} Here, $U_\alpha(\K)$ is an abelian subgroup of $G(\K)$ isomorphic to the additive group $(\K,+)$. We choose an isomorphism $x_\alpha:\K\to U_\alpha(\K)$ and write $U_\alpha(\K)=\{x_\alpha(r) \mid r\in\K\}$. We call $T(\K)$ the \textit{(standard $\K$-split) torus} and $(U_\alpha(\K))_{\alpha\in\Phi}$ the \textit{root groups} of $G(\K)$. Every root group is normalized by the torus. In fact, for every $\alpha\in\Phi,\,r\in\K$ and $t\in T(\K)$, we have $tx_\alpha(r)t^{-1}=x_\alpha(t(c(\alpha))r)$. We point out that we have suppressed the root datum $\mc{D}$ from the notation, but the torus $T(\K)$ clearly depends on $\mc{D}$ and so do some of the defining relations that we have not mentioned.

\begin{rmk} \label{rmk:KMBuilding}
	Although we will not use this theory explicitly, we mention for completeness that the tuple $(G(\K),(U_\alpha(\K))_{\alpha\in\Phi},T(\K))$ is an \textit{RGD-system of type $(W,S)$} (see \cite[Section 8.6]{AB}). This is a simple consequence of the fact that the real roots $\Phi$ are in bijective correspondence with the roots of the Coxeter complex $\Sigma(W,S)$ and the commutator relations between the root groups that are built into the presentation of $G(\K)$ (cf. \cite[Theorem 7.69]{Marquis}). In particular, $G(\K)$ acts strongly transitively on an associated Moufang twin building which is a valuable tool to investigate its algebraic structure. We refer to \cite[Chapter 8]{AB} for more information. 
\end{rmk}

\noindent To simplify the notation in the following discussion, we fix a field $\K$ and write $G,T,U_\alpha,\ldots$ instead of $G(\K),T(\K),U_\alpha(\K),\ldots$ and only indicate the field if there is danger of confusion. For every $i\in I$, set $\tilde{s}_i:=x_{\alpha_i}(1)x_{-\alpha_i}(1)x_{\alpha_i}(1)$ and let $\tilde{W}$ be the subgroup of $G$ generated by $\{\tilde{s}_i \mid i\in I\}$. It is called the \textit{extended Weyl group}. It induces the natural $W$-action on $T$ in the sense that for every $w\in W$ there exists some $\tilde{w}\in\tilde{W}$ such that $\tilde{w}t\tilde{w}^{-1}=w(t)$ for all $t\in T$. Moreover, it also permutes the root groups by the rule $\tilde{w}x_\alpha(r)\tilde{w}^{-1}=x_{w(\alpha)}(\pm r)$ for all $\alpha\in\Phi$ and $r\in\K$, where the ambiguous sign need not concern us here. \\
For any subset $J\subset I$, we set $\Phi(J):=\Phi\cap\bigoplus_{j\in J}\Z\alpha_j$, $\Phi_+(J):=\Phi_+\cap\Phi(J)$ and $\Phi^\perp_+(J):=\Phi_+\setminus \Phi_+(J)$ and define the following three subgroups of $G$:
	\begin{align*}
		G_J &:= \langle T, U_\alpha \mid \alpha\in\Phi(J) \rangle \\
		U_+^\perp(J) &:= \langle U_\alpha \mid \alpha\in\Phi_+^\perp(J) \rangle \\			
		P_J &:= \langle T, U_\alpha \mid \alpha\in\Phi_+ \cup \Phi(J) \rangle
	\end{align*}
The group $P_J$ is called the \textit{standard parabolic subgroup of type $J$} and it has a Levi decomposition
	\begin{equation}
		P_J\cong G_J \ltimes U_+^\perp(J). \label{LeviDecomp} 
	\end{equation}
The Levi factor $G_J$ is again a Kac-Moody group associated to the Kac-Moody root datum $\mc{D}(J):=(J,\A_J,\Lambda,(c_j)_{j\in J},(h_j)_{j\in J})$, where $\A_J$ is the submatrix of $\A$ with rows and columns indexed by $J$. Note that if $\mc{D}$ is free (resp. cofree), then $\mc{D}(J)$ has the same property. If $J=\emptyset$, we set $B_+:=P_\emptyset$ and $U_+:=U_+^\perp(\emptyset)$ and obtain $B_+\cong T\ltimes U_+$. The group $B_+$ is called the \textit{positive Borel subgroup} of $G$ and $U_+$ is the subgroup of $G$ generated by all $U_\alpha$ with $\alpha\in\Phi_+$.
\medskip \newline
Next, we transfer some concepts from the representation theory of Kac-Moody algebras (cf. Chapter \ref{KMAlgebras}) to the group level. Let $\g$ be the Kac-Moody algebra of type $\mc{D}$ over $\K$ with Cartan subalgebra $\h\subset\g$ and let $\lambda\in\h^\ast$ be a dominant integral weight. Since the highest-weight module $\rho_\lambda:\g\to\mf{gl}(L(\lambda))$ is integrable, it can be lifted to a $\K$-linear representation $\hat{\rho}_\lambda:G\to \GL(L(\lambda))$ by
\begin{equation}
	\begin{aligned}
		\hat{\rho}_\lambda(x_\alpha(r)) &:= \exp(\rho_\lambda(re_\alpha)), &\qquad &\alpha\in\Phi,\,r\in\K, \\
		\hat{\rho}_\lambda(t)v_\mu &:= t(\mu)v_\mu, &\qquad &t\in T,\,\mu\in \op{wt}(L(\lambda)),\, v_\mu\in L(\lambda)_\mu,
	\end{aligned} \label{GroupRep}
\end{equation}
where $e_\alpha\in\g_\alpha$ is a suitably chosen non-zero vector (cf. \cite[Remark 7.6]{Marquis}). In particular, it follows from \eqref{WeightSpaceTranslation} and \eqref{GroupRep} that for $\alpha\in\Phi$, $r\in\K$ and $v_\mu\in L(\lambda)_\mu$, we have $\rho_\lambda(re_\alpha)^{N+1}v_\mu=0$ for some $N\in\N_0$ and
	\begin{equation}
		\hat{\rho}_\lambda(x_\alpha(r))v_\mu=v_\mu+\sum_{k=1}^N \text{(an element of $L(\lambda)_{\mu+kc(\alpha)}$).} \label{RootGroupAction}
	\end{equation}
Let us now specialize to the case that the field $\K$ is either $\R$ or $\C$. Let $\{v_1,\ldots,v_d\}$ be a $\Z$-basis of $\Lambda^\vee$, denote by $\a$ the real Cartan subalgebra of $\g_\R$ and by $\h$ the complex Cartan subalgebra of $\g_\C$. There exists a well-defined surjective \textit{exponential map}
	$$\exp:\h\to T(\C), \qquad \sum_{i=1}^d r_iv_i \mapsto \prod_{i=1}^d (e^{r_i})^{v_i}, \qquad r_1,\ldots,r_d\in\C,$$
which corresponds to the coordinate-wise exponential map $\C\to \C^\ast$ under the isomorphisms $\h\cong\C^d$ and $T(\C)\cong (\C^\ast)^d$. It restricts to an injective map $\exp:\a\to T(\R)$ whose image is isomorphic to $(\R_{>0})^d$ and will be denoted by $A$. It can be viewed as a subgroup of $G(\R)$ and of $G(\C)$. The polar decomposition in $\C$ and its trivial restriction to $\R$ imply that $T(\K)\cong M\times A$, where $M$ is isomorphic to $(S^1)^d$ if $\K=\C$ and to $(\Z_2)^d$ if $\K=\R$. The extended Weyl group $\tilde{W}\subset G$ normalizes $A$ and $\exp:\a\to A$ is $W$-equivariant. \\
Moreover, the compact involution $\theta:\g\to\g$ possesses an analogue on the group level. More precisely, there exists a unique involutive automorphism $\Theta:G\to G$ which satisfies $\Theta(x_\alpha(r))=x_{-\alpha}(\overline{r})$ for all $\alpha\in\Phi$ and $r\in\K$ and $\Theta(r^{v_i})=\overline{r}^{-v_i}$ for all $i\in\{1,\ldots,d\}$ and $r\in\K^\ast$. Here, the bars denote complex conjugation which can be omitted if $\K=\R$. It is called the \textit{compact involution} of $G$. Its fixed-point set is denoted by $K$ and called the \textit{compact form} of $G$. By definition of the extended Weyl group, $\tilde{W}$ is contained in $K$. \\
Using the fact that all panels in the twin building associated to $G$ (cf. Remark \ref{rmk:KMBuilding}) are isomorphic to the projective line over $\K$, it is not difficult to show that the action of $K$ on this building is chamber-transitive and hence $G=KB_+$. Moreover, we have $B_+=T\ltimes U_+$ and one easily verifies that $K\cap U_+=\{1\}$ and $K\cap T=M$. Since $T\cong M\times A$, this shows that $G$ has an \textit{Iwasawa decomposition} $G=KAU_+$ in the sense that
\begin{equation}
	\text{the multiplication map $K\times A\times U_+\to G$ is bijective.}
\end{equation}
Such a decomposition for certain types of Kac-Moody groups has been established, for example, in \cite[Proposition 5.1]{KP1} or \cite[Theorem 3.31]{FHHK}. The argument that we have indicated is inspired by \cite[Section 3]{DHK}. Note that the group $A$ is the same in the real and complex case, only $K$ and $U_+$ depend on the field. The corresponding projection onto the $A$-component will be denoted by
	$$\Pi:G\to A, \qquad g=kau\mapsto a.$$
We study a particular property of this map in our main theorem. A key ingredient of the proof will be the following facts about highest-weight representations of $G$. As discussed at the end of Chapter \ref{KMAlgebras}, if $\lambda\in\a^\ast\subset\h^\ast$ is a dominant integral weight, there exists a contravariant symmetric bilinear or Hermitian form $H$ on the irreducible real or complex highest-weight module $L(\lambda)$. Using that $G$ is generated by the torus $T$ and all root groups $U_\alpha$ with $\alpha\in\Phi$, it is straightforward to deduce from \eqref{GroupRep} and \eqref{ContravariantForm} that the corresponding representation of $G$ on $L(\lambda)$ satisfies 
	\begin{equation}
		H(g\cdot v,w)=H(v,\Theta(g^{-1})\cdot w) \qquad \text{for all $g\in G$ and $v,w\in L(\lambda)$.} \label{ContravariantForm2}
	\end{equation}
Therefore, $H$ is $K$-invariant. If the GCM $\A$ is symmetrizable, then $H$ is positive definite, so that $K$ acts on $L(\lambda)$ by unitary operators. Moreover, $A$ acts diagonally on $L(\lambda)$ with positive eigenvalues. In fact, if $a=\exp(h)$ with $h=\sum_{i=1}^d r_iv_i\in\a$, then a simple computation using \eqref{GroupRep} shows that $a(\mu)=e^{\langle\mu,h\rangle}$ for every weight $\mu$ of $L(\lambda)$, which is positive since $\langle\mu,h\rangle\in\R$. The action of $U_+$ is determined by \eqref{RootGroupAction}. In particular, if Assumption \ref{assum:GlobalAssumption} is satisfied, this discussion applies to the functionals in the dual basis $\mc{B}^\ast=\{\omega_1,\ldots,\omega_n,\phi_{n+1},\ldots,\phi_d\}\subset\a^\ast$ of Proposition \ref{prop:AssumptionConsequences}.

\section{The convexity theorem} \label{MainTheorem}

We now come to our main theorem. Throughout this chapter, we suppose that Assumption \ref{assum:GlobalAssumption} is in effect. Let $G$ be the split Kac-Moody group over $\K\in\{\R,\C\}$ associated to a free and cofree Kac-Moody root datum $\mc{D}$ of rank $d\geq 2n-\op{rank}(\A)$ and denote by $\a$ the Cartan subalgebra of $\g_\R$ with basis $\mc{B}=\{\alpha_1^\vee,\ldots,\alpha_n^\vee,h_{n+1},\ldots,h_d\}$. Let $G=KAU_+$ be an Iwasawa decomposition of $G$ and define
	$$\Pi:G\to A, \qquad g=kau\mapsto a$$
as the corresponding projection onto the $A$-component. It is clear that $\Pi(kg)=\Pi(g)$ for all $k\in K$ and $g\in G$, but there is no obvious way how to compute $\Pi(gk)$. The following theorem will answer this question in the special case that $g=a\in A$ is contained in the exponential of an open chamber of the Tits cone $X\subset\a$. Recall that the exponential map restricts to a $W$-equivariant isomorphism $\a\to A$ whose inverse will be denoted by $\log:A\to\a$.
	
\begin{thm} \label{thm:KMKostant}
	If $a\in A$ is contained in the exponential image of an open chamber of the Tits cone $X\subset\a$, then
	$$\{\Pi(ak)\mid k\in K\}=\exp(\conv(W\cdot \log(a))).$$
\end{thm}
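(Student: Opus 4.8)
The plan is to prove the two inclusions separately. The inclusion $\{\Pi(ak)\mid k\in K\}\subseteq\exp(\conv(W\cdot\log a))$ is the representation-theoretic direction and I expect it to be the easier one; the reverse inclusion (surjectivity onto the whole convex hull) is where the convex-geometric machinery of Chapter \ref{ConvexHulls} enters and constitutes the main difficulty. Throughout I would first normalise. Since $\tilde W\subseteq K$ and $\tilde w^{-1}a\tilde w=w^{-1}(a)$ for a lift $\tilde w$ of $w$, left $K$-invariance of $\Pi$ gives $\{\Pi(ak)\mid k\in K\}=\{\Pi(w^{-1}(a)k)\mid k\in K\}$, while $\conv(W\cdot\log a)$ is visibly $W$-invariant. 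Hence I may assume $h:=\log a\in C$.

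For the easy inclusion, fix $k\in K$ and set $b:=\Pi(ak)$, so $ak=\kappa\,b\,u$ with $\kappa\in K$ and $u\in U_+$. For each $i\in I$ I would test against the module $L(\omega_i)$ of Proposition \ref{prop:AssumptionConsequences}, equipped with its positive definite contravariant form $H$, under which $K$ acts by isometries by \eqref{ContravariantForm2}, $A$ acts diagonally with $a\cdot v_\mu=e^{\langle\mu,\log a\rangle}v_\mu$ on weight vectors, and $U_+$ raises weights by \eqref{RootGroupAction}. For arbitrary $w\in W$ let $v:=\tilde w^{-1}v_{\omega_i}$ span the extreme weight space $L(\omega_i)_{w^{-1}\omega_i}$. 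On the one hand $\|ak\,v\|=\|a(k\tilde w^{-1})v_{\omega_i}\|$, and decomposing $k\tilde w^{-1}v_{\omega_i}$ into its $H$-orthogonal weight components, together with the fact that $h\in\overline{C}$ forces $\langle\mu,h\rangle\le\langle\omega_i,h\rangle$ for every weight $\mu$ (as $\omega_i-\mu\in Q_+$), yields $\|ak\,v\|^2\le e^{2\langle\omega_i,h\rangle}\|v_{\omega_i}\|^2$. On the other hand $ak\,v=\kappa\,b\,u\,v$ with $u\,v=v+(\text{strictly higher weight terms})$, so orthogonality of weight spaces gives $\|ak\,v\|^2\ge e^{2\langle w^{-1}\omega_i,\log b\rangle}\|v_{\omega_i}\|^2$. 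Combining these, $\langle\omega_i,w(\log b)\rangle=\langle w^{-1}\omega_i,\log b\rangle\le\langle\omega_i,h\rangle$ for all $w,i$. Testing against the one-dimensional modules $L(\phi_i)$ of Proposition \ref{prop:AssumptionConsequences}(ii) gives the equalities $\langle\phi_i,\log b\rangle=\langle\phi_i,h\rangle$ for $i>n$. By Corollary \ref{cor:inequality1} these are exactly the (in-)equalities characterising $\Xi_h=\conv(W\cdot h)$, so $\log b\in\Xi_h$.

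For the reverse inclusion I would argue by induction on the rank $n=|I|$. The base case $n=1$ is the classical rank-one Kostant computation in the $\SL_2/\SU_2$-subgroup $G_{\{1\}}$, whose reachable set is the full segment $\exp([h,s_1(h)])$. For the inductive step fix $i\in I$ and a target $h'\in\Xi_h$; writing $t:=\langle\omega_i,h'\rangle$ we have $h'\in P^{(i)}(t)$ by the foliation \eqref{XiFoliation}, and Proposition \ref{prop:essentialHull} provides a point $y\in P^{(i)}_{\op{ess}}(t)\subset C^{(i)}$ (hence $W^{(i)}$-regular) with $h'\in\conv(W^{(i)}\cdot y)$. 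Applying the induction hypothesis to the Levi factor $G^{(i)}=G_{I\setminus\{i\}}$ of \eqref{LeviDecomp} — which is again a free and cofree split Kac-Moody group and whose Iwasawa decomposition $K^{(i)}AU_+^{(i)}$ is compatible with that of $G$, so that $\Pi|_{G^{(i)}}=\Pi^{(i)}$ by uniqueness of the decomposition in $G$ — produces $k_2\in K^{(i)}$ with $\Pi^{(i)}(\exp(y)k_2)=\exp(h')$. The goal is then to find $k_1\in K$ with $ak_1=\kappa\exp(y)u$ where the unipotent part $u$ lies in the radical $U_+^\perp(I\setminus\{i\})$ of the parabolic $P_{I\setminus\{i\}}$. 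Because $K^{(i)}$ normalises this radical (the Levi normalises the unipotent radical), the stray factor $u$ slips past $k_2$ into $U_+$, and one computes $\Pi(ak_1k_2)=\Pi(\exp(y)k_2\,u')=\Pi^{(i)}(\exp(y)k_2)=\exp(h')$, realising the target; the foliation \eqref{XiFoliation} ensures that letting $t$ range over $I_h$ and $y$ over $P^{(i)}_{\op{ess}}(t)$ exhausts $\Xi_h$.

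The hard part will be the construction of $k_1$, which is exactly where the failure of $K$ to normalise $U_+$ is felt: one must reach the essential point $\exp(y)$ of the slice modulo $K$ on the left and modulo the parabolic radical on the right, and crucially with trivial $U_+^{(i)}$-component, for otherwise a residual Levi-unipotent factor survives the computation above and perturbs $\Pi(ak_1k_2)$ away from $\exp(h')$. I expect this to be the technical heart of the argument — essentially a rank-one problem in the $i$-direction relative to $G^{(i)}$ — and to require the detailed description of $P^{(i)}_{\op{ess}}(t)$ together with the edge-path/minimal-gallery correspondence of Corollary \ref{cor:graphAdjacency}, which identifies the essential vertices as the crossings of minimal edge paths from $h$ with $H^{(i)}(t)$. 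Verifying that the resulting reachable set is closed may additionally be needed in order to pass to limiting values of $t$.
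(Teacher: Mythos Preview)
Your argument for the inclusion $\Pi(aK)\subset\exp(\Xi_h)$ is correct and matches the paper's proof essentially line for line: test against $L(\omega_i)$ and $L(\phi_i)$, use unitarity of $K$, diagonality of $A$, and \eqref{RootGroupAction} for $U_+$, then invoke Corollary~\ref{cor:inequality1}.

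For the reverse inclusion your inductive framework is also the paper's: foliate $\Xi_h$ by the slices $P^{(i)}(t)$, use Proposition~\ref{prop:essentialHull} to reduce to reaching essential points $y\in P^{(i)}_{\op{ess}}(t)$, then apply the rank-$(n-1)$ hypothesis in $G^{(i)}$ to pass from $\exp(y)$ to the target. You have also correctly isolated the crux: one needs $ak_1=\kappa\exp(y)u$ with $u$ in the unipotent radical $U_+^\perp(I\setminus\{i\})$ so that conjugation by $k_2\in K^{(i)}$ keeps it in $U_+$.

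The gap is in how $k_1$ is produced. It is \emph{not} a rank-one computation in the $i$-direction, nor does it use the edge-path description directly. The paper's device is a \emph{second} application of the induction hypothesis, to a different Levi subgroup. The essential point $y$ lies in a face $F'\subset P^{(i)}_{\op{ess}}(t)$, and by \eqref{FaceIntersection} and Theorem~\ref{thm:dualcoxeter_iso} one has $F'=F\cap H^{(i)}(t)$ with $F=w(F_J)=\conv(wW_J\cdot h)$ a proper face of $\Xi_h$. Applying the induction hypothesis inside the conjugated Levi $G_{w(J)}:=\tilde wG_J\tilde w^{-1}$ (with $w$ chosen $W_J$-reduced so that $U_+(w(J))\subset U_+$) yields $k_1\in K_{w(J)}$ with $w(a)k_1=k'\exp(y)u'$ and $u'\in U_+(w(J))$. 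The point is that $u'$ is then automatically in the radical of $P^{(i)}$: if some root $\beta$ occurring in $u'$ lay in $\Phi^{(i)}$, then $s_\beta\in W^{(i)}\cap wW_Jw^{-1}$ would stabilise $F$ and $H^{(i)}(t)$, hence $F'$ --- impossible, since $F'\subset C^{(i)}$ is fixed by no nontrivial element of $W^{(i)}$. This is precisely where the definition of the essential part does its work, and it replaces the speculative ``rank-one in the $i$-direction'' step entirely. No closure or limiting argument in $t$ is needed: each $t\in I_h$ is handled directly.
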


\noindent In order to prove Theorem \ref{thm:KMKostant}, let us fix some notation. Assume that $a\in A$ is fixed and set $h:=\log(a)\in\a$. By assumption, $h$ is contained in an open chamber of $X$. We write $\Xi_h:=\conv(W\cdot h)$ as in Chapter \ref{ConvexHulls} and define
$$\Pi_a:=\{\Pi(ak)\mid k\in K\}=\Pi(aK).$$
Thus, we need to show that $\Pi_a=\exp(\Xi_h)$. The proof will consist of two steps: First, we show that $\Pi_a\subset\exp(\Xi_h)$ by using tools from the representation theory of $G$. Afterwards, we prove that $\exp(\Xi_h)\subset\Pi_a$ by induction on the size $n$ of the GCM $\A$. Note that both sets do not depend on the $W$-translate of $a$ (resp. $h$). This is obvious for $\Xi_h$ and for $\Pi_a$, we may argue as follows: If $w\in W$ is arbitrary, then we may choose a representative $\tilde{w}\in\tilde{W}\subset K$ such that conjugation by $\tilde{w}$ induces the action of $w$ on $A$ and we have 
$$\Pi_{w(a)}=\Pi(w(a)K)=\Pi(\tilde{w}a\tilde{w}^{-1}K)=\Pi(a\tilde{w}^{-1}K)=\Pi(aK)=\Pi_a.$$
Thus, we may w.l.o.g. assume that $h\in C$ and $a\in\exp(C)$.

\subsection{Proof of the main theorem} \label{MainTheoremProof}

\subsubsection{Step 1: Highest-weight representations} \label{MainTheoremProof_Step1}

We begin by establishing that $\Pi_a\subset\exp(\Xi_h)$. Let $k\in K$ be arbitrary and write $ak\in G$ according to its Iwasawa decomposition as 
$$ak=k'a'u' \qquad \text{with $k'\in K,\,a'\in A,\, u'\in U_+$.}$$
In order to prove that $h':=\log(a')$ is contained in $\Xi_h$, we show that $h'$ satisfies the inequalities 
	$$\begin{cases}
		\langle \omega_i,w(h')\rangle\leq \langle\omega_i,h \rangle & \text{for all $w\in W$ and $i\in I$} \\
		\langle \phi_i,h'\rangle=\langle\phi_i,h \rangle & \text{for all $i\in \{n+1,\ldots,d\}$}
	\end{cases}$$
of Corollary \ref{cor:inequality1} by studying the action of $G=KAU_+$ in suitable highest-weight representations. By Proposition \ref{prop:AssumptionConsequences} and the discussion in Chapter \ref{KMGroups}, the dual basis $\mc{B}^\ast=\{\omega_1,\ldots,\omega_n,\phi_{n+1},\ldots,\phi_d\}$ of $\a^\ast$ consists of dominant integral weights and the group $G$ acts on the corresponding real or complex highest-weight modules $L(\omega_1),\ldots,L(\omega_n), L(\phi_{n+1}),\ldots,L(\phi_d)$. Each of these modules carries a positive definite, contravariant, symmetric bilinear or Hermitian form $H$ such that weight spaces belonging to different weights are orthogonal and by \eqref{ContravariantForm2}, the action of the subgroup $K\subset G$ is unitary. The subgroup $A\subset G$ acts diagonally on these modules with positive real eigenvalues and the action of $U_+$ is determined by \eqref{RootGroupAction}. Moreover, the modules $L(\phi_{n+1}),\ldots,L(\phi_d)$ are one-dimensional and the action of $U_+$ is trivial. We write $\hat{\rho}_i$ for the corresponding homomorphism $G\to\GL(L(\omega_i))$ if $i\in I$ and $G\to \GL(L(\phi_i))$ if $i\in\{n+1,\ldots,d\}$ and denote the associated representations of the Kac-Moody algebra $\g$ by $\rho_1,\ldots,\rho_d$.
\medskip \newline
First, we fix an arbitrary $i\in\{n+1,\ldots,d\}$ and show that $\langle \phi_i,h\rangle=\langle\phi_i,h' \rangle$. Since $L(\phi_i)$ is one-dimensional and the $K$-action is unitary, we have $\hat{\rho}_i(K)\subset\{\pm \id\}$ and thus 
	$$\hat{\rho}_i(ak) v=\pm\hat{\rho}_i(a) v=\pm a(\phi_i)v$$
for any vector $v\in L(\phi_i)$. Moreover, $U_+$ acts trivially on $L(\phi_i)$ which implies that
	$$\hat{\rho}_i(k'a'u') v=\hat{\rho}_i(k'a') v=a'(\phi_i)\hat{\rho}_i(k') v=\pm a'(\phi_i)v.$$
Since $ak=k'a'u'$, these two vectors must coincide. Now $a(\phi_i)$ and $a'(\phi_i)$ are both positive real numbers, so if we choose $v\neq 0$, then equality is only possible if $a(\phi_i)=a'(\phi_i)$ or, equivalently, $e^{\langle\phi_i,h\rangle}=e^{\langle\phi_i,h'\rangle}$. As the exponents are real numbers, this is equivalent to $\langle\phi_i,h\rangle=\langle\phi_i,h'\rangle$.
\medskip \newline
Thus, it only remains to show that if $w\in W$ and $i\in I$ are arbitrary, then $\langle\omega_i,w(h')\rangle\leq\langle\omega_i,h\rangle$. Since the set of weights of $L(\omega_i)$ is $W$-invariant, $\mu:=w^{-1}(\omega_i)$ is a weight of $L(\omega_i)$ and we may choose a vector $v\in L(\omega_i)_\mu$ with $||v||:=H(v,v)=1$. As we assume $\mc{D}$ to be free, every weight of $L(\omega_i)$ is of the form $\omega_i-\alpha$ for some $\alpha\in Q_+$. The eigenvalues of the diagonal map $\hat{\rho}_i(a)$ are precisely the exponentials of the eigenvalues of $\rho_i(h)$. Since $h\in C$, we have $\langle \alpha,h\rangle>0$ for all $\alpha\in Q_+$, so the largest eigenvalue is $a(\omega_i)=e^{\langle \omega_i,h\rangle}$. Moreover, $u'\in U_+$ is a product of elements in positive root groups, so \eqref{RootGroupAction} implies that for every $\nu\in\op{wt}(L(\omega_i))$ with $\mu<\nu\leq\omega_i$, there exists a suitable weight-$\nu$ vector $v_\nu\in L(\omega_i)_\nu$ such that
$$\hat{\rho}_i(u')v=v+\sum_{\mu<\nu\leq\omega_i} v_\nu.$$
On the one hand, this allows us to make the following computation:

\begin{equation}
\begin{aligned}
	||\hat{\rho}_i(ak)v|| &= ||\hat{\rho}_i(k'a'u')v||=\bigg|\bigg|\hat{\rho}_i(k'a')\bigg(v+\sum_{\mu<\nu\leq\omega_i} v_\nu\bigg)\bigg|\bigg| \\
	&=\bigg|\bigg|\hat{\rho}_i(k') \bigg(a'(\mu)v+\sum_{\mu<\nu\leq\omega_i}a'(\nu)v_\nu\bigg)\bigg|\bigg| \\
	&=\bigg|\bigg|a'(\mu)v+\sum_{\mu<\nu\leq\omega_i}a'(\nu)v_\nu\bigg|\bigg| \\
	&\geq ||a'(\mu)v||=a'(\mu)=a'(w^{-1}(\omega_i))=w(a')(\omega_i)
\end{aligned} \label{RepTheoryInequality}
\end{equation}

\noindent To obtain the inequality \eqref{RepTheoryInequality}, we have used that the $K$-action is unitary, that weight vectors in different weight spaces are orthogonal, $||v||=1$ and that $a'(\mu)=e^{\langle\mu,h'\rangle}>0$. On the other hand, the vector $\hat{\rho}_i(ak)v$ is contained in the image of the unit sphere under the diagonal map $\hat{\rho}_i(a)$. More precisely, since $L(\omega_i)$ has a basis consisting of weight vectors and $\hat{\rho}_i(k)v$ is a finite linear combination of basis vectors, there exists some $m\in\N$ such that
$$\hat{\rho}_i(k)v\in \bigoplus_{\substack{\alpha\in Q_+, \\ \op{ht}(\alpha)\leq m}} L(\omega_i)_{\omega_i-\alpha}=:V_0,$$
which is a finite-dimensional vector space, and $\hat{\rho}_i(k)v$ is contained in the unit sphere of $V_0$. Since $\hat{\rho}_i(a)$ acts diagonally on $L(\omega_i)$, it preserves $V_0$ and the restriction of $\hat{\rho}_i(a)$ to $V_0$ is represented by a finite diagonal matrix whose diagonal entries are the values $a(\omega_i-\alpha)$, where $\alpha$ runs over all elements of $Q_+$ such that $\op{ht}(\alpha)\leq m$ and $\omega_i-\alpha\in\op{wt}(L(\omega_i))$. Hence, $\hat{\rho}_i(ak)v$ is contained in the image of the unit sphere of $V_0$ under the diagonal map $\hat{\rho}_i(a)|_{V_0}$. This set is an ellipsoid whose half-axes have lengths given by the absolute values of the eigenvalues of $\hat{\rho}_i(a)|_{V_0}$. Since these are all positive, the absolute values can be omitted. In particular, the longest half-axis is the maximal eigenvalue $a(\omega_i)$ and we conclude that
$$w(a')(\omega_i)\leq ||\hat{\rho}_i(ak)v||\leq a(\omega_i).$$
This is equivalent to $e^{\langle \omega_i,w(h')\rangle}\leq e^{\langle \omega_i,h\rangle}$ and hence to $\langle \omega_i,w(h')\rangle\leq \langle \omega_i,h\rangle$ since the exponents are real numbers. The proof of the inclusion $\Pi_a\subset\exp(\Xi_h)$ is therefore finished.

\subsubsection{Step 2: Induction} \label{nonlinInduction}

We now proceed to show the converse inclusion $\exp(\Xi_h)\subset\Pi_a$ by induction on the size $n=|I|$ of the GCM $\A$. If $n=1$, then $\A=(2)$ is the Cartan matrix of the finite-dimensional simple Lie algebra $\mf{sl}_2(\K)$ and the associated Weyl group is isomorphic to $\Z_2$. By \cite[Exercise 7.33]{Marquis}, the Kac-Moody group associated to the minimal free and cofree root datum $\mc{D}^{\A}_{\text{Kac}}$ (which coincides with $\mc{D}^{\A}_{\text{sc}}$ since $\A$ is invertible) is $\SL_2(\K)$ and $G$ is a semidirect extension of that, i.e. $G\cong \SL_2(\K)\rtimes T'$ where $T'$ is a subtorus of $T$ of dimension $d-1$. In contrast to the Lie algebra case (cf. the discussion after Assumption \ref{assum:GlobalAssumption}), this is not necessarily a direct product. \\
Before we treat the case of an arbitrary free and cofree root datum, let us assume that $\mc{D}=\mc{D}^{\A}_{\text{Kac}}$. In this case, $G=\SL_2(\K)$ and the proof can be completely reduced to $(2\times 2)$-matrix computations. If we identify the positive (resp. negative) root group of $G$ with the group of upper (resp. lower) unitriangular matrices, then the compact involution of $G$ is the map $g\mapsto (\overline{g}^{-1})^T$. It induces the usual Iwasawa decomposition $G=KAU_+$, where $K=\SO(2)$ if $\K=\R$ and $K=\op{SU}(2)$ if $\K=\C$, $A$ is the group of diagonal matrices with determinant $1$ and positive real entries (both in the real and complex case) and $U_+$ is the upper unitriangular group. Hence, we have
$$h=\begin{pmatrix} r & 0 \\ 0 & -r\end{pmatrix} \text{and } a=\exp(h)=\begin{pmatrix} e^r & 0 \\ 0 & e^{-r}\end{pmatrix}  \text{for some $r\in\R$.}$$
Since the positive root of $\g=\mf{sl}_2(\K)$ is the map $\begin{psmallmatrix} r & 0 \\ 0 & -r\end{psmallmatrix}\mapsto 2r$ and we assume that $h\in C$, we have $r>0$. The unique non-trivial element $s_1\in W\cong\Z_2$ acts on the one-dimensional space $\a$ as $-\id$. We parametrize the group $\SO(2)\subset K$ as
$$\SO(2)=\bigg\{R_\gamma:=\begin{pmatrix} \cos(\gamma) & -\sin(\gamma) \\ \sin(\gamma) & \cos(\gamma) \end{pmatrix}\,\bigg|\, \gamma\in [0,2\pi)\bigg\}.$$

\begin{prop} \label{prop:SL2Kostant}
	For every $\gamma\in [0,2\pi)$, we have
	$$\Pi(a R_\gamma)=\begin{pmatrix} \sqrt{e^{2r}\cos^2(\gamma)+e^{-2r}\sin^2(\gamma)} & 0 \\ 0 & \frac{1}{\sqrt{e^{2r}\cos^2(\gamma)+e^{-2r}\sin^2(\gamma)}}\end{pmatrix}.$$
\end{prop}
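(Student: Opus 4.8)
The plan is to reduce the statement to the explicit Iwasawa (equivalently, QR) factorization of a single $2\times 2$ matrix. First I would multiply out
$$aR_\gamma=\begin{pmatrix} e^r & 0 \\ 0 & e^{-r}\end{pmatrix}\begin{pmatrix}\cos(\gamma) & -\sin(\gamma) \\ \sin(\gamma) & \cos(\gamma)\end{pmatrix}=\begin{pmatrix} e^r\cos(\gamma) & -e^r\sin(\gamma) \\ e^{-r}\sin(\gamma) & e^{-r}\cos(\gamma)\end{pmatrix}.$$
Since $a$ and $R_\gamma$ are real, this product lies in the real subgroup $\SL_2(\R)\subset\SL_2(\C)$, and $\SO(2)\subset\SU(2)$; hence by uniqueness of the Iwasawa decomposition it suffices to produce a factorization $aR_\gamma=k'a'u'$ with $k'=R_{\gamma'}\in\SO(2)$, with $a'=\diag(t,t^{-1})$ for some $t>0$, and with $u'$ real upper unitriangular, and this factorization is then automatically the Iwasawa decomposition in $G$, regardless of whether $\K=\R$ or $\C$. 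Writing $aR_\gamma=k'(a'u')$ exhibits $a'u'$ as upper triangular with positive diagonal, so the whole task is to identify the entry $t$ of $a'$.

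Next I would extract $t$ from the first column. Denote the first column of $aR_\gamma$ by $(c_1,c_2)^T=(e^r\cos(\gamma),e^{-r}\sin(\gamma))^T$; it is nonzero for every $\gamma$ since $c_1$ and $c_2$ cannot vanish simultaneously. The requirement that the $(2,1)$-entry of $R_{\gamma'}^{-1}(aR_\gamma)$ vanish reads $-\sin(\gamma')c_1+\cos(\gamma')c_2=0$, which forces $(\cos(\gamma'),\sin(\gamma'))$ to be the unit vector in the direction of $(c_1,c_2)$, i.e. $\cos(\gamma')=c_1/\|(c_1,c_2)\|$ and $\sin(\gamma')=c_2/\|(c_1,c_2)\|$. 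Computing the $(1,1)$-entry of $R_{\gamma'}^{-1}(aR_\gamma)$ then gives
$$t=\cos(\gamma')c_1+\sin(\gamma')c_2=\frac{c_1^2+c_2^2}{\|(c_1,c_2)\|}=\|(c_1,c_2)\|=\sqrt{e^{2r}\cos^2(\gamma)+e^{-2r}\sin^2(\gamma)}.$$
Because $\det(aR_\gamma)=1$, the lower diagonal entry of $a'$ is automatically $t^{-1}$, so $\Pi(aR_\gamma)=a'=\diag(t,t^{-1})$ has exactly the asserted form.

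The computation is routine and I do not expect a genuine obstacle; the only points requiring care are bookkeeping. I would fix the sign ambiguity in $(\cos(\gamma'),\sin(\gamma'))$ by choosing the rotation so that $t$ comes out positive (this is precisely why $t=+\|(c_1,c_2)\|$ rather than its negative, and it is what places $k'$ in $\SO(2)$ rather than in the other component of $\op{O}(2)$). I would also record that $\|(c_1,c_2)\|>0$ for all $\gamma$, so $a'$ is genuinely an element of $A$. Finally I would emphasize that $t$ depends only on the first column of $aR_\gamma$; this is what makes the formula transparent and anticipates the general inductive step in Section \ref{nonlinInduction}.
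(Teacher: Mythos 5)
Your argument is correct and is essentially the paper's proof: both compute the QR (Iwasawa) factorization of the explicit matrix $aR_\gamma$, the paper by running Gram--Schmidt on its columns and you by solving directly for the rotation angle that annihilates the $(2,1)$-entry and reading off $t$ from the first column, with the determinant supplying the second diagonal entry. Your observation that the real factorization is automatically the complex Iwasawa decomposition by uniqueness matches the remark the paper makes immediately after the proposition.
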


\begin{proof}
	Throughout this proof, we denote by $(\cdot,\cdot)$ the standard symmetric bilinear or Hermitian form on $\K^2$ and by $||\cdot ||$ the induced norm. To compute the Iwasawa decomposition of $aR_\gamma$, we need to apply the Gram-Schmidt procedure to the column vectors of the matrix
	$$aR_\gamma=\begin{pmatrix} e^r\cos(\gamma) & -e^r\sin(\gamma) \\ e^{-r}\sin(\gamma) & e^{-r}\cos(\gamma) \end{pmatrix}.$$
	Let $\{v_1,v_2\}$ denote the two column vectors of this matrix, then the Gram-Schmidt process turns $\{v_1,v_2\}$ into an orthonormal basis $\{e_1,e_2\}$ of $\K^2$ as follows: First, set $\tilde{e}_1:=v_1$ and
	\begin{align*}
		\tilde{e}_2 &:= v_2-\frac{(v_1,v_2)}{||v_1||^2} v_1=\begin{pmatrix} -e^r\sin(\gamma) \\ e^{-r}\cos(\gamma)\end{pmatrix}-\frac{(e^{-2r}-e^{2r})\cos(\gamma)\sin(\gamma)}{e^{2r}\cos^2(\gamma)+e^{-2r}\sin^2(\gamma)}\begin{pmatrix} e^{r}\cos(\gamma) \\ e^{-r}\sin(\gamma) \end{pmatrix} \\
		&=\frac{1}{e^{2r}\cos^2(\gamma)+e^{-2r}\sin^2(\gamma)}\bigg[\begin{pmatrix} -e^r\sin(\gamma)[e^{2r}\cos^2(\gamma)+e^{-2r}\sin^2(\gamma)] \\ e^{-r}\cos(\gamma)[e^{2r}\cos^2(\gamma)+e^{-2r}\sin^2(\gamma)] \end{pmatrix} \\
		&\quad -\begin{pmatrix} (e^{-2r}-e^{2r})\cos(\gamma)\sin(\gamma)\cdot e^{r}\cos(\gamma) \\ (e^{-2r}-e^{2r})\cos(\gamma)\sin(\gamma)\cdot e^{-r}\sin(\gamma) \end{pmatrix}\bigg] =\frac{1}{e^{2r}\cos^2(\gamma)+e^{-2r}\sin^2(\gamma)}\begin{pmatrix} -e^{-r}\sin(\gamma) \\ e^r \cos(\gamma)\end{pmatrix}.
	\end{align*}
	Then $e_1$ and $e_2$ are obtained by normalizing $\tilde{e}_1$ and $\tilde{e}_2$ and we have
	$$||\tilde{e}_1||=\sqrt{e^{2r}\cos^2(\gamma)+e^{-2r}\sin^2(\gamma)} \qquad \text{and} \qquad ||\tilde{e}_2||=\frac{1}{\sqrt{e^{2r}\cos^2(\gamma)+e^{-2r}\sin^2(\gamma)}}.$$
	Since column operations are realized by multiplication with elementary matrices from the right, it follows that $k'=gu'^{-1}a'^{-1}$ where: 
	\begin{equation}
		\begin{aligned}
			k'&=\frac{1}{\sqrt{e^{2r}\cos^2(\gamma)+e^{-2r}\sin^2(\gamma)}}\begin{pmatrix} e^{r}\cos(\gamma) & -e^{-r}\sin(\gamma) \\ e^{-r}\sin(\gamma) & e^{r}\cos(\gamma) \end{pmatrix}\in \SO(2)\subset K \\
			a'^{-1}&=\begin{pmatrix} \frac{1}{\sqrt{e^{2r}\cos^2(\gamma)+e^{-2r}\sin^2(\gamma)}} & 0 \\ 0 & \sqrt{e^{2r}\cos^2(\gamma)+e^{-2r}\sin^2(\gamma)}\end{pmatrix}\in A \\
			u'^{-1}&=\begin{pmatrix} 1 & -\frac{(e^{-2r}-e^{2r})\cos(\gamma)\sin(\gamma)}{e^{2r}\cos^2(\gamma)+e^{-2r}\sin^2(\gamma)} \\ 0 & 1\end{pmatrix}\in U_+
		\end{aligned} \label{2d_iwasawa}
	\end{equation}
	If we rearrange this expression as $g=k'a'u'$, we conclude that
	$$\Pi(aR_\gamma)=a'=\begin{pmatrix} \sqrt{e^{2r}\cos^2(\gamma)+e^{-2r}\sin^2(\gamma)} & 0 \\ 0 & \frac{1}{\sqrt{e^{2r}\cos^2(\gamma)+e^{-2r}\sin^2(\gamma)}}\end{pmatrix}.$$
\end{proof}

\noindent As we let $\gamma$ run through the interval $[0,2\pi)$, the expression $\sqrt{e^{2r}\cos^2(\gamma)+e^{-2r}\sin^2(\gamma)}$ attains precisely the values in the interval $[e^{-r},e^r]\subset\R_{>0}$, which is in bijective correspondence with $[-r,r]\subset\R$ under the exponential map. Therefore, $\Pi(a R_\gamma)$ takes precisely the values in the exponential of the line segment connecting  
$$h=\begin{pmatrix} r & 0 \\ 0 & -r\end{pmatrix} \qquad \text{with} \qquad s_1(h)=\begin{pmatrix} -r & 0 \\ 0 & r\end{pmatrix},$$
which is the convex hull of the $W$-orbit of $h$. This shows that $\exp(\Xi_h)\subset \Pi(a\SO(2))\subset\Pi(aK)=\Pi_a$. These inclusions are automatically equalities if $\K=\R$, but they are also equalities if $\K=\C$ by Step 1 of the proof. In particular, we see that $\Pi(a\SO(2))=\Pi(a\op{SU(2)})$ in the complex case. 
\medskip \newline
Let now $\mc{D}$ be an arbitrary free and cofree Kac-Moody root datum associated to $\A=(2)$ and let $G$ be the corresponding Kac-Moody group of type $\mc{D}$ over $\K$. Then the subgroup $G_1$ of $G$ generated by $U_{\pm\alpha_1}$ and $T_1:=\{r^{\alpha_1^\vee}\mid r\in\K^\ast\}$ is isomorphic to $\SL_2(\K)$ under the map $\varphi:\SL_2(\K)\to G$ defined by $\varphi\begin{psmallmatrix} 1 & r \\ 0 & 1 \end{psmallmatrix}:=x_{\alpha_1}(r)$, $\varphi\begin{psmallmatrix} 1 & 0 \\ r & 1 \end{psmallmatrix}:=x_{-\alpha_1}(-r)$ for $r\in\K$ and $\varphi\begin{psmallmatrix} r & 0 \\ 0 & r^{-1} \end{psmallmatrix}:=r^{\alpha_1^\vee}$ for $r\in\K^\ast$. If $T_2$ is any complementary subgroup to $T_1$ in $T$, then $G\cong G_1\rtimes T_2$, where the conjugation action of $T_2$ is trivial on $T_1$ and satisfies $tx_{\pm \alpha_1}(r)t^{-1}=x_{\pm\alpha_1}(t(\pm \alpha_1)r)$ for all $t\in T_2$ and $r\in\K$. In particular, if $t(\alpha_1)=1$, then $t$ commutes with $G_1$ and thus lies in the center of $G$. The compact involution of $G$ preserves $G_1$ and under the isomorphism $\varphi$, it corresponds to the usual compact involution $g\mapsto (\overline{g}^{-1})^T$ of $\SL_2(\K)$. Hence, $G_1$ inherits an Iwasawa decomposition from $G$ by intersection, i.e. $G_1=K_1A_1U_1$, where $K_1=K\cap G_1$ is isomorphic to $\SO(2)$ if $\K=\R$ and to $\op{SU}(2)$ if $\K=\C$, $A_1=A\cap G_1=\{r^{\alpha_1^\vee}\mid r\in\R_{>0}\}$ and $U_1=U_+\cap G_1=U_{\alpha_1}$, and these three groups are simply the images under $\varphi$ of the corresponding subgroups in the usual Iwasawa decomposition of $\SL_2(\K)$ described above. \\
In order to deduce the statement $\exp(\Xi_h)\subset\Pi_a$ for $a\in A\subset G$ from that of $\SL_2(\K)$, we claim that there exist $a_1,a_2\in A$ such that $a=a_1a_2$ with $a_1\in A_1$ and $a_2$ in the center of $G$. Indeed, the Cartan subalgebra $\a$ is a direct sum $\a=\a_1\oplus\a_2$, where $\a_1=\R\alpha_1^\vee$ and $\a_2$ is an arbitrary complement, so $h=h_1'+h_2'$ with $h_1'\in\a_1$ and $h_2'\in\a_2$. Set $r:=\alpha_1(h_2')\in\R$ and rewrite $h$ as $h=h_1+h_2$, where $h_1:=h_1'+\frac{r}{2}\alpha_1^\vee\in\a_1$ and $h_2:=h_2'-\frac{r}{2}\alpha_1^\vee$, so that $\alpha_1(h_2)=0$. The exponential $a_2:=\exp(h_2)\in A$ then satisfies $a_2(\alpha_1)=1$, which implies that $a_2$ is central in $G$. In addition, we have $a_1:=\exp(h_1)\in A_1$ and $a=\exp(h)=\exp(h_1+h_2)=a_1a_2$, as claimed. If we choose such a decomposition for our fixed element $a\in A$, then we have $\Pi(ak)=\Pi(a_1k)a_2$ for every $k\in K$. Since we have already shown the result for $\SL_2(\K)$ and $\varphi$ is compatible with the Iwasawa decompositions in $\SL_2(\K)$ and $G_1$, it follows that
\begin{align*}
	\Pi_a\supset\Pi(aK_1) &= \Pi(a_1K_1)a_2\supset\exp(\conv(W\cdot h_1))a_2=\exp(\conv(W\cdot h_1)+h_2) \\
	&=\exp(\conv(W\cdot (h_1+h_2)))=\exp(\conv(W\cdot h))=\exp(\Xi_h).
\end{align*}
Here, the first equality in the second line is justified by the fact that $W\cong \Z_2$ fixes $h_2$ since $\alpha_1(h_2)=0$. Thus, the proof of the case $n=1$ is finished.
\medskip \newline
Assume from now on that $n>1$ is arbitrary. To prepare for the induction step, we need to address a technical issue first. Recall from \eqref{LeviDecomp} and the discussion following it that for any subset $J\subset I$, the parabolic subgroup $P_J\subset G$ has a Levi decomposition $P_J\cong G_J\ltimes U_+^\perp(J)$, where $G_J$ is a Kac-Moody group associated to a free and cofree root datum $\mc{D}(J)$ of rank $d$ with GCM $\A_J$. Hence, $G_J$ has an Iwasawa decomposition $G_J=K_JAU_+(J)$ and a corresponding projection $\Pi_J:G_J\to A$, where $K_J$ is the fixed-point set of the compact involution $\Theta_J:G_J\to G_J$ and $U_+(J)$ is the group generated by all $U_\alpha$ with $\alpha\in \Phi_+(J)$. Observe that the group $A$ has not changed since the root data $\mc{D}$ and $\mc{D}(J)$ have the same free $\Z$-module $\Lambda$. By definition, $\Theta_J$ is simply the restriction of the compact involution $\Theta:G\to G$ to $G_J$, so that $K_J\subset K$ and we also have $U_+(J)\subset U_+$. Hence, if we write an element $g_0\in G_J$ according to its Iwasawa decomposition in $G_J$ as $g_0=k_0a_0u_0$, then $k_0\in K$, $a_0\in A$ and $u_0\in U_+$, so uniqueness implies that this is also the Iwasawa decomposition of $g_0$ in the ambient group $G$ and $\Pi_J=\Pi|_{G_J}$. Conversely, it follows that $G_J$ contains the Iwasawa components of all its elements. The Weyl group of $\A_J$ is the standard subgroup $W_J\subset W$ and we may define an open fundamental chamber $C_J$ and a Tits cone $X_J$ for $W_J$ in $\a$ by
$$C_J:=\{h'\in\a: \langle \alpha_j,h' \rangle>0\,\, \forall j\in J\} \qquad \text{and} \qquad X_J:=\bigcup_{w\in W_J} w(\overline{C_J}).$$
If $J\subsetneq I$, the induction hypothesis applied to $G_J$ yields that for every $a_0=\exp(h_0)\in A$, where $h_0$ is contained in an open chamber of $X_J\subset\a$, we have
	\begin{equation}
		\Pi(a_0K_J)=\Pi_J(a_0K_J)\supset\exp(\conv(W_J\cdot h_0)). \label{InductionPrep1}
	\end{equation}
In case that $J=I\setminus\{i\}$ for some $i\in I$, we set $\A^{(i)}:=\A_{I\setminus\{i\}}$, $C^{(i)}:=C_{I\setminus\{i\}}$ and $X^{(i)}:=X_{I\setminus\{i\}}$ and write the Iwasawa decomposition of $G^{(i)}:=G_{I\setminus\{i\}}$ as $G^{(i)}=K^{(i)}AU_+^{(i)}$. \\
More generally, let $w\in W$ be arbitrary and choose $\tilde{w}\in\tilde{W}\subset K$ such that conjugation by $\tilde{w}$ induces the action of $w$ on $A$ and $\Phi$. The subgroup $G_{w(J)}:=\tilde{w}G_J\tilde{w}^{-1}\subset G$ is isomorphic to $G_J$ and inherits an Iwasawa decomposition $G_{w(J)}=K_{w(J)}AU_+(w(J))$, where $K_{w(J)}=\tilde{w}K_J\tilde{w}^{-1}$ and $U_+(w(J))=\tilde{w}U_+(J)\tilde{w}^{-1}$. Here, we have $K_{w(J)}\subset K$ and $U_+(w(J))$ is the group generated by all $U_{w(\alpha)}$ with $\alpha\in\Phi_+(J)$, but it is not necessarily true that $U_+(w(J))\subset U_+$ since some root $w(\alpha)$ might be negative. However, the definition of $G_{w(J)}$ is independent of the left coset $wW_J\subset W$, so we may assume that $w$ is the unique element of minimal length in $wW_J$. Then $l(w)<l(ws_j)$ for all $j\in J$, which implies that $w(\alpha_j)\in\Phi_+$ for all $j\in J$ by \eqref{PositivityCriterion} and hence $w(\alpha)\in\Phi_+$ for all $\alpha\in\Phi_+(J)$. With this choice, we have $U_+(w(J))\subset U_+$, so that the Iwasawa decomposition of $G_{w(J)}$ is simply the restriction of that of $G$. In particular, $G_{w(J)}$ contains the Iwasawa components of all its elements and the projection $\Pi_{w(J)}:G_{w(J)}\to A$ satisfies $\Pi_{w(J)}=\Pi|_{G_{w(J)}}$ and $\Pi_{w(J)}(\tilde{w}g_0\tilde{w}^{-1})=\tilde{w}\Pi_J(g_0)\tilde{w}^{-1}$ for all $g_0\in G_J$. If $J\subsetneq I$, we conclude from \eqref{InductionPrep1} that for every $a_0=\exp(h_0)\in A$ such that $h_0$ lies in an open chamber of $X_J$, we have
	\begin{equation}
		\begin{aligned}
			\Pi(a_0K_{w(J)}) &= \Pi_{w(J)}(a_0K_{w(J)})=\Pi_{w(J)}(a_0\tilde{w}K_J\tilde{w}^{-1}) =\Pi_{w(J)}(\tilde{w}w^{-1}(a_0)K_J\tilde{w}^{-1}) \\
			&=\tilde{w}\Pi_J(w^{-1}(a_0)K_J)\tilde{w}^{-1}\supset \tilde{w}\exp(\conv(W_J\cdot w^{-1}(h_0)))\tilde{w}^{-1} \\
			&=\exp(w(\conv(W_J\cdot w^{-1}(h_0))))=\exp(\conv(wW_Jw^{-1}\cdot h_0)).
		\end{aligned} \label{InductionPrep2}
	\end{equation}
We now continue with the proof of $\exp(\Xi_h)\subset\Pi_a$. Our strategy is to ``sweep out'' $\Xi_h$ by parallel translates of hyperplanes to which we may apply the induction hypothesis. We proceed as in Section \ref{regularOrbit}. Let $i\in I$ be arbitrary and consider the hyperplanes $H^{(i)}(t):=\{y\in E_h: \langle \omega_i,y\rangle=t\}$ for $t\in\R$. We have seen in \eqref{XiFoliation} that $\Xi_h=\bigcup_{t\in I_h} P^{(i)}(t)$, where $P^{(i)}(t)=\Xi_h\cap H^{(i)}(t)$ and $I_h\subset\R$ is the interval for which this intersection is not empty. Thus, it suffices to show that $\exp(P^{(i)}(t))\subset\Pi_a$ for every $t\in I_h$. \\
Fix an arbitrary $t\in I_h$. First, we are going to show that for every face $F'\subset P^{(i)}_{\op{ess}}(t)$, we have $\exp(F')\subset\Pi_a$. Observe that if $F'$ is contained in the essential part, it has to be a proper face of $P^{(i)}(t)$, which implies $\op{dim}(F')<\op{dim}(P^{(i)}(t))=n-1$. By \eqref{FaceIntersection}, there exists a unique face $F$ of $\Xi_h$ such that $F'=F\cap H^{(i)}(t)$ and it satisfies $\op{dim}(F)\leq\op{dim}(F')+1<n$, so $F$ is a proper face of $\Xi_h$. By Theorem \ref{thm:dualcoxeter_iso}, there is a unique proper subset $J\subsetneq I$ and some $w\in W$ such that 
	$$F=w(F_J)=\op{conv}(wW_J\cdot h)=\op{conv}(wW_Jw^{-1}\cdot w(h)).$$
Applying \eqref{InductionPrep2} to the subgroup $G_{w(J)}\subset G$ and the point $a_0=w(a)$ yields that
	$$\exp(F')\subset\exp(F)=\exp(\op{conv}(wW_Jw^{-1}\cdot w(h)))\subset \Pi(w(a)K_{w(J)})=\Pi(aK_J\tilde{w}^{-1})\subset\Pi_a,$$
where $\tilde{w}\in\tilde{W}\subset K$ is a representative of $w$ in the extended Weyl group. Since the face $F'$ was arbitrary, we have shown that $\exp(P^{(i)}_{\op{ess}}(t))\subset\Pi_a$. For the next step, however, the more precise statement $\exp(F')\subset \Pi(w(a)K_{w(J)})$ will be important. \\	
Let now $a''\in\exp(P^{(i)}(t))$ be arbitrary and set $h'':=\log(a'')\in P^{(i)}(t)$. By Proposition \ref{prop:essentialHull}, there exists a point $h'\in P^{(i)}_{\op{ess}}(t)$ such that $h''\in\conv(W^{(i)}\cdot h')$. Let $F'\subset P^{(i)}_{\op{ess}}(t)$ be a face of the essential part containing $h'$ and set $a':=\exp(h')$. By what we have just discussed, there exist $w\in W$ and a proper subset $J\subsetneq I$ such that $F'\subset F=w(F_J)$ and $a'\in\Pi(w(a)K_{w(J)})$. Hence, we find $k_1,k'\in K_{w(J)}$ and $u'\in U_+(w(J))$ such that
\begin{equation}
	w(a)k_1=k'a'u'. \label{Iwasawa1}
\end{equation}
Now let $P^{(i)}:=P_{I\setminus\{i\}}$ be the maximal parabolic subgroup of type $I\setminus\{i\}$ with Levi decomposition $P^{(i)}\cong G^{(i)}\ltimes U_+^\perp(I\setminus\{i\}),$ where the Levi factor $G^{(i)}$ is generated by $T$ and all root groups $U_\alpha$ with $\alpha\in\Phi^{(i)}:=\Phi(I\setminus\{i\})$ and the unipotent radical $U_+^\perp(I\setminus\{i\})$ is generated by all $U_\alpha$ with $\alpha\in\Phi_+\setminus\Phi^{(i)}$. As $h'\in P^{(i)}_{\op{ess}}(t)\subset C^{(i)}$ is contained in an open chamber of $X^{(i)}$, we may apply \eqref{InductionPrep2} to the subgroup $G^{(i)}\subset G$ and the point $a'\in A$ to conclude that $\exp(\conv(W^{(i)}\cdot h'))\subset\Pi(a'K^{(i)})$. Thus, we find $k_2,k''\in K^{(i)}$ and $u''\in U_+^{(i)}$ such that
\begin{equation}
	a'k_2=k''a''u''. \label{Iwasawa2}
\end{equation}
Combining \eqref{Iwasawa1} and \eqref{Iwasawa2} leads to
\begin{equation}
	w(a)k_1k_2=k'a'u'k_2=k'a'k_2(k_2^{-1}u'k_2)=k'k''a''u''(k_2^{-1}u'k_2). \label{Iwasawa3}
\end{equation}
We claim that $k_2^{-1}u'k_2\in U_+$. In general, it is rarely the case that a $K$-conjugate of some element in $U_+$ also lies in $U_+$. In our situation, this is a consequence of our construction. \\
Since $u'\in U_+(w(J))$, we can write it as a product $u'=u_1\ldots u_s$ such that for every $r\in\{1,\ldots,s\}$, $u_r$ is an element of a root group $U_{\beta_r}$, where $\beta_r\in\Phi_+$ and $\gamma_r:=w^{-1}(\beta_r)\in\Phi(J)$. We claim that $\beta_r\in\Phi_+\setminus\Phi^{(i)}$ for every $r\in\{1,\ldots,s\}$. Assume for a contradiction that $\beta_r\in\Phi^{(i)}$ for some $r\in\{1,\ldots,s\}$, then the reflection $s_{\beta_r}=ws_{\gamma_r}w^{-1}$ would be an element of $W^{(i)}$. In addition, $s_{\gamma_r}\in W_J$ stabilizes $F_J=\conv(W_J\cdot h)$, so $s_{\beta_r}$ stabilizes $w(F_J)=\conv(wW_Jw^{-1}\cdot w(h))=F$. Hence, $s_{\beta_r}$ stabilizes $F\cap H^{(i)}(t)=F'$. This is a contradiction because $F'$ is contained in the essential part of $P^{(i)}(t)$, which is a subset of the open fundamental chamber $C^{(i)}$ of $X^{(i)}$, and is therefore not invariant under any non-trivial element of $W^{(i)}$. It follows that $\beta_r\in\Phi_+\setminus\Phi^{(i)}$, as claimed. This implies that for every $r\in\{1,\ldots,s\}$, $u_r$ is contained in the unipotent radical of the maximal parabolic subgroup $P^{(i)}\subset G$. Hence, $u'$ is also contained in the unipotent radical, while $k_2\in K^{(i)}\subset G^{(i)}$ is contained in the Levi factor of $P^{(i)}$. Thus, $k_2^{-1}u'k_2$ is also contained in the unipotent radical of $P^{(i)}$ and hence in $U_+$. This shows that the expression on the right-hand side of \eqref{Iwasawa3} is indeed the Iwasawa decomposition of $w(a)k_1k_2$ in $G$ and we obtain that $a''=\Pi(w(a)k_1k_2)\subset\Pi(w(a)K)=\Pi_a$. Since $a''\in\exp(P^{(i)}(t))$ was arbitrary, we conclude that $\exp(P^{(i)}(t))\subset\Pi_a$ for all $t\in I_h$, which finishes the proof of Theorem \ref{thm:KMKostant}.

\subsection{A linear analogue} \label{LinearAnalogue}

Theorem \ref{thm:KMKostant} is ``non-linear'' in the sense that it deals with the Iwasawa decomposition on the group level, but using very similar methods, it is possible to prove a ''linear'' analogue on the level of Kac-Moody algebras. A variant of this result has already been established by V.G. Kac and D.H. Peterson in \cite[Theorem 2 b)]{KP2}. Their proof relies on transferring concepts from symplectic geometry, in particular the \textit{moment map}, to highest-weight modules of Kac-Moody algebras. Our approach allows to show the following result. \\
Let $\mc{D}$ be a free and cofree Kac-Moody root datum associated to a symmetrizable GCM $\A$ and let $\g$ be the real or complex Kac-Moody algebra of type $\mc{D}$. The same arguments as in the well-known finite-dimensional case show that $\g$ has an Iwasawa decomposition $\g=\k\oplus\a\oplus\mf{u}_+$, where $\k$ is the fixed-point set of the compact involution of $\g$. We denote by $\pi:\g\to\a$ the corresponding linear projection. The group $G$ acts on $\g$ via the adjoint representation $\Ad:G\to \GL(\g)$ and the following ``linear'' convexity theorem holds.

\begin{thm} \label{thm:linear_kostant}
	If $h\in\a$ is contained in an open chamber of the Tits cone $X\subset\a$, then
	$$\{\pi(\Ad(k)h)\mid k\in K\}=\conv(W\cdot h).$$
\end{thm}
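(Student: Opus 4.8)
The plan is to prove the two inclusions separately, following the architecture of the proof of Theorem~\ref{thm:KMKostant} but replacing group-level Iwasawa products by the additive decomposition $\Ad(k)h = X_{\mathfrak{k}} + \pi(\Ad(k)h) + X_{\mathfrak{u}_+}$ coming from $\g=\k\oplus\a\oplus\mathfrak{u}_+$. As there, both sides are insensitive to replacing $h$ by a $W$-translate: $\conv(W\cdot h)$ obviously is, and for the projection set one has $\{\pi(\Ad(k)w(h))\mid k\in K\}=\{\pi(\Ad(k)\Ad(\tilde w)h)\mid k\in K\}=\{\pi(\Ad(k\tilde w)h)\mid k\in K\}=\{\pi(\Ad(k)h)\mid k\in K\}$, since $\tilde w\in\tilde W\subset K$ so $K\tilde w=K$. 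Hence I may assume $h\in C$ and abbreviate $\Xi_h=\conv(W\cdot h)$.

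For the inclusion $\{\pi(\Ad(k)h)\mid k\in K\}\subseteq\Xi_h$ I would argue directly with the contravariant forms of Proposition~\ref{prop:AssumptionConsequences}, which is cleaner than the ellipsoid estimate of Step~1 of Theorem~\ref{thm:KMKostant}. Put $h':=\pi(\Ad(k)h)$; by Corollary~\ref{cor:inequality1} it suffices to check $\langle\omega_i,w(h')\rangle\le\langle\omega_i,h\rangle$ for all $w\in W$, $i\in I$, together with $\langle\phi_i,h'\rangle=\langle\phi_i,h\rangle$ for $i>n$. Fix a unit weight vector $v\in L(\omega_i)$ of weight $\mu:=w^{-1}(\omega_i)$ and examine $H(\rho_i(\Ad(k)h)v,v)$. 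Using $\rho_i(\Ad(k)h)=\hat\rho_i(k)\rho_i(h)\hat\rho_i(k)^{-1}$ and the $H$-unitarity of $K$ from \eqref{ContravariantForm2}, this equals $H(\rho_i(h)v',v')$ for a unit vector $v'$; as $\rho_i(h)$ is self-adjoint with real eigenvalues $\langle\omega_i-\alpha,h\rangle\le\langle\omega_i,h\rangle$ (where $\alpha\in Q_+$, using $h\in C$ and freeness of $\mc D$), this is a real number $\le\langle\omega_i,h\rangle$. Expanding the same quantity through the Iwasawa decomposition of $\Ad(k)h$, the $\k$-part acts $H$-skew-adjointly (contravariance with $\theta(X_{\mathfrak{k}})=X_{\mathfrak{k}}$) and so has vanishing real part, the $\mathfrak{u}_+$-part shifts weights by \eqref{WeightSpaceTranslation} and is $H$-orthogonal to $v$, and the $\a$-part contributes $\langle\mu,h'\rangle=\langle\omega_i,w(h')\rangle$. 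Comparing real parts gives the inequality; the $\phi_i$-equalities come from the identical computation on the one-dimensional modules $L(\phi_i)$, on which $\mathfrak{u}_+$ acts trivially.

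For the reverse inclusion $\Xi_h\subseteq\{\pi(\Ad(k)h)\mid k\in K\}$ I would induct on $n=|I|$. In the base case $n=1$ the computation replacing Proposition~\ref{prop:SL2Kostant} is transparent: for $h=\diag(r,-r)\in\mathfrak{sl}_2$ and $R_\gamma\in\SO(2)\subseteq K$, a direct conjugation shows $\pi(\Ad(R_\gamma)h)=\diag(r\cos 2\gamma,-r\cos 2\gamma)$, which sweeps out exactly the segment $\conv(W\cdot h)$ as $\gamma$ varies, and a general free and cofree rank-one datum reduces to this via $\g\cong\mathfrak{sl}_2\oplus\z$ with $\z$ central (writing $h=h_1+h_2$ with $h_2$ central gives $\pi(\Ad(k)h)=\pi(\Ad(k)h_1)+h_2$). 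For $n>1$ I reuse the convex-geometric machinery of Chapter~\ref{ConvexHulls}: foliate $\Xi_h=\bigcup_{t}P^{(i)}(t)$ and fix $t$. For a point $h'$ in a face of the essential part $P^{(i)}_{\op{ess}}(t)$, Theorem~\ref{thm:dualcoxeter_iso} places it in a proper face $F=w(F_J)$ with $J\subsetneq I$ and $w$ of minimal length in $wW_J$; the linear analogue of \eqref{InductionPrep2} (the induction hypothesis on the Levi subalgebra $\g_J$, whose Iwasawa decomposition restricts from that of $\g$) realizes $h'=\pi(\Ad(\tilde wk_0)h)$ with $k_0\in K_J$ and, crucially, with $\mathfrak{u}_+$-part $Y_{\mathfrak{u}_+}\in\mathfrak{u}_+(w(J))$. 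For arbitrary $h''\in P^{(i)}(t)$, Proposition~\ref{prop:essentialHull} gives $h''\in\conv(W^{(i)}\cdot h')$ for such an $h'$, and the induction hypothesis for $G^{(i)}=G_{I\setminus\{i\}}$ realizes $h''=\pi(\Ad(k_2)h')$ with $k_2\in K^{(i)}$. Composing the two adjoint actions, $\Ad(k_2\tilde wk_0)h$ decomposes as $h''$ plus a $\k$-term plus $\Ad(k_2)Y_{\mathfrak{u}_+}$ plus the $\mathfrak{u}_+^{(i)}$-term of the second step.

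The main obstacle, exactly as on the group level, is to certify that this composite is genuinely the Iwasawa decomposition, i.e.\ that $\Ad(k_2)Y_{\mathfrak{u}_+}\in\mathfrak{u}_+$, so that $\pi(\Ad(k_2\tilde wk_0)h)=h''$. I would resolve it with the same root-theoretic argument used for the claim $k_2^{-1}u'k_2\in U_+$ in Theorem~\ref{thm:KMKostant}: each root $\beta$ occurring in $Y_{\mathfrak{u}_+}\in\mathfrak{u}_+(w(J))$ has the form $\beta=w(\gamma)$ with $\gamma\in\Phi(J)$, and were some $\beta\in\Phi^{(i)}$, the reflection $s_\beta=ws_\gamma w^{-1}\in W^{(i)}$ would stabilize $F$ and hence $F'=F\cap H^{(i)}(t)$, contradicting that essential faces lie in the open chamber $C^{(i)}$ and admit no nontrivial $W^{(i)}$-symmetry. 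Thus $\beta\in\Phi_+\setminus\Phi^{(i)}$, so $Y_{\mathfrak{u}_+}$ lies in the nilradical $\bigoplus_{\beta\in\Phi_+\setminus\Phi^{(i)}}\g_\beta$ of $\mathfrak{p}^{(i)}$, which by the Levi decomposition \eqref{LeviDecomp} is $\Ad(K^{(i)})$-stable; hence $\Ad(k_2)Y_{\mathfrak{u}_+}\in\mathfrak{u}_+$, completing the induction. I expect this verification — the bookkeeping of restricting Iwasawa data to Levi subalgebras together with this stability claim — to be the only genuinely delicate point, the first inclusion being essentially a one-line contravariance computation.
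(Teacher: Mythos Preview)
Your proposal is correct and follows the paper's own architecture closely. For the inclusion $\pi_h\subset\Xi_h$, your direct evaluation of $H(\rho_i(\Ad(k)h)v,v)$ via contravariance is a genuine simplification over the paper's route, which instead bounds $\|\rho_i(\Ad(k)h)v\|$ by the ellipsoid argument of Section~\ref{MainTheoremProof_Step1} (after shifting $\rho_i(h)$ by a scalar to make its eigenvalues non-negative) and then separately invokes $H(v,\rho_i(X')v)=H(v,\rho_i(Y')v)=0$; your Rayleigh-quotient bound on the self-adjoint operator $\rho_i(h)$ bypasses the norm detour entirely. For the reverse inclusion your induction is exactly the paper's, and your treatment of the key step $\Ad(k_2)Y_{\mf{u}_+}\in\mf{u}_+$ via the geometric stabilizer argument is the same mechanism used in the non-linear proof; the paper merely rephrases the $\Ad(K^{(i)})$-stability of the nilradical as a consequence of the adjoint analogue of \eqref{RootGroupAction} (the action of $G^{(i)}$ shifts the root grading only within $\bigoplus_{j\ne i}\Z\alpha_j$, hence preserves the $\alpha_i$-coefficient).

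One small caveat: your phrase ``each root $\beta$ occurring in $Y_{\mf{u}_+}$ has the form $\beta=w(\gamma)$ with $\gamma\in\Phi(J)$'' and the ensuing reflection $s_\beta=ws_\gamma w^{-1}$ tacitly treat all such $\gamma$ as real, but $\mf{u}_+(w(J))$ can contain imaginary root spaces when $\A_J$ is not of finite type, and then $s_\gamma$ is undefined. The fix is immediate: run your stabilizer argument only on the simple roots $\alpha_j$ ($j\in J$), which are real, to conclude $i\in\op{supp}(w(\alpha_j))$ for every $j\in J$; since each $w(\alpha_j)\in Q_+$ (as $w$ is minimal in $wW_J$) with positive $\alpha_i$-coefficient, and every positive root $\gamma$ with $\op{supp}(\gamma)\subset J$ is a non-negative integer combination of these, $w(\gamma)$ also has positive $\alpha_i$-coefficient. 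This places $Y_{\mf{u}_+}$ in the nilradical as required.
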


\noindent The result in \cite[Theorem 2 b)]{KP2} is less general in the sense that it only considers the ``classical'' Kac-Moody algebra $\g(\A)$ and is therefore restricted to one specific root datum, but in contrast to Theorem \ref{thm:linear_kostant}, it only requires $h$ to be contained in the Tits cone $X\subset\a$ rather than an open chamber. Although Theorem \ref{thm:KMKostant} may appear to be an ``exponentiated version'' of Theorem \ref{thm:linear_kostant}, the authors of the present article are not aware whether one of the results can be deduced from the other. One major obstruction to a naive approach is that for $k\in K$ one typically has
	\begin{equation}
		\Pi\circ\op{conj}_k\circ\exp|_{\a}\neq \exp\circ\pi\circ\Ad(k)|_{\a}, \label{ProjectionDifference}
	\end{equation}
where $\op{conj}_k$ denotes conjugation by $k$ in $G$. A counterexample is already easy to find in the most basic example $G=\SL_2(\R)$. Only when an element $h$ in an open chamber of $X$ is fixed and $k$ runs through the entire group $K$ do the images of the resulting two maps coincide. A direct proof of this assertion would be a very interesting achievement that might lead to various generalizations of Theorem \ref{thm:KMKostant}. In the case where $G$ is a connected semisimple Lie group, such a ``linear'' convexity theorem has already been established by Kostant in \cite[Theorem 8.2]{Kostant} and a direct relation between the linear and the non-linear result was found by J.J. Duistermaat in \cite[Theorem 1.1]{Duistermaat} by constructing diffeomorphisms of $K$ that resolve the inequality in \eqref{ProjectionDifference}. It relies heavily on analytic properties of Lie groups and therefore does not generalize to the Kac-Moody setting in a straightforward way.
\newpage
\noindent Although a direct connection between the linear and non-linear convexity theorem for Kac-Moody groups does not seem to be readily available, it is possible to prove Theorem \ref{thm:linear_kostant} with almost the same methods as Theorem \ref{thm:KMKostant} and we briefly describe how this can be achieved. Using the extended Weyl group $\tilde{W}\subset K$, it is easy to show that the set $\pi_h$ on the left-hand side in Theorem \ref{thm:linear_kostant} does not depend on the $W$-translate of $h$, so we may again w.l.o.g. assume that $h\in C$. Set $\Xi_h:=\conv(W\cdot h)$, let $k\in K$ be arbitrary and write
	\begin{equation}
		\Ad(k)h=X'+h'+Y' \qquad \text{with $X'\in\k$, $h'\in\a$ and $Y'\in\mf{u}_+$.} \label{RepDecomp1}
	\end{equation}
In order to show that $h'=\pi(\Ad(k)h)\in\pi_h$ is contained in $\Xi_h$, it suffices again to verify the inequalities in Corollary \ref{cor:inequality1}, i.e. $\langle\omega_i,w(h')\rangle\leq\langle\omega_i,h\rangle$ for all $w\in W$ and $i\in I$ and $\langle\phi_i,h'\rangle=\langle\phi_i,h\rangle$ for all $i\in\{n+1,\ldots,d\}$. These assertions follow as before by comparing the actions of $h$ and $h'$ in the highest-weight modules $L(\omega_i)$ and $L(\phi_i)$ endowed with a positive definite, contravariant, symmetric bilinear or Hermitian form $H$. A crucial ingredient is the formula
	\begin{equation}
		\rho_i(\Ad(k)h)=\hat{\rho}_i(k)\rho_i(h)\hat{\rho}_i(k)^{-1}, \label{RepDecomp2}
	\end{equation}
which holds more generally for integrable modules of Kac-Moody algebras. One then compares the two different ways of computing the action of $\Ad(k)h$ under $\rho_i$ provided by \eqref{RepDecomp1} and \eqref{RepDecomp2}. The equation $\langle\phi_i,h'\rangle=\langle\phi_i,h\rangle$ follows from the fact that $L(\phi_i)$ is one-dimensional, so \eqref{RepDecomp2} reduces to $\rho_i(h)$, and the observation that $\rho_i(X')=\rho_i(Y')=0$. To prove the inequalities $\langle\omega_i,w(h')\rangle\leq\langle\omega_i,h\rangle$ for $w\in W$ and $i\in I$, one studies the action of $\Ad(k)h$ on a normalized weight-$\mu$ vector $v\in L(\omega_i)_\mu$, where $\mu:=w^{-1}(\omega_i)$. On the one hand, the same geometric argument as in the non-linear case can be applied to the action given by \eqref{RepDecomp2} to obtain an upper bound for the norm of $\rho_i(\Ad(k)h)v$, but one has to take care of the possibility that some eigenvalues of $\rho_i(h)$ might be negative. This issue is easily dealt with by adding a suitable scalar multiple of the identity to $\rho_i(h)$, so that all eigenvalues become non-negative. On the other hand, it is not difficult to show that $H(v,\rho_i(X')v)=H(v,\rho_i(Y')v)=0$ which allows to compare the value of $H(\rho_i(\Ad(k)h)v,v)$ obtained from the two equations. These ideas prove that $\pi_h\subset\Xi_h$. \\
The converse inclusion follows again by induction on $n$, reducing the case $n=1$ to $\g=\mf{sl}_2(\K)$ and its usual Iwasawa decomposition by a similar argument, where the assertion is a straightforward computation. One then chooses an arbitrary $i\in I$ and studies the adjoint actions of the same subgroups $G_{w(J)}$ and $G^{(i)}$ on $h$. The geometric setup of the proof is the same as in the non-linear case, i.e. for fixed $t\in I_h$, one first covers faces of the essential part $P^{(i)}_{\op{ess}}(t)$ using the adjoint action of some subgroup $K_{w(J)}\subset G_{w(J)}$ and then uses the action of $K^{(i)}\subset G^{(i)}$ to cover the remaining faces of $P^{(i)}(t)$. Again, the crucial step is to ensure that the repeated applications of these subgroups respect the Iwasawa decompositions. More precisely, one needs to show that if $k_1\in K_{w(J)}$ and $Y$ denotes the $\mf{u}_+$-component of $\Ad(k_1)h$, then $\Ad(k_2)Y\in\mf{u}_+$ for every $k_2\in K^{(i)}$. Here, the proof does not require a geometric argument, the assertion is a consequence of \eqref{RootGroupAction} which holds in a very similar form for the adjoint representation. These arguments establish that $\Xi_h\subset\pi_h$ and conclude the proof of Theorem \ref{thm:linear_kostant}.

\subsection{Concluding remarks} \label{ConcludingRemarks}

We close our discussion by relating Theorem \ref{thm:KMKostant} to Kostant's original convexity theorem \cite[Theorem 4.1]{Kostant} and describing an application of our result to decompositions of Kac-Moody groups as well as some generalizations and open problems. 
\medskip \newline
If every indecomposable submatrix of $\A$ is of finite type, then $\A$ is the Cartan matrix of a finite-dimensional complex semisimple Lie algebra and the Kac-Moody groups associated to $\A$ essentially coincide with the corresponding Chevalley groups. In that case, $\A$ is invertible, so that $\mc{D}^{\A}_{\text{Kac}}=\mc{D}^{\A}_{\text{sc}}$, and symmetrizable. If $\mc{D}=\mc{D}^{\A}_{\text{Kac}}$, then $\g_\C$ is the complex semisimple Lie algebra with Cartan matrix $\A$ and $\g_\R$ is its split real form. Likewise, for $\K\in\{\R,\C\}$, $G=G(\K)$ is the group of $\K$-points of the corresponding simply connected Chevalley-Demazure group scheme associated to $\A$. For an arbitrary free and cofree root datum of rank $d\geq 2n-\op{rank}(\A)=n$, one obtains semidirect extensions of these groups and algebras. We can therefore view $G$ as the group of $\K$-points of a connected reductive $\R$-split algebraic $\R$-group and hence as a split real or complex Lie group with a reductive Lie algebra and finitely-many connected components in its Lie group topology. The existence of an Iwasawa decomposition for connected semisimple Lie groups is a classical result in Lie theory and has been generalized to groups of $\R$-points of connected reductive algebraic $\R$-groups in \cite[Proposition 3.13]{PR}. We also have $X=\a$, so every element is either regular or singular. Theorem \ref{thm:KMKostant} differs from \cite[Theorem 4.1]{Kostant} in the following two aspects:
\begin{itemize}
	\item[(i)] On the one hand, Kostant's result is a statement about arbitrary connected semisimple Lie groups. Not every such group arises as the group of $\K$-points of an algebraic $\R$-group, e.g. the universal cover of $\SL_2(\R)$. Moreover, by our choice of definition, the real Kac-Moody groups associated to GCM's of finite type are $\R$-split, which need not be the case for general semisimple Lie groups. On the other hand, our result does not require $G$ to be connected and allows the Lie algebra of $G$ to be reductive instead of semisimple.
	\item[(ii)] In contrast to Kostant's theorem, our result currently requires the assumption that $h$ is a regular element of $\a$. 
\end{itemize}
The differences in (i) are entirely due to the setup that we work in and not necessary for our proof which only involves algebraic and representation-theoretic arguments. Our method does not require a matrix realization and applies equally well to semisimple real Lie groups whose Lie algebra is not necessarily a split real form of its complexification. Such a group $G$ has similar structural and representation-theoretic properties, where the role of $T$ is played by a maximal $\R$-split torus $T'\subset G$. Then $G$ is generated by $T'$ and appropriate root groups, which need not be one-dimensional in general, indexed by the roots of the (possibly non-reduced) root system arising from the adjoint action of $T'$ on the Lie algebra of $G$. The second restriction can also be lifted since the Weyl group $W$ of a semisimple Lie group is finite and the convex hulls of $W$-orbits of points in $\a$ are compact. This has the consequence that if $h\in\a$ is an arbitrary (possibly singular) point in $\a$ and $(h_k)_{k\in\N}$ is a sequence of regular elements converging to $h$, which always exists in the Lie group case, then $(\Xi_{h_k})_{k\in\N}$ converges to $\Xi_h$ with respect to the Hausdorff distance in $\a$ by finiteness of $W$. Likewise, $(a_k)_{k\in\N}$ with $a_k:=\exp(h_k)$ converges to $a:=\exp(h)$ by continuity of the exponential map and $(\Pi_{a_k})_{k\in\N}$ converges to $\Pi_a$ with respect to the Hausdorff distance in $A$ by continuity of the Iwasawa projection $\Pi:G\to A$. Thus, Theorem \ref{thm:KMKostant} extends to singular elements if $W$ is finite by approximation. Alternatively, it is also possible to repeat our proof of Theorem \ref{thm:KMKostant} for singular elements. The crucial step is to generalize Corollary \ref{cor:inequality1} and Proposition \ref{prop:essentialHull} to singular points. In fact, that $\Xi_h$ is defined by the same inequalities as in the regular case has already been shown by Kostant in \cite[Lemma 3.3]{Kostant} and also follows from our discussion since $\Xi_h$ is automatically closed if $W$ is finite. The validity of Proposition \ref{prop:essentialHull} in the singular case can be obtained by approximating $h$ by regular elements as above. Thus, our proof of Theorem \ref{thm:KMKostant} does indeed provide an alternative proof of Kostant convexity in connected semisimple Lie groups as well. However, we strongly emphasize that such an approximation technique does not work in the general Kac-Moody setting because if $W$ is infinite, the existence of unbounded faces causes the convex hulls of $W$-orbits of different points to have unbounded Hausdorff distance, so there is no hope of convergence in this case. In addition, contrary to Lie groups, Kac-Moody groups usually do not admit a Cartan decomposition $G=KAK$ (cf. \cite[Proposition 6.3 (iii)]{Horn}), so an analogue of \cite[Corollary 4.2]{Kostant} that describes the set $\{\Pi(gk)\mid k\in K\}$ for an arbitrary $g\in G$ does not automatically follow in our situation.
\medskip \newline
The non-existence of this result gives rise to another interesting question concerning the algebraic structure of Kac-Moody groups. Using \cite[Corollary 4.2]{Kostant}, Kostant concludes that every semisimple Lie group $G$ has a decomposition $G=KU_+K$ (cf. \cite[Theorem 5.1]{Kostant}). For the corresponding symmetric space $G/K$, this result implies that any two points of $G/K$ are contained in a common horosphere. In the general Kac-Moody setting, however, Kostant's arguments do not apply and the question whether a similar decomposition result holds for all Kac-Moody groups appears to have been open (cf. \cite[Question 6.8]{Horn}). Our main result yields a negative answer to this question and also provides a slightly more precise result in the Lie group case.

\begin{cor} \label{cor:HoroDecomp}
	Let $\A$ be a symmetrizable GCM of size $n$, $\mc{D}$ a free and cofree Kac-Moody root datum of rank $d\geq 2n-\op{rank}(\A)$ associated to $\A$, $G$ the real or complex split Kac-Moody group of type $\mc{D}$ and $G=KAU_+$ an Iwasawa decomposition of $G$. Suppose that one of the following two conditions is satisfied:
	\begin{itemize}
		\item[(i)] $d>n$.
		\item[(ii)] $d=n$ and not every indecomposable submatrix of $\A$ is of finite type.
	\end{itemize}
	Then $G\neq KU_+K$.
\end{cor}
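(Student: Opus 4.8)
The plan is to argue the contrapositive through the projection $\Pi$. The key observation is that if $G=KU_+K$ held, then the identity $1\in A$ could always be realised as $\Pi(ak)$, and Theorem \ref{thm:KMKostant} would then force the origin into $\Xi_h=\conv(W\cdot h)$. So it will suffice, in each of the two cases, to exhibit a point $h$ in the open fundamental chamber $C$ with $0\notin\Xi_h$ and to put $a:=\exp(h)$. First I would record the reduction. Assume $G=KU_+K$ and let $a=\exp(h)$ with $h\in C$. Writing $a=k_1uk_2$ with $k_1,k_2\in K$ and $u\in U_+$ gives $ak_2^{-1}=k_1u$, hence
$$\Pi(ak_2^{-1})=\Pi(k_1u)=\Pi(u)=1,$$
using left $K$-invariance of $\Pi$ and the fact that $u=1\cdot 1\cdot u$ is the Iwasawa decomposition of $u\in U_+$. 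Thus $1\in\Pi_a$, and since $h\in C$ lies in an open chamber of $X$, Theorem \ref{thm:KMKostant} yields $1\in\exp(\Xi_h)$, i.e. $0\in\Xi_h$. By Corollary \ref{cor:inequality1} applied to $h'=0$ (and $w(0)=0$ for all $w\in W$), this is equivalent to
$$\langle\omega_i,h\rangle\geq 0 \ \text{for all } i\in I \qquad\text{and}\qquad \langle\phi_i,h\rangle=0 \ \text{for all } i\in\{n+1,\dots,d\}.$$
It therefore remains to produce $h\in C$ violating one of these conditions.

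In case (i), where $d>n$, I would violate the equalities. Each $\phi_i$ with $i>n$ is a nonzero member of the dual basis $\mc{B}^\ast$, so its kernel is a hyperplane of empty interior and cannot contain the nonempty open set $C$. Choosing $h\in C$ with $\langle\phi_{n+1},h\rangle\neq 0$ then gives $0\notin\Xi_h$, since $\Xi_h$ is contained in the affine subspace $E_h=h+\a'$, which misses the origin precisely because $h\notin\a'$.

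In case (ii), where $d=n$, the equalities are vacuous, so instead I must arrange $\langle\omega_{i_0},h\rangle<0$ for some $i_0$. Here freeness and cofreeness force $d=n\geq 2n-\op{rank}(\A)$, whence $\op{rank}(\A)=n$ and $\A$ is invertible; every indecomposable block of $\A$ is then invertible, so none is affine, and the hypothesis that not every block is of finite type produces an indecomposable block $\A_{k_0}$ of indefinite type. Setting $\gamma_j:=\langle\omega_j,h\rangle$, so that $h=\sum_j\gamma_j\alpha_j^\vee$ and $\langle\alpha_i,h\rangle=\sum_j a_{ji}\gamma_j=(\A^T\gamma)_i$, membership $h\in C$ is equivalent to $\A^T\gamma>0$, and I want such a $\gamma$ with a negative entry. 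Assume for contradiction that $(\A^T)^{-1}\beta\geq 0$ for every $\beta>0$; by continuity this persists for all $\beta\geq 0$, so $(\A^T)^{-1}$, and hence $\A^{-1}$, is entrywise nonnegative. Passing to the block $\A_{k_0}$, the vector $u:=\A_{k_0}^{-1}\mathbf{1}\geq 0$ satisfies $\A_{k_0}u=\mathbf{1}>0$, contradicting the criterion (Ind) for $\A_{k_0}$, which asserts that $\A_{k_0}\lambda\geq 0$ and $\lambda\geq 0$ force $\lambda=0$. Hence some $\beta>0$ yields $\gamma=(\A^T)^{-1}\beta$ with a negative coordinate, and the associated $h$ lies in $C$ with $0\notin\Xi_h$, completing both cases.

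The reduction in the first paragraph and the argument in case (i) are essentially formal consequences of Theorem \ref{thm:KMKostant} and the description of $E_h$. The hard part is the linear-algebra step in case (ii): one has to translate "$0\notin\conv(W\cdot h)$ for some regular $h$" into inverse-nonnegativity of $\A$ and then defeat it. The crux is recognising that the failure of $\A^{-1}\geq 0$ is exactly what indefiniteness provides, so that only the (Ind) half of the finite/affine/indefinite trichotomy is needed; isolating an indefinite block and checking that invertibility rules out the affine case is where the care is required.
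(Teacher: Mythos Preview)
Your proof is correct and follows the paper's strategy: reduce via Theorem \ref{thm:KMKostant} and Corollary \ref{cor:inequality1} to exhibiting $h\in C$ with $0\notin\Xi_h$, then treat the two cases. The only differences are cosmetic: in case (i) you violate the equality $\langle\phi_{n+1},h\rangle=0$ (using that the open set $C$ cannot lie in a hyperplane), whereas the paper shifts $h$ along $\bigcap_i\ker(\alpha_i)$ to violate an $\omega_j$-inequality; in case (ii) you pass through entrywise nonnegativity of $\A^{-1}$ before invoking (Ind) on the indefinite block, whereas the paper applies (Ind) directly to $\A_k^T$ and the subvector of coordinates, thereby showing the slightly stronger fact that \emph{every} $h\in C$ has $0\notin\Xi_h$.
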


\begin{proof}
	If an element $g\in G$ is contained in $KU_+K$, then there exists some $k\in K$ such that $gk\in KU_+$ and hence $\Pi(gk)=1$. We are going to show that in either of the above cases, we can choose $g=a\in\exp(C)\subset A$ such that this equality is impossible. By Theorem \ref{thm:KMKostant}, it suffices to show that there exists some $h\in C$ such that $0\notin \conv(W\cdot h)=\Xi_h$. \\
	Suppose first that $d>n$, which is automatically assumed to be the case if $\A$ is not invertible, and take an arbitrary $h_1\in C$. Since $d>n$, the subspace $\mf{c}:=\bigcap_{i\in I} \op{ker}(\alpha_i)\subset\a$ is at least one-dimensional. Hence, there exists $h_2\in\mf{c}$ such that $h:=h_1+h_2$ satisfies $\langle\omega_j,h\rangle<0$ for at least one $j\in I$. In addition, we still have $\langle \alpha_i,h\rangle=\langle\alpha_i,h_1\rangle>0$ for all $i\in I$ and thus $h\in C$. Corollary \ref{cor:inequality1} therefore implies that $0\notin\Xi_h$. \\
	Now assume that $d=n$ and let $\A=\A_1\oplus\ldots\oplus\A_m$ be the decomposition of $\A$ into indecomposable submatrices. By assumption, there exists some $k\in\{1,\ldots,m\}$ such that $\A_k$ is not of finite type. The classification of GCM's (cf. Section \ref{KMAlgebras}) therefore implies that $\A_k$ is of affine or indefinite type, where the former is impossible since $d=n$ forces $\A$ to be invertible. Denote the subset of $I$ indexing $\A_k$ by $I_k$. For a vector $h=\sum_{j=1}^n \lambda_j\alpha_j^\vee\in\a'=\a$, the condition $\langle\alpha_i,h\rangle>0$ for all $i\in I$ is equivalent to $\A^T\lambda>0$, where $\lambda=(\lambda_1,\ldots,\lambda_n)^T\in\R^n$, and the obvious analogue of that assertion is true if one passes to subsets of $I$ indexing the corresponding indecomposable submatrices of $\A$. Let $\mu$ be the vector obtained from $\lambda$ by removing all coordinates in $I\setminus I_k$. Since $\A_k$ is of indefinite type, the same is true for $\A_k^T$ and the condition $\A_k^T\mu\geq 0$ with $\mu\geq 0$ can only be satisfied if $\mu=0$, so if $\A_k^T\mu>0$, then at least one coordinate of $\mu$ must be negative. Thus, if $h\in C$, then some coordinate of the subvector $\mu$ must be negative, i.e. $\lambda_j<0$ for at least one $j\in I_k$ or, equivalently, $\langle\omega_j,h\rangle<0$. As in the first case, it now follows that $0\notin\Xi_h$.
\end{proof}

\noindent Returning to the discussion at the beginning of this section, the next natural step would be to generalize Theorem \ref{thm:KMKostant} to singular points also in the ``proper'' Kac-Moody setting where $W$ is infinite. This might also lead to a generalization of our result to Kac-Moody groups whose root datum is not necessarily free and cofree, because such groups also possess an associated Tits cone, but its open fundamental chamber might be empty. As mentioned, simple approximation techniques as in the finite case are not directly applicable. We believe that it is a more promising approach to try to generalize all the results from Chapter \ref{ConvexHulls} to the singular case. The main difficulty is to understand how faces of $\Xi_h$ degenerate as $h$ moves towards certain walls of the fundamental chamber, which is closely connected to studying how the stabilizer of $h$ decomposes the Coxeter diagram of $(W,S)$. Contrary to the regular case, however, it does not seem easy to decide whether $\Xi_h$ is closed if $h$ is singular. Another interesting open problem would be to investigate whether an analogue of Theorem \ref{thm:KMKostant} is also true for external elements. This is certainly a necessary step in order to decide whether the set $\{\Pi(gk)\mid k\in K\}$ for arbitrary $g\in G$ has an explicit description as in the case of Lie groups.
	
\bibliography{KMKostantPaper_References.bib}
\bibliographystyle{alpha}

\Addresses
	
\end{document}